\titleformat*{\section}{\normalsize\bfseries}
\def\R{\mathbb{R}}
\def\p{\partial}
\theoremstyle{plain}
\newtheorem{thm}{Theorem}[section]
\newtheorem{prop}[thm]{Proposition}
\newtheorem{lemma}[thm]{Lemma}
\newtheorem{rem}[thm]{Remark}
\numberwithin{equation}{section}
\begin{document}

\title{
\vspace{-1cm}
\large{\bf Higher-order Asymptotic Profiles of Solutions to the Cauchy Problem for the Convection-Diffusion Equation with Variable Diffusion}}
\author{Ikki Fukuda and Shinya Sato}
\date{}
\maketitle

\vspace{-0.8cm}
\begin{center}
Dedicated to Professor Takayoshi Ogawa on the occasion of his 60th birthday
\end{center}

\footnote[0]{2020 Mathematics Subject Classification: 35B40, 35K15, 35K55.}

\vspace{-0.8cm}
\begin{abstract}
We consider the asymptotic behavior of solutions to the convection-diffusion equation: 
\[
\partial_t u -  \mathrm{div}\left(a(x)\nabla u\right) = d\cdot\nabla  \left(\left\lvert u\right\rvert ^{q-1}u\right),\ \  x\in\mathbb{R}^n, \ t>0
\]
with an integrable initial data $u_{0}(x)$, where $n\ge1$, $q>1+\frac{1}{n}$ and $d\in \R^{n}$. Moreover, we take $a(x)=1+b(x)>0$, where $b(x)$ is smooth and decays fast enough at spatial infinity. It is known that the asymptotic profile of the solution to this problem can be given by the heat kernel. Moreover, some higher-order asymptotic expansions of the solution have already been studied. In particular, the structures of the second asymptotic profiles strongly depend on the nonlinear exponent $q$. More precisely, these profiles have different decay orders in each of the following three cases: $1+\frac{1}{n}<q<1+\frac{2}{n}$; $q=1+\frac{2}{n}$; $q>1+\frac{2}{n}$. In this paper, we focus on the critical case $q=1+\frac{2}{n}$. By analyzing the corresponding integral equation in details, we have succeeded to give the more higher-order asymptotic expansion of the solution, which generalizes the previous works.  
\end{abstract}

\medskip
\noindent
{\bf Keywords:} Convection-diffusion equation; Variable diffusion; Higher-order asymptotic profiles.

\section{Introduction}

We consider the Cauchy problem for the convection-diffusion equation of the following form:
\begin{equation}\label{CD}
  \begin{aligned}
    &\partial_t u -  \mathrm{div}\left(a(x)\nabla u\right) = d\cdot\nabla  \left(\left\lvert u\right\rvert ^{q-1}u\right),\ \  x\in\mathbb{R}^n, \ t>0,\\
    &u(x,0)=u_0(x),\ \  x\in\mathbb{R}^n, 
\end{aligned}
\end{equation}
where $u=u(x, t)$ is a real-valued unknown function, $u_{0}(x)$ is a given initial data, $n\ge1$, $q>1$ and $d\in \R^{n}$. 
For the variable diffusion coefficient $a(x)$, we take asymptotically constant diffusion of the form $a(x)=1+b(x)>0$ with $b(x)$ satisfying $b\in L^{1}(\R^{n})\cap C^{1, \alpha}(\R^{n})$ ($0<\alpha <1$) and $\|b\|_{L^{\infty}}<1$ such that 
\begin{equation}\label{C-b}
|b(x)|+\left(1+|x|^{2}\right)^{\frac{1}{2}}\left|\nabla b(x)\right| \le C\left(1+|x|^{2}\right)^{-\frac{\delta}{2}}, \ \ x\in \R^{n}, 
\end{equation}
for some positive constants $C>0$ and $\delta>0$. 
More generally, our study is valid as long as the semigroup $T(t)$ generated by the corresponding linear diffusion equation 
\[
\p_{t}u- \mathrm{div}\left(a(x)\nabla u\right)=0, \ \  x\in\mathbb{R}^n, \ t>0
\] 
satisfies the following estimate: 
\[
\left\|\nabla T(t)(u_{0})\right\|_{L^{p}}\le C\left\|u_{0}\right\|_{L^{r}}t^{-\frac{n}{2}\left(\frac{1}{r}-\frac{1}{p}\right)-\frac{1}{2}}, \ \ t>0, \ 1\le r\le p\le \infty
\]
with some positive constant $C=C(p, r)>0$ independent of $a(x)$. Actually, if $b(x)$ satisfies \eqref{C-b} for some $C>0$ and $\delta>0$, then the above estimate holds (for details, see Murata~\cite{M85}). For the more classical theory of parabolic operators, we can also refer to \cite{A68-1, A68-2, F64}. 

This kind of equation arises from the study in the context of the diffusion of pollution in fluids (cf.~\cite{DDG96}). In this paper, we would like to analyze the asymptotic behavior of the solutions to \eqref{CD}. 
In particular, we are interested in how the variable diffusion and the nonlinearity affect the solution as $t\to \infty$.

First of all, let us recall some known results related to the Cauchy problem \eqref{CD}. 
In \cite{E-Z}, Escobedo--Zuazua studied \eqref{CD} when $a(x)\equiv1$ (i.e., $b(x)\equiv0$). 
They showed that if $q>1$ and $u_{0}\in L^{1}(\R^{n})$, then there exists a unique global solution $u \in C([0, \infty); L^1(\mathbb{R}^n))$ satisfying 
\begin{equation*}
      u \in C((0, \infty); W^{2, p}(\mathbb{R}^n))  \cap C^1((0, \infty); L^p(\mathbb{R}^n)), 
 \end{equation*}
 for any $p\in \left(1, \infty\right)$. Moreover, if we additionally assume $u_{0}\in L^{1}(\R^{n}) \cap L^\infty(\mathbb{R}^n)$, then the solution $u(x, t)$ also satisfies $u \in L^{\infty}(\mathbb{R}^n\times (0,\infty))$. 
 For the classical regularity theory of parabolic equations, let us refer to \cite{L68}. 
 Moreover, we note that the above results are also summarized very well, in the lecture note written by Zuazua \cite{Z1}. 

Next, let us introduce the additional properties of the solution $u(x, t)$ to \eqref{CD}. Now, integrating the equation \eqref{CD} over $\R^{n}$, we deduce that the mass is conserved, 
\[
\text{i.e.,} \ \ \int_{\R^{n}}u(x, t)dx=\int_{\R^{n}}u_{0}(x)dx, \ \ t>0. 
\]
On the other hand, the following $L^{p}$-decay estimates hold for any $p\in \left[1,\infty\right] $ (cf.~\cite{E-Z, Z1}): 
    \begin{align}
     & \left\lVert u(t) \right\rVert  _{L^p} \leq Ct^{-\frac{n}{2}\left(1-\frac{1}{p}\right)},\ \  t>0, \label{CDdecay} \\
      &\left\lVert \nabla u(t) \right\rVert  _{L^p} \leq Ct^{-\frac{n}{2}\left(1-\frac{1}{p}\right)-\frac{1}{2}},\ \  t>1. \label{CDdecay-2}
    \end{align}
Furthermore, Duro--Zuazua~\cite{DZ99} studied the more general cases of \eqref{CD} with the variable diffusion satisfying \eqref{C-b} and generalized the results given in \cite{E-Z}. Actually, the all properties mentioned above also can be established for our target problem \eqref{CD}. 
We note that the constants appear in \eqref{CDdecay} and \eqref{CDdecay-2} do not depend on $b(x)$, by virtue of \eqref{C-b}. 
Also, we emphasize that these decay rates are the same as for the solution to the corresponding linear problem. 

In addition to the decay estimates \eqref{CDdecay} and \eqref{CDdecay-2}, the more detailed asymptotic behavior of the solution has been studied in \cite{DZ99, E-Z}. 
The decay rates given in \eqref{CDdecay} and \eqref{CDdecay-2} do not depend on the nonlinear exponent $q$. 
However, the asymptotic behavior of the solution deeply depends on the nonlinearity. Actually, when $a(x)\equiv1$, the existence of a critical value of the exponent $q$. Roughly speaking, the following results have been shown in \cite{E-Z}:
\begin{itemize}

\item
If $q>1+\frac{1}{n}$, the solution behaves like a self-similar solution of the linear heat equation. 

\item
If $q=1+\frac{1}{n}$, the solution behaves like a self-similar solution of the full equation. 

\item
If $1<q<1+\frac{1}{n}$, the solution behaves like a self-similar solution of a reduced equation. 

\end{itemize} 
For the more rigorous statements, see \cite{E-Z}. In the present paper, we focus on the super critical case $q>1+\frac{1}{n}$. 
Under this situation, the asymptotic profile of the solution can be given by the heat kernel, even if the diffusion coefficient $a(x)$ is the more general case (cf.~\cite{DZ99}). 
That is, neither the effects of the variable diffusion, nor the nonlinearity appear in the first approximation. In order to state such a result, we set 
\begin{equation}\label{heat}
G(x, t) \coloneqq \frac{1}{(4\pi t)^{\frac{n}{2}}}e^{-\frac{|x|^{2}}{4t}}, \ \ M:=\int_{\R^{n}}u_{0}(x)dx. 
\end{equation}
Moreover, for $p\in [1, \infty)$, let us introduce the following weighted Lebesgue spaces:  
\[
L^{p}( \mathbb{R}^n; 1+|x| ):=\left\{f\in L^{p}(\R^{n}); \ \int_{\R^{n}}\left\{|f(x)|(1+|x|)\right\}^{p}dx<\infty\right\}. 
\]
Then, the following asymptotic formula has been established in \cite{DZ99}: 
\begin{prop}[\cite{DZ99}]\label{CDasympthm}
Let $n\ge1$ and $ q>1+\frac{1}{n} $. Suppose $ u_0 \in L^{1}( \mathbb{R}^n; 1+|x| )\cap L^{\infty }( \mathbb{R}^n )$. 
Then, for any $ p \in [1, \infty ] $ and $t\ge1$, the solution $ u(x,t) $ to \eqref{CD} satisfies the following asymptotic formula:
    \begin{equation}\label{CDasymp}
      \left\lVert u(t) - MG(t)\right\rVert  _{L^p} \le C
      \begin{cases}
        t^{-\frac{n}{2}\left(q-\frac{1}{p}\right)+\frac{1}{2}}, &\displaystyle 1 + \frac{1}{n} < q < 1 + \frac{2}{n}, \\[3mm]
        t^{-\frac{n}{2}\left(1-\frac{1}{p}\right) - \frac{1}{2}}\log (2+t), &\displaystyle q=1+\frac{2}{n},\\[3mm]
        t^{-\frac{n}{2}\left(1-\frac{1}{p}\right) - \frac{1}{2}}, &\displaystyle q > 1 + \frac{2}{n},
    \end{cases}
    \end{equation}
    where $G(x, t)$ and $M$ are defined by \eqref{heat}. 
    \end{prop}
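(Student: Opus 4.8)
The plan is to start from the Duhamel (variation-of-constants) representation of the solution. Writing $T(t)$ for the linear semigroup generated by $\p_{t}-\mathrm{div}(a(x)\nabla\,\cdot\,)$ and setting $f(u):=|u|^{q-1}u$, the mild solution satisfies
\[
u(t)=T(t)u_{0}+\int_{0}^{t}T(t-s)\bigl(d\cdot\nabla f(u(s))\bigr)\,ds.
\]
Since $\mathrm{div}(a(x)\nabla\,\cdot\,)$ is self-adjoint, the convective term enjoys the same smoothing as a gradient acting on the semigroup, so that the bound stated in the introduction gives $\|T(\tau)(d\cdot\nabla g)\|_{L^{p}}\le C\tau^{-\frac n2(\frac1r-\frac1p)-\frac12}\|g\|_{L^{r}}$. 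I would then split the quantity to be estimated as
\[
u(t)-MG(t)=\bigl(T(t)u_{0}-MG(t)\bigr)+\int_{0}^{t}T(t-s)\bigl(d\cdot\nabla f(u(s))\bigr)\,ds=:I(t)+II(t),
\]
and show that in each of the three regimes the nonlinear term $II(t)$ is the dominant contribution while the linear error $I(t)$ is always faster.

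For the linear part, the goal is the bound $\|I(t)\|_{L^{p}}\le Ct^{-\frac n2(1-\frac1p)-\frac12}$, which is at least as fast as the right-hand side of \eqref{CDasymp} in every case. This relies on the first-order asymptotics of $T(t)$: because $b(x)$ satisfies \eqref{C-b}, the semigroup $T(t)$ shares the leading heat-kernel profile, and the first-moment information carried by $u_{0}\in L^{1}(\R^{n};1+|x|)$ upgrades the naive $O(t^{-\frac n2(1-\frac1p)})$ bound to the extra $-\tfrac12$ gain. Concretely I would compare $T(t)u_{0}$ with $e^{t\Delta}u_{0}$, controlling the difference through the decay rate $\delta$ of $b$, and then invoke the classical moment expansion $\|e^{t\Delta}u_{0}-MG(t)\|_{L^{p}}\le Ct^{-\frac n2(1-\frac1p)-\frac12}\|u_{0}\|_{L^{1}(\R^{n};1+|x|)}$.

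The heart of the matter is $II(t)$. Here I would combine the smoothing estimate above with the decay estimate \eqref{CDdecay}, which yields $\|f(u(s))\|_{L^{r}}=\|u(s)\|_{L^{qr}}^{q}\le Cs^{-\frac{nq}{2}(1-\frac1{qr})}$ for $s\ge1$, together with a uniform bound for $s\le1$ coming from $u_{0}\in L^{1}\cap L^{\infty}$. Splitting $\int_{0}^{t}=\int_{0}^{t/2}+\int_{t/2}^{t}$ and choosing the auxiliary exponent $r$ on each piece (close to $1$ on $[0,t/2]$ and close to $p$ on $[t/2,t]$, so that every resulting time integral converges even at the endpoint $p=\infty$), the whole estimate reduces to the large-time behavior of $\int_{1}^{t/2}s^{-\frac n2(q-1)}\,ds$, governed by the sign of $\frac n2(q-1)-1$. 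This is precisely where the trichotomy appears: the integral grows like $t^{1-\frac n2(q-1)}$ when $\frac n2(q-1)<1$, which after reinserting the factored-out $(t-s)^{-\frac n2(1-\frac1p)-\frac12}\sim t^{-\frac n2(1-\frac1p)-\frac12}$ recombines into the rate $t^{-\frac n2(q-\frac1p)+\frac12}$; it grows like $\log t$ when $\frac n2(q-1)=1$; and it stays bounded when $\frac n2(q-1)>1$, leaving the rate $t^{-\frac n2(1-\frac1p)-\frac12}$.

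I expect the main obstacle to be the bookkeeping around the critical case together with the endpoint $p=\infty$: the exponent $r$ must be chosen so that the singularity $(t-s)^{-\frac n2(\frac1r-\frac1p)-\frac12}$ stays integrable near $s=t$ while the source decay $s^{-\frac{nq}{2}(1-\frac1{qr})}$ stays integrable near $s=0$, and reconciling these two constraints uniformly in $p$ (then extracting exactly the logarithmic factor at $q=1+\frac2n$) is the delicate point. The small-time contribution $s\in[0,1]$, where the decay estimates degenerate, must be absorbed separately using the boundedness of $u$ furnished by $u_{0}\in L^{1}\cap L^{\infty}$.
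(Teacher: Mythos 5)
First, a remark on the comparison itself: the paper does not prove Proposition~\ref{CDasympthm} at all --- it is quoted from Duro--Zuazua \cite{DZ99} --- so your proposal must stand as a self-contained argument. Its nonlinear half is essentially correct and is the standard mechanism: the dual smoothing estimate $\left\|T(\tau)(d\cdot\nabla g)\right\|_{L^{p}}\le C\tau^{-\frac{n}{2}\left(\frac{1}{r}-\frac{1}{p}\right)-\frac{1}{2}}\|g\|_{L^{r}}$ is legitimate (self-adjointness of $\mathrm{div}(a\nabla\cdot)$ plus the gradient bound quoted in the introduction), and your splitting $\int_{0}^{1}+\int_{1}^{t/2}+\int_{t/2}^{t}$ with $r=1$ on the middle piece reduces matters to $\int_{1}^{t/2}\tau^{-\frac{n}{2}(q-1)}d\tau$, which is exactly the integral \eqref{int} that the paper identifies as the source of the trichotomy; the piece on $[t/2,t]$ contributes $t^{-\frac{n}{2}\left(q-\frac{1}{p}\right)+\frac{1}{2}}$, which is dominated by the stated rates in all three regimes.

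The genuine gap is in the linear part. You assert $\left\|T(t)u_{0}-MG(t)\right\|_{L^{p}}\le Ct^{-\frac{n}{2}\left(1-\frac{1}{p}\right)-\frac{1}{2}}$ ``in every case'' and propose to obtain it by comparing $T(t)u_{0}$ with $e^{t\Delta}u_{0}$, ``controlling the difference through the decay rate $\delta$ of $b$.'' That comparison is not a routine step; it is the main content of the cited reference (building on Murata \cite{M85}). Concretely, the natural Duhamel comparison $T(t)u_{0}-e^{t\Delta}u_{0}=\int_{0}^{t}e^{(t-s)\Delta}\,\mathrm{div}\!\left(b\nabla T(s)u_{0}\right)ds$ leads, on $[0,t/2]$, to the factor $\int_{0}^{t/2}\left\|b\nabla T(s)u_{0}\right\|_{L^{1}}ds$, and the available bounds $\left\|b\nabla T(s)u_{0}\right\|_{L^{1}}\le C\min\left\{s^{-\frac{1}{2}},\,s^{-\frac{n}{2}-\frac{1}{2}}\right\}$ make this integral bounded only for $n\ge2$; for $n=1$ it grows like $\log t$ --- precisely the phenomenon recorded in \eqref{N-finite} and Remark~\ref{remark1}, and the reason the second-order results in this paper must distinguish $n\ge2$ from $n=1$ and track the size of $\delta$ (cf.\ Proposition~\ref{prop-new-h}). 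So, as written, your argument establishes the proposition only for $n\ge2$ (and even there the region $s\in[t/2,t]$, $p>n$, needs a finer choice of exponents than you indicate); for $n=1$ it produces an extra factor $\log(2+t)$ in the case $q>1+\frac{2}{n}$, i.e.\ it falls short of the statement rather than proving it. Removing that logarithm requires exploiting the cancellation $\int_{\R}b\,\partial_{x}T(s)u_{0}\,dx=-\int_{\R}\partial_{x}b\,T(s)u_{0}\,dx$ together with quantitative use of the pointwise decay \eqref{C-b} of $b$ and $\nabla b$ (an argument of the type carried out in Propositions~\ref{prop2} and \ref{prop-new-h} of this paper), i.e.\ the refined linear theory of \cite{DZ99}; your sketch presupposes this hard half of the proposition instead of supplying it.
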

As we can see from the above result, the asymptotic rates given in \eqref{CDasymp} deeply depend on the exponent $q$. 
In addition, the following three cases are distinguished: 
\[1+\frac{1}{n}<q<1+\frac{2}{n}; \ \ q=1+\frac{2}{n}; \ \ q>1+\frac{2}{n}.\] 
Roughly speaking, we note that the above critical exponent $q=1+\frac{2}{n}$ appears due to the integrability of the following integral:
\begin{equation}\label{int}
  \int_{1}^{t} \int_{\mathbb{R}^n}  \left\lvert u(x,\tau)\right\rvert^q \,dx  \,d\tau = \int_{1}^{t} \left\lVert u(\tau)\right\rVert _{L^q}^{q} \,d\tau \leq C\int_{1}^{t} \tau^{-\frac{n}{2}\left(q-1\right)} \,d\tau, 
\end{equation}
where we used the decay estimate \eqref{CDdecay}. Based on the above result, we can expect that the nonlinearity strongly affects the second asymptotic profiles of the solution. Actually, such results have already been obtained in \cite{D-C, Z2}. First, Zuazua~\cite{Z2} succeeded to derive the three kinds of second asymptotic profiles of the solution $u(x, t)$ depending on $q$, when $a(x)\equiv 1$. After that, these results were generalized by Duro--Carpio~\cite{D-C}, including the variable diffusion cases. From such results, in view of the second asymptotic profiles, we can see that the asymptotic rates to $MG(x, t)$ given in \eqref{CDasymp} are optimal, with respect to the time decaying order. On the other hand, the effect of the perturbation $b(x)$ becomes large in lower dimensions, especially in the one-dimensional case $n=1$. 
Actually, in \cite{D-C}, they proved that the variable diffusion appears in the second asymptotic profiles only when $q>1+\frac{2}{n}$ or when $n=1$ and $q=3$. 
More precisely, the following three propositions have been shown in \cite{D-C}: 
\begin{prop}[\cite{D-C}]\label{prop.DC-sub}
Let $n\ge1$ and $ 1+\frac{1}{n}<q<1+\frac{2}{n} $. Suppose $ u_0 \in L^{1}( \mathbb{R}^n; 1+|x| )\cap L^{n(q-1)}( \mathbb{R}^n)$. 
Then, for any $ p \in [1, \infty ] $, the solution $ u(x, t) $ to \eqref{CD} satisfies the following asymptotic formula:
\begin{equation}\label{DC-sub}
        \lim_{t\to \infty}t^{\frac{n}{2}\left(q-\frac{1}{p}\right)-\frac{1}{2}}\left\lVert u(t)-MG(t)-\left|M\right|^{q-1}MZ(t)\right\rVert _{L^p}=0, 
\end{equation}
where $G(x, t)$ and $M$ are defined by \eqref{heat}, while $Z(x, t)$ is the solution to the following problem: 
\begin{align*}
&\p_{t}Z-\Delta Z=d\cdot \nabla\left(G^{q}\right), \ \ x\in \R^{n}, \ t>0, \\
&Z(x, 0)=0, \ \ x\in \R^{n}. 
\end{align*}
    \end{prop}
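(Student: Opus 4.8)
The plan is to pass to the Duhamel formulation and to compare the solution term by term with the integral representation of the candidate second profile. Writing $T(t)=e^{tA}$ for the ($L^{2}$-self-adjoint) semigroup generated by $A=\mathrm{div}(a\nabla\,\cdot\,)=\Delta+\mathrm{div}(b\nabla\,\cdot\,)$, the solution satisfies
\[
u(t)=T(t)u_{0}+\int_{0}^{t}T(t-\tau)\bigl[d\cdot\nabla\bigl(|u|^{q-1}u\bigr)\bigr](\tau)\,d\tau,
\]
and, since $q<1+\frac{2}{n}$ makes $\frac{n}{2}(q-1)<1$ and hence the defining integral absolutely convergent near $\tau=0$, the profile is $Z(t)=\int_{0}^{t}e^{(t-\tau)\Delta}[d\cdot\nabla(G^{q})](\tau)\,d\tau$. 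Using $|M|^{q-1}MG^{q}=|MG|^{q-1}MG$ I would then split
\[
u(t)-MG(t)-|M|^{q-1}MZ(t)=L(t)+N_{1}(t)+N_{2}(t),
\]
with $L(t)=T(t)u_{0}-MG(t)$, with the nonlinear remainder $N_{1}(t)=\int_{0}^{t}T(t-\tau)[d\cdot\nabla(|u|^{q-1}u-|MG|^{q-1}MG)](\tau)\,d\tau$, and with the semigroup-difference term $N_{2}(t)=|M|^{q-1}M\int_{0}^{t}[T(t-\tau)-e^{(t-\tau)\Delta}][d\cdot\nabla(G^{q})](\tau)\,d\tau$. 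The entire strategy is to show that each of $L,N_{1},N_{2}$ decays in $L^{p}$ by a strictly larger power of $t$ than the target rate $t^{-\frac{n}{2}(q-\frac1p)+\frac12}$; since these improvements are genuine powers, multiplying by $t^{\frac{n}{2}(q-\frac1p)-\frac12}$ and letting $t\to\infty$ yields \eqref{DC-sub} with no further limiting argument.

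For the genuinely nonlinear term $N_{1}$, which is the one that reproduces $Z$, the key observation is that replacing $u$ by $MG$ inside the nonlinearity costs a pure time factor. Indeed, comparing the decay \eqref{CDdecay} of $\|MG(\tau)\|_{L^{s}}\sim\tau^{-\frac{n}{2}(1-\frac1s)}$ with the first-order estimate \eqref{CDasymp} of $\|u(\tau)-MG(\tau)\|_{L^{s}}\lesssim\tau^{-\frac{n}{2}(q-\frac1s)+\frac12}$ gives the ratio $\tau^{-(\frac{n}{2}(q-1)-\frac12)}$, which is independent of the Lebesgue exponent $s$. Quantitatively I would use $\bigl||u|^{q-1}u-|MG|^{q-1}MG\bigr|\le C(|u|^{q-1}+|MG|^{q-1})|u-MG|$, bound the $L^{1}$ norm by H\"older as $\|u\|_{L^{q}}^{q-1}\|u-MG\|_{L^{q}}$ (and the analogue with $MG$), and insert \eqref{CDdecay} and \eqref{CDasymp} from Proposition~\ref{CDasympthm} to obtain $\|\,|u|^{q-1}u-|MG|^{q-1}MG\,\|_{L^{1}}\lesssim\tau^{-n(q-1)+\frac12}$ for $\tau\ge1$. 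Combining this with the gradient-smoothing estimate and its dual, $\|T(s)[d\cdot\nabla f]\|_{L^{p}}\le Cs^{-\frac{n}{2}(\frac1r-\frac1p)-\frac12}\|f\|_{L^{r}}$ (legitimate because $T(s)$ is self-adjoint), and evaluating the time convolution by a Beta-function estimate, one finds that $N_{1}$ beats the target rate by a factor $t^{-(\frac{n}{2}(q-1)-\frac12)}$, whose exponent is positive precisely because $q>1+\frac1n$. Thus the lower endpoint of the subcritical window governs $N_{1}$.

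The main obstacle is the pair of linear pieces $L$ and $N_{2}$, which between them carry the entire influence of the variable diffusion $b$. For $L$ I would invoke the first-order linear asymptotics—valid under $u_{0}\in L^{1}(\R^{n};1+|x|)$ and \eqref{C-b}—namely $\|T(t)u_{0}-MG(t)\|_{L^{p}}\le Ct^{-\frac{n}{2}(1-\frac1p)-\frac12}$, whose exponent lies strictly below the target exactly when $q<1+\frac2n$, so that here the upper endpoint of the subcritical window is decisive. The harder term is $N_{2}$, for which I would establish a quantitative comparison of the two semigroups via the perturbative Duhamel identity
\[
T(s)-e^{s\Delta}=\int_{0}^{s}e^{(s-\sigma)\Delta}\,\mathrm{div}\bigl(b\,\nabla T(\sigma)\,\cdot\,\bigr)\,d\sigma,
\]
and then combine the gradient smoothing of $e^{(s-\sigma)\Delta}$ and $T(\sigma)$ with the spatial decay \eqref{C-b} of $b$ to reach an estimate $\|(T(s)-e^{s\Delta})g\|_{L^{p}}\le Cs^{-\frac{n}{2}(\frac1r-\frac1p)-\frac12-\gamma}\|g\|_{L^{r}}$ with a positive gain $\gamma=\gamma(\delta)$. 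Inserting $g=d\cdot\nabla(G^{q})$, with $\|G^{q}(\tau)\|_{L^{r}}\sim\tau^{-\frac{n}{2}(q-\frac1r)}$, into the time integral, the extra factor $s^{-\gamma}$ makes $N_{2}$ smaller than the profile $Z$ itself and hence strictly below the target; here it is the decay exponent $\delta>0$, rather than the value of $q$, that is decisive. Making this gain precise—quantifying how the spatial decay of $b$ propagates through the two-parameter integral above—is the delicate technical heart of the argument, and being a purely linear statement it can be handled separately from the nonlinearity.

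Two routine points remain. First, for $t\ge1$ parabolic smoothing makes $u(\cdot,t)$ bounded, so the decay \eqref{CDdecay} and the first-order estimate \eqref{CDasymp} apply on $[1,t]$; the contribution of $\tau\in[0,1]$ to $N_{1}$ is treated separately, where $t-\tau\sim t$ reduces the semigroup factor to $t^{-\frac{n}{2}(1-\frac1p)-\frac12}$ and the integrability of $\int_{0}^{1}\|u(\tau)\|_{L^{q}}^{q}\,d\tau$ is exactly what the hypothesis $u_{0}\in L^{n(q-1)}(\R^{n})$ secures (by interpolation with $L^{1}$, or by the smoothing of $T(\tau)$ when $n(q-1)<q$). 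Second, for the passage to general $p\in[1,\infty]$ the auxiliary exponent $r$ in the convolution must be chosen as a function of $p$ so that the temporal kernel stays integrable near $\tau=t$, and when an exponent reaches the value $1$ the Beta scaling is replaced by a split at $\tau=t/2$; in every case the endpoint $\tau\sim t$ reproduces the harmless rate $t^{-\frac{n}{2}(1-\frac1p)-\frac12}$, so the conclusion is unchanged and the gain $\frac{n}{2}(q-1)-\frac12$ remains independent of $p$.
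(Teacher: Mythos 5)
This proposition is one the paper itself does not prove: it is quoted from Duro--Carpio \cite{D-C}, and the paper's related machinery (the proofs of Propositions~\ref{prop1} and \ref{prop2}, and the treatment of the variable-diffusion term) works from the \emph{heat-kernel} integral equation \eqref{IE}, in which $\mathrm{div}(b\nabla u)$ is treated as a source term, never from the perturbed semigroup $T(t)$. Measured against that, your decomposition $L+N_{1}+N_{2}$ is algebraically correct, and two of the three pieces are sound: the estimate of $N_{1}$ (H\"older on $|u|^{q-1}u-|MG|^{q-1}MG$, Proposition~\ref{CDasympthm}, the dual gradient bound for the self-adjoint $T(s)$, and the beta-type time integral giving the gain $t^{-(\frac{n}{2}(q-1)-\frac12)}$) is essentially the same kind of argument the paper uses for $w(x,t)$ in Proposition~\ref{prop2}, and your identification of the role of $u_{0}\in L^{n(q-1)}$ on $\tau\in[0,1]$ is right (modulo the unaddressed but standard persistence of the first moment needed to restart Proposition~\ref{CDasympthm} at $t=1$, and a harmless $\log t$ loss in the bound for $L(t)$ when $n=1$).

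The genuine gap is $N_{2}$. The estimate you hinge it on, $\|(T(s)-e^{s\Delta})g\|_{L^{p}}\le Cs^{-\frac{n}{2}(\frac1r-\frac1p)-\frac12-\gamma}\|g\|_{L^{r}}$ with $\gamma>0$, is not only unproven but cannot be extracted from the sketched one-step Duhamel identity at the exponents you actually need. Writing the integrand as $e^{(s-\sigma)\Delta}\mathrm{div}(b\,\nabla T(\sigma)g)$ and applying H\"older with $b\in L^{\rho}$, integrability of the time singularities at $\sigma\sim0$ and $\sigma\sim s$ forces (for $r=1$, $p=\infty$, $n\ge2$) $\rho>n$, so the resulting bound is $s^{-\frac{n}{2}(\frac1r-\frac1p)-\frac{n}{2\rho}}$ with $\frac{n}{2\rho}<\frac12$, i.e.\ strictly weaker than even the $\gamma=0$ case of your claim. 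Worse, for $n=1$ no power gain $\gamma>0$ exists for general $L^{1}$ data: the best structure-blind bound carries a logarithm (this is exactly the mechanism behind the divergence of $\mathcal{N}$ in Remark~\ref{remark1} and the $K(t)(\log t)\p_{x}G$ term in Proposition~\ref{prop.DC-critical}), and feeding a bound of the form $s^{-\frac{n}{2}(\frac1r-\frac1p)-\frac12}\log s$ into your $\tau$-integral yields $t^{-\frac{n}{2}(q-\frac1p)+\frac12}\log t$, which fails to beat the target. Rescuing $N_{2}$ would require exploiting that $d\cdot\nabla(G^{q}(\tau))$ is a divergence with zero mass, i.e.\ second-order asymptotics of the perturbed semigroup on zero-mass data -- precisely the hard analysis you defer. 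The clean repair is the route of \cite{D-C} and of this paper: work with \eqref{IE}, so that the entire $b$-dependence sits in $D_{2}(x,t)=\sum_{i}\int_{0}^{t}(\p_{x_{i}}G(t-\tau)*b\,\p_{x_{i}}u(\tau))\,dx\,d\tau$, and bound it by $\|b\p_{x_{i}}u(\tau)\|_{L^{1}}\le\min\{\|b\|_{L^{\infty}}\|\nabla u(\tau)\|_{L^{1}},\|b\|_{L^{1}}\|\nabla u(\tau)\|_{L^{\infty}}\}$ together with Lemma~\ref{lem.nu-est-Lp}; this gives $\|D_{2}(t)\|_{L^{p}}=O(t^{-\frac{n}{2}(1-\frac1p)-\frac12})$ for $n\ge2$ (with an extra $\log t$ for $n=1$), which is $o(t^{-\frac{n}{2}(q-\frac1p)+\frac12})$ precisely because $1-\frac{n}{2}(q-1)>0$ in the subcritical range, with no semigroup comparison needed.
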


\begin{prop}[\cite{D-C}]\label{prop.DC-critical}
Let $q=1+\frac{2}{n} $. Then, for any $ p \in [1, \infty ] $, the solution $ u(x, t) $ to \eqref{CD} satisfies the following asymptotic formulas:

\smallskip
\noindent
{\rm (i)} If $n\ge2$ and $ u_0 \in L^{1}( \mathbb{R}^n; 1+|x| )\cap L^{q}( \mathbb{R}^n; 1+|x|)\cap L^{2}( \mathbb{R}^n )$, then 
    \begin{equation}\label{Z-2nd}
        \lim_{t\to \infty}\frac{t^{\frac{n}{2}\left(1-\frac{1}{p}\right)+\frac{1}{2}}}{\log t}\left\lVert u(t)-MG(t)-\alpha_n\left\lvert M\right\rvert ^{\frac{2}{n}}M(\log t)d\cdot\nabla G(t)\right\rVert _{L^p}=0.
    \end{equation}
    \noindent
{\rm (ii)} If $n=1$ and $ u_0 \in L^{1}( \mathbb{R}; 1+|x| )\cap L^{3}( \mathbb{R}; 1+|x|)$, then 
    \begin{equation}\label{Z-2nd-2}
        \lim_{t\to \infty}\frac{t^{\frac{1}{2}\left(1-\frac{1}{p}\right)+\frac{1}{2}}}{\log t}\left\lVert u(t)-MG(t)-\left(K(t)+\alpha_1dM^{3}\right)(\log t)\p_{x}G(t)\right\rVert _{L^p}=0. 
    \end{equation}
    Here, $G(x, t)$ and $M$ are defined by \eqref{heat}, while $\alpha_{n}$ and $K(t)$ are defined by 
    \begin{equation}\label{alpha_n}
    \alpha_{n}:=\frac{1}{4\pi}\left(1+\frac{2}{n}\right)^{-\frac{n}{2}}, \ \ K(t):=\frac{1}{\log t}\int_{0}^{t}\int_{\R}b(y)\p_{x}u(y, \tau)dyd\tau. 
    \end{equation}
    \end{prop}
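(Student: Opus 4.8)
The plan is to analyze the Duhamel (integral) formulation of \eqref{CD} written with respect to the constant-coefficient heat semigroup $e^{t\Delta}$, so that both the perturbation $b(x)$ and the nonlinearity appear as explicit source terms. Writing $a(x)=1+b(x)$, the equation becomes $\p_t u-\Delta u=\mathrm{div}(b\nabla u)+d\cdot\nabla(|u|^{q-1}u)$, hence
\begin{equation*}
u(t)=e^{t\Delta}u_0+\int_0^t e^{(t-\tau)\Delta}\,\mathrm{div}\!\left(b\nabla u(\tau)\right)d\tau+\int_0^t e^{(t-\tau)\Delta}\,d\cdot\nabla\!\left(|u|^{q-1}u(\tau)\right)d\tau.
\end{equation*}
I would isolate the single arithmetic fact that drives the critical case: at $q=1+\frac{2}{n}$ one has $n(q-1)=2$, so that $G^q(\cdot,\tau)=\frac{q^{-n/2}}{4\pi\tau}\,G(\cdot,\tau/q)$ and therefore $\int_{\R^n}G^q(y,\tau)\,dy=\frac{q^{-n/2}}{4\pi\tau}$. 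The non-integrability of $\tau^{-1}$ over $[1,t]$ is exactly the origin of the $\log t$ factor, which is why the second profile is only logarithmically larger than the first-order remainder in Proposition \ref{CDasympthm}.

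For the nonlinear integral I would first replace $|u|^{q-1}u$ by $|M|^{q-1}MG^q$, invoking the first-order expansion $u\approx MG$ of Proposition \ref{CDasympthm}. In the resulting main term I use $e^{(t-\tau)\Delta}\!\left(d\cdot\nabla G^q(\tau)\right)=d\cdot\nabla\!\left[e^{(t-\tau)\Delta}G^q(\tau)\right]$ together with the zeroth-moment expansion $e^{(t-\tau)\Delta}G^q(\tau)\approx\big(\int G^q(\tau)\big)G(\cdot,t-\tau)$ — the first moment vanishes by the evenness of $G^q$ — to obtain
\begin{equation*}
\int_0^t e^{(t-\tau)\Delta}\,d\cdot\nabla G^q(\tau)\,d\tau\approx\frac{q^{-n/2}}{4\pi}\left(\int_1^t\frac{d\tau}{\tau}\right)d\cdot\nabla G(\cdot,t)=\alpha_n(\log t)\,d\cdot\nabla G(t),
\end{equation*}
after replacing $\nabla G(\cdot,t-\tau)$ by $\nabla G(\cdot,t)$ on the bulk $1\le\tau\le t/2$ (the ranges $\tau\le1$ and $\tau\ge t/2$ contribute only $O(1)\cdot\nabla G$, using actual bounds on $\|u\|_{L^q}^q$ near $\tau=0$). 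Multiplying by $|M|^{q-1}M=|M|^{2/n}M$ produces precisely the profile $\alpha_n|M|^{2/n}M(\log t)\,d\cdot\nabla G(t)$ of case (i).

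The role of the variable diffusion is governed by the same scaling heuristic applied to the $b$-term $\int_0^t e^{(t-\tau)\Delta}\mathrm{div}(b\nabla u)\,d\tau$. Its inner mass, the vector $\int_{\R^n} b\nabla u\,dy$, is controlled by $\|b\|_{L^1}\|\nabla u(\tau)\|_{L^\infty}\le C\tau^{-\frac n2-\frac12}$ via \eqref{CDdecay-2}; for $n\ge2$ this is integrable in time, so the whole $b$-term is $O(1)\cdot\nabla G=o\big(t^{-\frac n2(1-1/p)-\frac12}\log t\big)$ and disappears from the profile, giving case (i). For $n=1$ the decay is only $\tau^{-1}$, exactly borderline, and the same mass expansion yields $\big(\int_0^t\!\int b\,\p_x u\,dy\,d\tau\big)\p_x G(\cdot,t)=(\log t)K(t)\,\p_x G(t)$, which combines with the nonlinear contribution $\alpha_1 dM^3(\log t)\p_x G(t)$ to give case (ii); here $e^{t\Delta}u_0-MG(t)$ and the first-moment corrections are again lower order under $u_0\in L^1(\R;1+|x|)\cap L^3(\R;1+|x|)$.

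The main obstacle is the error control in the nonlinear term, because the target itself is only of size $\log t$ and every competing contribution is borderline logarithmic. Writing $\big||u|^{q-1}u-|M|^{q-1}MG^q\big|\le C(|u|^{q-1}+G^{q-1})|u-MG|$ and estimating its $L^1_x$-norm by Hölder's inequality as $\big(\|u(\tau)\|_{L^q}^{q-1}+\|G(\tau)\|_{L^q}^{q-1}\big)\|u(\tau)-MG(\tau)\|_{L^q}$, I would need the time integral of this quantity (weighted by the semigroup factor $(t-\tau)^{-\frac n2(1-\frac1p)-\frac12}$) to be $o(\log t)$ rather than $O(\log t)$ relative to the leading term. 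This is precisely where the weighted hypotheses ($L^q(\R^n;1+|x|)$ and $L^2$ in case (i), $L^3(\R;1+|x|)$ in case (ii)) enter: they upgrade the first-order remainder of Proposition \ref{CDasympthm} to a strictly faster, non-logarithmic decay in the relevant norms, so that the nonlinear error, the semigroup moment-expansion errors, and the replacement $\nabla G(\cdot,t-\tau)\to\nabla G(\cdot,t)$ all sum to $o(\log t)$. Justifying these borderline estimates — in particular ruling out a spurious extra factor of $\log t$ produced by the critical decay $\int G^q\sim\tau^{-1}$ — is the technical heart of the argument.
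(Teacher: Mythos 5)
Your overall route coincides with the one the paper (following \cite{D-C}) uses: the integral equation \eqref{IE} for the constant-coefficient heat semigroup, the identity $\int_{\R^n}G^q(y,\tau)\,dy=\alpha_n\tau^{-1}$ at $q=1+\frac{2}{n}$ as the common source of $\alpha_n$ and $\log t$, the replacement of $|u|^{q-1}u$ by $|MG|^{q-1}MG$, and the dichotomy for the $b$-term ($n\ge2$ versus $n=1$). Two corrections on the parts you flag as delicate. First, the nonlinear error is not where the difficulty sits, and no ``upgrade'' of Proposition~\ref{CDasympthm} via the weighted hypotheses is needed: since $\|u(\tau)\|_{L^\infty}^{2/n}+\|MG(\tau)\|_{L^\infty}^{2/n}\le C\tau^{-1}$, the difference $\rho=|u|^{2/n}u-|MG|^{2/n}MG$ satisfies $\|\rho(\tau)\|_{L^p}\le C\tau^{-\frac{n}{2}(1-\frac{1}{p})-\frac{3}{2}}\log(2+\tau)$ (this is exactly Lemma~\ref{lemma}), and the extra factor $\tau^{-1}$ makes every Duhamel contribution of $\rho$ of size $O\bigl(t^{-\frac{n}{2}(1-\frac{1}{p})-\frac{1}{2}}\bigr)$, which is already $o\bigl(t^{-\frac{n}{2}(1-\frac{1}{p})-\frac{1}{2}}\log t\bigr)$, logarithm and all. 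Second, for $n\ge2$ your conclusion about the $b$-term is true, but your justification (integrability of the mass $\int_{\R^n} b\nabla u\,dy$) only controls $\tau\le t/2$; on $\tau\in[t/2,t]$ one must interpolate between the two Young bounds $\|\nabla G(t-\tau)\|_{L^1}\|b\nabla u(\tau)\|_{L^p}$ and $\|\nabla G(t-\tau)\|_{L^p}\|b\nabla u(\tau)\|_{L^1}$, using $\|b\nabla u(\tau)\|_{L^1}+\|b\nabla u(\tau)\|_{L^p}\le C\tau^{-\frac{n}{2}-\frac{1}{2}}$; the resulting integral $\int_0^{t/2}\min\bigl(\sigma^{-\frac{1}{2}},\sigma^{-\frac{n}{2}(1-\frac{1}{p})-\frac{1}{2}}\bigr)\,d\sigma$ is harmless precisely because $n\ge2$ --- the same dichotomy as in Remark~\ref{remark1}. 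So case (i) closes with standard (if not quite one-line) estimates.

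The genuine gap is case (ii). For $n=1$ the same tail computation gives the $[t/2,t]$ portion of the $b$-term only the bound $O\bigl(t^{-\frac{1}{2}(1-\frac{1}{p})-\frac{1}{2}}\log t\bigr)$ --- the \emph{same} size as the profile you are subtracting --- so ``the same mass expansion'' cannot yield the claimed $o\bigl(t^{-\frac{1}{2}(1-\frac{1}{p})-\frac{1}{2}}\log t\bigr)$ error. Concretely, at $p=\infty$ the naive kernel-replacement estimate of $D_2-K(t)(\log t)\p_xG$ gives $O(t^{-1/2})$ (or $O(t^{-3/4})$ if one also exploits first moments of the source), against a target of $o(t^{-1}\log t)$. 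What is missing is the mechanism of Proposition~\ref{prop-new-h}: write $b\p_xu(\tau)=\bigl(\int_\R b\p_xu\,dz\bigr)G(\tau)+\phi(\tau)$ with the zero-mean function $\phi$ of \eqref{DEF-phi}, realize the logarithmic profile as the Duhamel integral of the source $\bigl(\int_\R b(z)\p_xu(z,\tau)\,dz\bigr)\p_xG(\tau)$, and then pass to self-similar variables, where the rescaled coefficient $b(\sqrt{t}\,y)$ is small of order $t^{-\delta/2}|y|^{-\delta}$ off $y=0$ by \eqref{C-b}; it is this rescaling gain (and hence the exponent $\delta$), not the mass expansion, that beats the logarithm on the late-time region. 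You would also need the uniform bound $|K(t)|\le C$, i.e.\ the computation \eqref{N-finite}, for the statement of (ii) to be meaningful. As sketched, your argument proves (i) but does not close (ii).
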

\begin{rem}
In the above proposition, we must distinguish the cases $n\ge2$ and $n=1$, due to the integrability for the integral $\int_{1}^{\infty}\int_{\R^{n}}\left|b(y)\nabla u(y, \tau)\right|dyd\tau$. For details, see Remark~\ref{remark1} below. However, in the case of $n=1$, if we further assume that the initial data can be expressed as $u_{0}(x)=\p_{x}v_{0}(x)$ with some $v_{0}\in L^{1}(\R)$, then we have the following fact (for the proof, see {\rm \cite{D-C}}): 
\[
\lim_{t\to \infty}K(t)=\lim_{t\to \infty}\left\{\frac{1}{\log t}\int_{0}^{t}\int_{\R}b(y)\p_{x}u(y, \tau)dyd\tau\right\}=0. 
\]
Therefore, under this situation, we can see that \eqref{Z-2nd} holds even if the case of $n=1$. 
\end{rem}

\begin{prop}[\cite{D-C}]\label{prop.DC-super}
Let $q>1+\frac{2}{n} $. Then, for any $ p \in [1, \infty ] $, the solution $ u(x, t) $ to \eqref{CD} satisfies the following assertions:

\smallskip
\noindent
{\rm (i)} If $n\ge2$ and $ u_0 \in L^{1}( \mathbb{R}^n; 1+|x| )\cap L^{n(q-1)}( \mathbb{R}^n)$, then 
    \begin{equation}\label{DC-1}
        \lim_{t\to \infty}t^{\frac{n}{2}\left(1-\frac{1}{p}\right)+\frac{1}{2}}\left\lVert u(t)-MG(t)-\beta \cdot\nabla G(t)\right\rVert _{L^p}=0, 
    \end{equation}
    where $G(x, t)$ and $M$ are defined by \eqref{heat}, while $\beta$ is defined by 
    \begin{align}
    &\beta:=d\int_{0}^{\infty}\int_{\R^{n}}\left(\left|u\right|^{q-1}u\right)(y, \tau)dyd\tau+\int_{0}^{\infty}\int_{\R^{n}}b(y)\nabla u(y, \tau)dyd\tau-m, \label{beta}\\
    &  m\coloneqq\left(m_{1}, m_{2}, \cdots, m_{n}\right), \ \ m_i\coloneqq\int_{\mathbb{R}^n} x_i u_0(x)dx. \label{m}
    \end{align}
    
    \noindent
{\rm (ii)} If $n=1$, $ u_0 \in L^{1}( \mathbb{R}; 1+|x| )\cap L^{q}( \mathbb{R}; 1+|x|)$ and the initial data $u_{0}(x)$ can be expressed as $u_{0}(x)=\p_{x}v_{0}(x)$ for some $v_{0}\in L^{1}(\R)$, then \eqref{DC-1} holds with $\beta$ as in \eqref{beta}. 
       \end{prop}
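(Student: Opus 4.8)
The plan is to analyze the integral equation for \eqref{CD} written with respect to the \emph{constant-coefficient} heat semigroup $e^{t\Delta}$, rather than the variable-diffusion semigroup $T(t)$. Rewriting the equation as $\p_t u-\Delta u=\mathrm{div}(b\nabla u)+d\cdot\nabla(|u|^{q-1}u)$ and applying Duhamel's formula, the solution satisfies
\[
u(t)=e^{t\Delta}u_0+\int_0^t e^{(t-\tau)\Delta}\Bigl[\mathrm{div}(b\nabla u)+d\cdot\nabla(|u|^{q-1}u)\Bigr](\tau)\,d\tau .
\]
I would match each of the three pieces against a term in the claimed profile $MG(t)+\beta\cdot\nabla G(t)$: the linear flow $e^{t\Delta}u_0$ produces $MG(t)-m\cdot\nabla G(t)$; the convection integral produces $\bigl(d\int_0^\infty\!\!\int_{\R^n}|u|^{q-1}u\,dy\,d\tau\bigr)\cdot\nabla G(t)$; and the variable-diffusion integral produces $\bigl(\int_0^\infty\!\!\int_{\R^n}b\nabla u\,dy\,d\tau\bigr)\cdot\nabla G(t)$. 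Summing the three reproduces exactly $\beta$ as in \eqref{beta}. The choice of $e^{t\Delta}$ is essential here, since it is precisely the comparison with the free heat flow that makes the $b\nabla u$ contribution appear explicitly rather than being buried inside $T(t)$.

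For the linear part I would use the first-moment assumption $u_0\in L^1(\R^n;1+|x|)$ and a Taylor expansion of $G(x-y,t)$ in $y$ to obtain the refined estimate $\bigl\|e^{t\Delta}u_0-MG(t)+m\cdot\nabla G(t)\bigr\|_{L^p}=o\bigl(t^{-\frac n2(1-\frac1p)-\frac12}\bigr)$, the $o$ (rather than $O$) coming from a density argument approximating $u_0$ by compactly supported data. For the convection term, the governing fact is that $\int_0^\infty\|u(\tau)\|_{L^q}^q\,d\tau<\infty$ exactly when $\frac n2(q-1)>1$, i.e. $q>1+\frac2n$ (cf.~\eqref{int} and \eqref{CDdecay}); this makes $\int_0^\infty\!\!\int_{\R^n}|u|^{q-1}u\,dy\,d\tau$ converge absolutely, so that $\beta$ is well defined. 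I would then invoke the general profile lemma: for a source $F$ with $\int_0^\infty\|F(\tau)\|_{L^1}\,d\tau<\infty$,
\[
\Bigl\|\int_0^t e^{(t-\tau)\Delta}\,(\nabla\cdot F)(\tau)\,d\tau-\Bigl(\int_0^\infty\!\!\int_{\R^n}F\,dy\,d\tau\Bigr)\cdot\nabla G(t)\Bigr\|_{L^p}=o\bigl(t^{-\frac n2(1-\frac1p)-\frac12}\bigr),
\]
proved by splitting $\int_0^t=\int_0^{t/2}+\int_{t/2}^t$, replacing $\nabla G(x-y,t-\tau)$ by $\nabla G(x,t)$ on the first interval and controlling the error by the tail $\int_{t/2}^\infty\|F\|_{L^1}$ together with the smoothing of $e^{(t-\tau)\Delta}$ on the second. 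Applying this with $F=d\,|u|^{q-1}u$ yields the convection profile.

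The main obstacle is the variable-diffusion term, and this is exactly where the split between $n\ge2$ and $n=1$ enters. The same profile lemma applies with $F=b\nabla u$, but one must first establish $\int_0^\infty\|b\nabla u(\tau)\|_{L^1}\,d\tau<\infty$ (the integral over $\tau$ near $0$ being harmless by the regularity of $u$). Since $b\in L^1\cap L^\infty\subset L^{p'}$ for every $p'$, \eqref{CDdecay-2} gives $\|b\nabla u(\tau)\|_{L^1}\le\|b\|_{L^{p'}}\|\nabla u(\tau)\|_{L^p}\le C\tau^{-\frac n2(1-\frac1p)-\frac12}$, whose exponent exceeds $1$ for some admissible $p$ precisely when $\frac1p<\frac{n-1}{n}$, i.e. only when $n\ge2$. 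For $n=1$ this estimate is borderline ($\tau^{-1}$, logarithmically divergent), which is exactly why the hypothesis $u_0=\p_x v_0$ is imposed: it forces $M=\int_{\R}u_0\,dx=0$, whence $u$ gains an order of time decay and $\|b\,\p_x u(\tau)\|_{L^1}\le\|b\|_{L^1}\|\p_x u(\tau)\|_{L^\infty}=O\bigl(\tau^{-3/2}\bigr)$ becomes integrable. Once integrability is secured in each case, the profile lemma produces $\bigl(\int_0^\infty\!\!\int_{\R^n}b\nabla u\,dy\,d\tau\bigr)\cdot\nabla G(t)$ with an $o\bigl(t^{-\frac n2(1-\frac1p)-\frac12}\bigr)$ remainder. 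Collecting the three expansions and summing the remainders yields $\|u(t)-MG(t)-\beta\cdot\nabla G(t)\|_{L^p}=o\bigl(t^{-\frac n2(1-\frac1p)-\frac12}\bigr)$ with $\beta$ as in \eqref{beta}, which is \eqref{DC-1}.
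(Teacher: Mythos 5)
The paper itself never proves Proposition~\ref{prop.DC-super}; it is quoted from Duro--Carpio \cite{D-C}, so your attempt can only be measured against the analogous machinery the paper develops for the critical case (Propositions~\ref{prop2} and \ref{prop-new-h}). Against that standard, your central tool --- the ``general profile lemma'' under the sole hypothesis $\int_0^\infty\|F(\tau)\|_{L^1}\,d\tau<\infty$ --- is false as stated. The hypothesis gives no control of the Duhamel integral on $\tau\in[t/2,t]$: there the smoothing bound is $\int_{t/2}^t\|\nabla G(t-\tau)\|_{L^p}\|F(\tau)\|_{L^1}\,d\tau$, and $\|\nabla G(t-\tau)\|_{L^p}\sim (t-\tau)^{-\frac n2(1-\frac1p)-\frac12}$ is not even integrable near $\tau=t$ once $\frac n2(1-\frac1p)+\frac12\ge1$ (e.g. $p=\infty$). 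Concretely, sources of the form $F(y,\tau)=\mathbf{1}_{[k,\,k+2^{-k}]}(\tau)\,\varepsilon_k^{-n}\psi(y/\varepsilon_k)\,d$ with $\varepsilon_k\to0$ have summable $L^1_{\tau,y}$ mass, yet the Duhamel term blows up in $L^\infty$ at times $t_k=k+2^{-k}$. What the paper's Proposition~\ref{prop2} actually uses, besides $L^1$ integrability, is a pointwise-in-time $L^p$ decay of the source (estimate \eqref{rho-eval}, with exponent $-\frac n2(1-\frac1p)-\frac32$) to handle $\tau\ge\varepsilon t$, together with the $\varepsilon$-splitting in $y$ and dominated convergence to replace the kernel (your Taylor expansion would require spatial moments of $F$ that are not available). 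Your convection term survives this repair, since $\||u|^{q-1}u(\tau)\|_{L^p}\lesssim\tau^{-\frac n2(1-\frac1p)-\frac n2(q-1)}$ and $\frac n2(q-1)>1$ exactly when $q>1+\frac2n$.

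The repair fails, however, at the term that makes this proposition nontrivial: $F=b\nabla u$. The best bound obtainable from the norms you invoke is $\|b\nabla u(\tau)\|_{L^p}\le\|b\|_{L^r}\|\nabla u(\tau)\|_{L^s}\lesssim\tau^{-\frac n2(1-\frac1p)-\frac12-\frac n{2r}}$ with $\frac1p=\frac1r+\frac1s$, and the required extra decay $\frac n{2r}>\frac12$ forces $r<n$, hence $p<n$; for $p\in[n,\infty]$ (for $n=1$: for every $p$) the argument collapses. Closing this range needs the genuinely missing idea: $\nabla u(\cdot,\tau)\approx M\nabla G(\cdot,\tau)=O(\tau^{-\frac n2-1})$ on the region $|y|=O(1)$ where \eqref{C-b} localizes $b$, i.e. weighted/localized gradient estimates exploiting the spatial decay rate $\delta$ --- this is the actual content of \cite{D-C}, and it is why the paper's own one-dimensional analogue, Proposition~\ref{prop-new-h}, abandons any such lemma in favor of the scaling device with the case analysis $\delta\ge1$ versus $0<\delta<1$. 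Finally, in part (ii) your key claim that $u_0=\p_x v_0$ (hence $M=0$) yields $\|\p_x u(\tau)\|_{L^\infty}=O(\tau^{-3/2})$ is asserted, not proved (it needs a bootstrap through the integral equation), and even granting it the endpoint $p=\infty$ remains borderline, since the gain there is exactly $\frac12$ and not more. In short: the architecture --- constant-coefficient Duhamel representation, \cite{DZ92} for the linear part, a profile statement per source term --- is the right one and matches the paper's framework, but the lemma carrying the whole proof is wrong as stated, and the weighted estimates for $b\nabla u$ that would substitute for it are precisely what is absent.
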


\begin{rem}\label{rem-const-diffu}
We note that the term 
\[
\int_{0}^{\infty}\int_{\R^{n}}b(y)\nabla u(y, \tau)dyd\tau=-\int_{0}^{\infty}\int_{\R^{n}}\nabla b(y)u(y, \tau)dyd\tau\] 
does not appear if $b(x)$ is an identically constant. This fact agrees with the result given in {\rm \cite{Z2}}. 
\end{rem}

Comparing the asymptotic rates $o(t^{-(n/2)(q-1/p)+1/2})$ in \eqref{DC-sub}, $o( t^{-(n/2)(1-1/p)-1/2}\log t )$ in \eqref{Z-2nd} and \eqref{Z-2nd-2}, and $o( t^{-(n/2)(1-1/p)-1/2})$ in \eqref{DC-1} obtained from the three propositions above, we can find that $o( t^{-(n/2)(1-1/p)-1/2})$ in \eqref{DC-1} gives the fastest asymptotic rate. Noticing that this asymptotic rate is the same as the asymptotic rate which appears in the second order asymptotic expansion for the solution to the linear heat equation (see, \eqref{HEasymp2} below). 
Comparing this result with the other two cases, it seems that the asymptotic formulas in Propositions~\ref{prop.DC-sub} and \ref{prop.DC-critical} do not give a good approximation to the solution.
In addition, it should also be noted that a very interesting phenomenon occurs in the critical case $q=1+\frac{2}{n}$. 
More precisely, the logarithmic term appears not only in the asymptotic rate $o( t^{-(n/2)(1-1/p)-1/2}\log t )$ but also in the structures of the second asymptotic profiles: 
\begin{equation}\label{second}
\alpha_n\left\lvert M\right\rvert ^{\frac{2}{n}}M(\log t)d\cdot\nabla G(x, t) \ \ \text{in \eqref{Z-2nd}}, \ \ 
\left(K(t)+\alpha_1dM^{3}\right)(\log t)\p_{x}G(x, t) \ \ \text{in \eqref{Z-2nd-2}}. 
\end{equation}
Moreover, we remark that the logarithmic term comes from both of the effects of the nonlinearity and the variable diffusion. Also, we note that a similar phenomenon also can be observed for other nonlinear PDEs. 
For example, let us introduce the following parabolic system of chemotaxis: 
\begin{align*}
\begin{split}
&u_{t}=\Delta u-\nabla \cdot (u\nabla v), \ \ x\in \R^{n}, \ t>0, \\
&v_{t}=\Delta v-v+u, \ \ x\in \R^{n}, \ t>0. 
\end{split}
\end{align*}
Indeed, in Nagai--Yamada \cite{NY07}, the related result to \eqref{Z-2nd} and \eqref{Z-2nd-2} has been derived for the above system, in the case of $n=1$ (we can also refer to \cite{KM09}). 

On the other hand, Ishige--Kawakami~\cite{I-K} succeeded to derive the different asymptotic formula for any $q>1+\frac{1}{n}$. 
Actually, the following proposition has been established: 
\begin{prop}[\cite{I-K}]\label{IK}
Let $n\ge1$ and $q>1+\frac{1}{n}$. Suppose $\ u_0\in L^1( \mathbb{R}^n;1+\left\lvert x\right\rvert )\cap L^\infty(\mathbb{R}^n) $ and $a(x)\equiv1$. Then, for any $ p \in [1, \infty ] $, the solution $ u(x,t) $ to \eqref{CD} satisfies the following asymptotic formula:
 \begin{equation}\label{IK-ap}
\left\|u(t)-\hat{u}(t)\right\|_{L^{p}}=\begin{cases}
        O\left(t^{-\frac{n}{2}\left(1-\frac{1}{p}\right)-2\gamma}\right), &\displaystyle 2\gamma<\frac{1}{2},\\[4mm]
        O\left(t^{-\frac{n}{2}\left(1-\frac{1}{p}\right)-\frac{1}{2}}\log t\right), &\displaystyle 2\gamma=\frac{1}{2},\\[4mm]
        o\left(t^{-\frac{n}{2}\left(1-\frac{1}{p}\right)-\frac{1}{2}}\right), &\displaystyle 2\gamma>\frac{1}{2}
    \end{cases}  
 \end{equation}
as $t\to \infty$, where $\gamma:=\frac{n}{2}(q-1)-\frac{1}{2}$ and $\hat{u}(x, t)$ is defined by 
 \begin{align}
  &\begin{aligned}\label{IK-asymp}
    \hat{u}(x, t):=\ &MG(x, 1+t)-c(t)\cdot \nabla G(x, 1+t)\\
    &+|M|^{q-1}M\int_{0}^{t}\left( d\cdot \nabla G(t-\tau)*G(1+\tau)^{q} \right)(x)d\tau,
  \end{aligned}\\
  &c(t):=(c_{1}(t), c_{2}(t), \cdots, c_{n}(t)), \nonumber  \\
  &c_{i}(t):=\int_{\mathbb{R}^{n}}x_{i}u(x, t)dx-|M|^{q-1}M\int_{0}^{t}\int_{\mathbb{R}^{n}}x_{i}\left\{d\cdot \nabla\left(G(x, 1+\tau)^{q}\right)\right\}dxd\tau, \nonumber 
 \end{align}
 while $G(x, t)$ and $M$ are defined by \eqref{heat}. 
\end{prop}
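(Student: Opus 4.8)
The plan is to work entirely with the Duhamel (integral) formulation and reduce everything to a single error integral against $\nabla G$, whose magnitude is controlled by one borderline time integral. Since $a\equiv1$, the equation is the heat equation with source $d\cdot\nabla(|u|^{q-1}u)$, so
\[
u(t)=G(t)*u_{0}+\int_{0}^{t}d\cdot\nabla G(t-\tau)*\bigl(|u|^{q-1}u\bigr)(\tau)\,d\tau,
\]
where the divergence has been moved onto the kernel. Writing $\hat u=MG(1+t)-c(t)\cdot\nabla G(1+t)+\hat N(t)$ with $\hat N(t):=|M|^{q-1}M\int_{0}^{t}d\cdot\nabla G(t-\tau)*G(1+\tau)^{q}\,d\tau$, and setting $f(\tau):=|u|^{q-1}u(\tau)-|M|^{q-1}MG(1+\tau)^{q}=|u|^{q-1}u(\tau)-|MG(1+\tau)|^{q-1}MG(1+\tau)$, the key preliminary step is an exact identity for $c(t)$. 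Multiplying the equation by $x_{i}$ and integrating gives $\frac{d}{dt}\int x_{i}u\,dx=-d_{i}\int|u|^{q-1}u\,dx$, while integrating by parts in the defining integral for $c_{i}$ turns $\int x_{i}\,d\cdot\nabla(G(1+\tau)^{q})\,dx$ into $-d_{i}\int G(1+\tau)^{q}\,dx$; combining these yields $c_{i}(t)-m_{i}=-d_{i}\int_{0}^{t}\!\int f\,dx\,d\tau=:-d_{i}A(t)$. Hence $(c(t)-m)\cdot\nabla G(1+t)=-A(t)\,d\cdot\nabla G(1+t)$, the $c(t)$-term and the nonlinear term combine cleanly, and
\[
u(t)-\hat u(t)=R_{L}(t)+E(t),\qquad R_{L}(t):=G(t)*u_{0}-MG(1+t)+m\cdot\nabla G(1+t),
\]
\[
E(t):=\int_{0}^{t}d\cdot\nabla G(t-\tau)*f(\tau)\,d\tau-A(t)\,d\cdot\nabla G(1+t).
\]

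Next I would dispose of $R_{L}$ and estimate $f$. Since $u_{0}\in L^{1}(\R^{n};1+|x|)$, the standard second-order heat-kernel moment expansion gives $\|R_{L}(t)\|_{L^{p}}=o(t^{-\frac{n}{2}(1-1/p)-1/2})$, where the harmless shift $t\mapsto 1+t$ (also needed to keep $\hat N$ nonsingular at $\tau=0$) costs only $O(t^{-\frac n2(1-1/p)-1})$. For $f$ I would use the pointwise bound $|f(\tau)|\le C(|u(\tau)|^{q-1}+|MG(1+\tau)|^{q-1})\,|u(\tau)-MG(1+\tau)|$ coming from the $C^{1}$ estimate for $z\mapsto|z|^{q-1}z$, then apply Hölder together with the first-order profile of Proposition~\ref{CDasympthm} and the decay \eqref{CDdecay}. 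A direct computation with the exponents (using $2\gamma=n(q-1)-1$) gives, in the range producing the first two cases,
\[
\|f(\tau)\|_{L^{1}}\le C(1+\tau)^{-2\gamma-\frac12},\qquad
\|f(\tau)\|_{L^{p}}\le C(1+\tau)^{-\frac n2(1-\frac1p)-2\gamma-\frac12},
\]
while in the complementary range $f$ is summable in time and will land in the $o(\cdot)$ case.

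The heart of the proof is the estimate of $E(t)$, and this is where the trichotomy is produced. I would split the time integral at $\tau=t/2$. On $0<\tau<t/2$ I use $t-\tau\simeq t$ together with the $L^{1}\!\to\!L^{p}$ smoothing $\|\nabla G(t-\tau)*f\|_{L^{p}}\lesssim (t-\tau)^{-\frac n2(1-1/p)-1/2}\|f\|_{L^{1}}$ and with $|A(t)|\,\|\nabla G(1+t)\|_{L^{p}}$, so that both the genuine and the subtracted piece are controlled by $t^{-\frac n2(1-1/p)-1/2}\int_{0}^{t}(1+\tau)^{-2\gamma-1/2}\,d\tau$. On $t/2<\tau<t$ I instead keep $f$ and use the $L^{p}\!\to\!L^{p}$ bound $\|\nabla G(s)*f\|_{L^{p}}\lesssim s^{-1/2}\|f\|_{L^{p}}$ (this is what makes the argument uniform in $p$, since $\nabla G(t-\tau)$ is singular as $\tau\to t$), which gives the subordinate rate $O(t^{-\frac n2(1-1/p)-2\gamma})$. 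The displayed borderline integral is power-divergent, logarithmic, or convergent precisely according to $2\gamma\lessgtr\frac12$, and multiplying by the $\nabla G$ factor reproduces exactly the rates in \eqref{IK-ap}: the bound $O(t^{-\frac n2(1-1/p)-2\gamma})$ when $2\gamma<\frac12$ and $O(t^{-\frac n2(1-1/p)-1/2}\log t)$ when $2\gamma=\frac12$. In the case $2\gamma>\frac12$ the integral converges, so one first obtains $O(t^{-\frac n2(1-1/p)-1/2})$; to upgrade this to the claimed $o(\cdot)$ I would use that $A(t)\to A_{\infty}$ is finite and, on the early part of the integral, replace the crude majorant by the first-moment cancellation $\|\nabla G(t-\tau)*f-m_{0}(\tau)\nabla G(t-\tau)\|_{L^{p}}\lesssim (t-\tau)^{-\frac n2(1-1/p)-1}\int|y||f|\,dy$ (with $m_{0}(\tau):=\int f\,dx$), combined with a dominated-convergence argument for the tail.

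I expect the uniform-in-$p$ control of $E(t)$ to be the main obstacle. The delicate points are: (i) organizing the two regimes $\tau\lessgtr t/2$ so that the singular kernel $\nabla G(t-\tau)$ near $\tau=t$ is always paired with an $L^{p}$ norm of $f$ rather than an $L^{1}$ norm, which is essential once $\frac n2(1-1/p)+\frac12\ge1$; (ii) pinning down the precise decay of $f$ so that the critical integral is exactly $\int(1+\tau)^{-2\gamma-1/2}\,d\tau$ and the logarithm appears precisely at $2\gamma=\frac12$; and (iii) the $o$-improvement when $2\gamma>\frac12$, which cannot come from the crude majorant and needs the first-moment cancellation together with the convergence of $A(t)$. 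The reduction via the identity $c_{i}(t)-m_{i}=-d_{i}A(t)$ is what renders the otherwise implicit profile $\hat u$ tractable, and I would establish it first.
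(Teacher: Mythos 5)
First, a point of comparison: the paper does not prove Proposition~\ref{IK} at all --- it is imported from Ishige--Kawakami \cite{I-K}, where it is a corollary of a general expansion theory for $\partial_t u-\Delta u=F(x,t,u,\nabla u)$. So your direct Duhamel argument is necessarily a different route, and it is in fact close in spirit to the techniques this paper uses for its own results (Propositions~\ref{prop1} and \ref{prop2}). Your reduction is correct and is the right way to make $\hat u$ tractable: the identity $c_i(t)-m_i=-d_iA(t)$ with $A(t)=\int_0^t\int f\,dy\,d\tau$ and $f(\tau)=|u|^{q-1}u(\tau)-|M|^{q-1}MG(1+\tau)^q$ follows exactly as you say (multiplying the equation by $x_i$, the term $\int x_i\Delta u\,dx$ vanishes), the decomposition $u-\hat u=R_L+E$ is exact, your decay bounds for $\|f(\tau)\|_{L^1}$ and $\|f(\tau)\|_{L^p}$ are right, and the split at $\tau=t/2$ (singular kernel always paired with $\|f\|_{L^p}$) correctly yields the rates $O(t^{-\frac n2(1-\frac1p)-2\gamma})$ for $2\gamma<\frac12$ and $O(t^{-\frac n2(1-\frac1p)-\frac12}\log t)$ for $2\gamma=\frac12$.

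The genuine gap is the $o(\cdot)$ claim when $2\gamma>\frac12$, specifically in the sub-range $\frac12<2\gamma\le1$, i.e.\ $1+\frac{3}{2n}<q\le1+\frac2n$ (which includes the critical exponent this paper is devoted to). Your upgrade rests on the first-moment cancellation bound $(t-\tau)^{-\frac n2(1-\frac1p)-1}\||y|f(\tau)\|_{L^1}$ integrated over $0<\tau<t/2$. But the only moment information available from the results you invoke is the crude bound $\int|y|\,|u-MG(1+\tau)|\,dy\lesssim(1+\tau)^{\frac12}$ (both $u$ and $G$ have first moments of size $\tau^{1/2}$; Proposition~\ref{CDasympthm} carries no weighted information), whence $\||y|f(\tau)\|_{L^1}\lesssim\tau^{-\gamma-\frac12}\cdot\tau^{\frac12}=\tau^{-\gamma}$. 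Then
\[
t^{\frac n2\left(1-\frac1p\right)+\frac12}\int_0^{t/2}(t-\tau)^{-\frac n2\left(1-\frac1p\right)-1}\bigl\||y|f(\tau)\bigr\|_{L^1}\,d\tau
\;\lesssim\; t^{-\frac12}\int_0^{t/2}(1+\tau)^{-\gamma}\,d\tau\;\simeq\; t^{\frac12-\gamma},
\]
which does not tend to zero when $\gamma\le\frac12$; at $q=1+\frac2n$ it is exactly $O(1)$. To run your scheme you would need the weighted asymptotic $\int|y||u-MG|\,dy\lesssim\tau^{\frac12-\gamma}$, which you neither prove nor cite. The standard repair --- and what this paper does in the analogous Proposition~\ref{prop2} --- replaces moment cancellation by $\varepsilon$-localization: split $\tau\lessgtr\varepsilon t/2$ and, for small $\tau$, split $|y|\lessgtr\varepsilon\sqrt t$; on the inner region use the kernel-difference bound $\|\nabla G(\cdot-y,t-\tau)-\nabla G(\cdot,t)\|_{L^p}\le C\varepsilon\,t^{-\frac n2(1-\frac1p)-\frac12}$, and control the remaining regions using $\int_0^\infty\|f(\tau)\|_{L^1}d\tau<\infty$ together with dominated convergence, letting $\varepsilon\to0$ at the end. (Alternatively, splitting time at $t^\theta$ with $\theta$ small, and using the tail-smallness of the convergent integral $\int_{t^\theta}^\infty\|f(\tau)\|_{L^1}d\tau$ on the middle range, also works.) This repair needs only the time-integrability of $\|f\|_{L^1}$, which you do have throughout case $2\gamma>\frac12$; with it, your proof closes.
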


Here, we remark that actually in \cite{I-K}, not only the convection-diffusion equation \eqref{CD} with $a(x)\equiv1$ but also the asymptotic behavior of solutions to the more general parabolic equations: 
\begin{equation}\label{general}
  \begin{aligned}
    &\partial_t u -  \Delta u = F(x, t, u, \nabla u),\ \  x\in\mathbb{R}^n, \ t>0,\\
    &u(x,0)=u_0(x),\ \  x\in\mathbb{R}^n
\end{aligned}
\end{equation}
with some nonlinear function $F\in C(\R^{n}\times (0, \infty)\times \R \times \R^{n})$ are studied. The above Proposition~\ref{IK} is a consequence of that general theory. For details, see the original paper \cite{I-K}.

By virtue of Proposition~\ref{IK}, when $2\gamma>1/2$, an asymptotic expansion of the solution has been accomplished up to the order $o( t^{-(n/2)(1-1/p)-1/2})$. We note that the critical case $q=1+\frac{2}{n}$ is included in this case $2\gamma>1/2$. However, we note that these asymptotic profiles obtained in Propositions~\ref{prop.DC-sub}, \ref{prop.DC-critical} and \ref{prop.DC-super}, and Proposition~\ref{IK} are different each other.
The above function $\hat{u}(x, t)$ in \eqref{IK-asymp} has the advantage of being able to express the asymptotic profile of the solution to \eqref{CD} in a unified manner for any $q>1+\frac{1}{n}$. However, comparing the results obtained in \cite{D-C, Z2} and $\hat{u}(x, t)$, the effect of the nonlinearity on the asymptotic profiles is a little bit difficult to see, especially in the critical case $q=1+\frac{2}{n}$. Indeed, the logarithmic term does not appear explicitly in $\hat{u}(x, t)$. On the other hand, in \cite{D-C, Z2}, the second asymptotic profiles are decomposed and expressed in detail depending on the nonlinear exponent $q$, but the given asymptotic rate is slower than $o( t^{-(n/2)(1-1/p)-1/2})$ by $o(\log t)$ when $q=1+\frac{2}{n}$. Taking into account this situation, we believe that it might be effective to derive a new asymptotic formula which is better in terms of both the structure of the asymptotic profile and the asymptotic rate.

In this study, we focus on the critical case $q=1+\frac{2}{n}$ and study the asymptotic behavior of the solution to \eqref{CD}, based on the above perspectives. Especially, we investigate whether the asymptotic rate $o( t^{-(n/2)(1-1/p)-1/2})$ can be realized when the second asymptotic profiles are of the form in \eqref{second}. 
Namely, we consider that whether the asymptotic rate $o( t^{-(n/2)(1-1/p)-1/2}\log t )$ given in \eqref{Z-2nd} and \eqref{Z-2nd-2} is optimal or not. 
As a result, by analyzing the corresponding integral equation \eqref{IE} below in details, we have succeeded to give the more higher-order asymptotic expansion of the solution, which generalizes Proposition~\ref{prop.DC-critical} given by Duro--Carpio \cite{D-C}. 

\medskip
The rest of this paper is organized as follows. In Section~2, we shall state our main results Theorems~\ref{mainresult-1} and \ref{mainresult-2} below. Next, we would like to give the proofs of these results in Section~3. This section is divided into two parts. Subsection~3.1 is devoted to prove Theorem~\ref{mainresult-1}. In this part, we mainly study the effect of the nonlinearity. On the other hand, Theorem~\ref{mainresult-2} will be proven in Subsection~3.2. The effect of the variable diffusion is investigated in this part. 
The main novelty of this paper is to derive the higher-order asymptotic profiles which take into account the effects of both the nonlinearity and the variable diffusion. 
The key estimates are in Propositions~\ref{prop1} and \ref{prop-new-h} below, and the main techniques of the proofs are based on the idea used in \cite{F-I} for the asymptotic analysis of a dispersive-dissipative equation which has a similar structure of \eqref{CD}.

\section{Main Results}

In this section, we would like to state our main results. 
First, let us introduce the result in the case of $n\ge2$. 
In order to do that, we shall define the following new function $\Psi(x, t)$: 
\begin{align}
  \Psi (x,t) \coloneqq t^{-\frac{n}{2} -\frac{1}{2}}\Psi _\ast \left( \frac{x}{\sqrt{t}}  \right),\ \ 
  \Psi _\ast (x) \coloneqq d \cdot \nabla \left( \int_{0}^{1} {\left( G(1-s) \ast F(s) \right) \left( x \right)}ds \right), \label{DEF-Psi}
  \end{align}
  where $F(y, s)$ is defined by 
  \begin{align}
  F(y,s) \coloneqq s^{-\frac{n}{2} -1}F_\ast \left( \frac{y}{\sqrt{s} }  \right),\ \ F_\ast (y) \coloneqq \frac{1}{2^{n+2}\pi^{\frac{n}{2} +1}} \left\{ e^{-\frac{|y|^{2}}{4}\left( 1+\frac{2}{n}  \right)} - \left( 1+\frac{2}{n}  \right)^{-\frac{n}{2}} e^{-\frac{|y|^{2}}{4} }\right\}. \label{DEF-F} 
  \end{align}
  In addition, we shall set the constants $\mathcal{M}$ and $\mathcal{N}$ by
  \begin{align}
  &\mathcal{M} \coloneqq d\int_{0}^{1} {\int_{\mathbb{R}^n} {\left( |u|^{\frac{2}{n}} u \right)}(y,\tau)dy }d\tau + d\int_{1}^{\infty} {\int_{\mathbb{R}^n} {\left( |u|^{\frac{2}{n}} u -\left\lvert MG \right\rvert ^{\frac{2}{n}} MG\right)}(y,\tau)dy }d\tau, \label{DEF-mathM} \\
   &\mathcal{N} \coloneqq \int_{0}^{\infty}\int_{\R^{n}}b(y)\nabla u(y, \tau)dyd\tau, \ n\ge2.  \ \  
    \label{DEF-mathN}
\end{align}
Then, we are able to obtain the following higher-order asymptotic expansion of the solution: 
\begin{thm}\label{mainresult-1}
Let $n\ge2$ and $ q=1+\frac{2}{n} $. Suppose $ u_0 \in L^{1}( \mathbb{R}^n; 1+|x| )\cap L^{\infty }( \mathbb{R}^n )$. 
Then, for any $ p \in [1, \infty ] $, the solution $ u(x,t) $ to \eqref{CD} satisfies the following asymptotic formula:
\begin{equation}
  \begin{aligned}
    \lim_{t \to \infty} t^{\frac{n}{2} \left( 1-\frac{1}{p} \right) +\frac{1}{2} }\biggl\lVert u(t) &- MG(t)-\alpha_n|M|^{\frac{2}{n} }M(\log t) d \cdot \nabla G(t) \\
    &- (\mathcal{M}+\mathcal{N}-m)\cdot \nabla G(t)-|M|^{\frac{2}{n} }M \Psi(t) \biggl\rVert _{L^p} = 0,  \label{main-1}
  \end{aligned}
\end{equation}
where $G(x, t)$ and $M$, $m$ and $\alpha_{n}$ are defined by \eqref{heat}, \eqref{m} and \eqref{alpha_n}, respectively. In addition, $\Psi(x, t)$, $\mathcal{M}$ and $\mathcal{N}$ are defined by \eqref{DEF-Psi}, \eqref{DEF-mathM} and \eqref{DEF-mathN}, respectively. 
\end{thm}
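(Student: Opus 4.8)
The plan is to work with the Duhamel (integral equation) formulation \eqref{IE} of \eqref{CD}. Writing $a(x)=1+b(x)$ and regarding both the perturbation $\mathrm{div}(b\nabla u)$ and the convection term as inhomogeneities for the constant-coefficient heat semigroup, the solution satisfies
\begin{equation*}
u(t)=G(t)*u_{0}+\int_{0}^{t}G(t-\tau)*\Big(\mathrm{div}\big(b\nabla u\big)+d\cdot\nabla\big(|u|^{\frac{2}{n}}u\big)\Big)(\tau)\,d\tau.
\end{equation*}
I would expand the three contributions --- the linear evolution of the data, the variable-diffusion source, and the convection source --- separately, and show that their sum reproduces the profile in \eqref{main-1} up to an $L^{p}$-error of order $o(t^{-\frac{n}{2}(1-\frac1p)-\frac12})$.

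The convection term is the heart of the matter, and the decisive point is the algebraic identity behind \eqref{DEF-Psi}--\eqref{DEF-F}. A direct Gaussian computation gives $\int_{\mathbb{R}^{n}}G(y,\tau)^{q}\,dy=\alpha_{n}\tau^{-1}$, so that
\begin{equation*}
G(y,\tau)^{q}=\frac{\alpha_{n}}{\tau}\,G(y,\tau)+F(y,\tau),\qquad \int_{\mathbb{R}^{n}}F(y,\tau)\,dy=0,
\end{equation*}
with $F$ exactly the mean-zero remainder in \eqref{DEF-F}. I would split $|u|^{\frac2n}u=|M|^{\frac2n}M\,G^{q}\,\mathbf 1_{[1,\infty)}(\tau)+R$, where $R$ collects $|u|^{\frac2n}u$ on $[0,1]$ and $|u|^{\frac2n}u-|MG|^{\frac2n}MG$ on $[1,t]$. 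Using the semigroup identity $G(t-\tau)*\nabla G(\tau)=\nabla G(t)$ together with $\int_{1}^{t}\tau^{-1}\,d\tau=\log t$, the $\frac{\alpha_n}{\tau}G$ part produces the leading logarithm $\alpha_{n}|M|^{\frac2n}M(\log t)\,d\cdot\nabla G(t)$; a scaling change of variables $\tau=ts$ identifies $\int_{0}^{t}G(t-\tau)*d\cdot\nabla F(\tau)\,d\tau$ with the self-similar profile $\Psi(t)$, so the $F$-part yields $|M|^{\frac2n}M\,\Psi(t)$ (the contribution of $\tau\in[0,1]$ being negligible because the first moment of $d\cdot\nabla F$ vanishes); and the remainder $R$ contributes $\mathcal{M}\cdot\nabla G(t)$ through a first-moment expansion, since the first moment of $d\cdot\nabla R$ equals $-d\int R\,dy$ and $d\int_{0}^{t}\!\int_{\mathbb{R}^n} R\,dy\,d\tau\to\mathcal{M}$.

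For the remaining two pieces I would use the moment expansion of the heat kernel. Since $u_{0}\in L^{1}(\mathbb{R}^{n};1+|x|)$, the linear term expands as $G(t)*u_{0}=MG(t)-m\cdot\nabla G(t)+o(t^{-\frac n2(1-\frac1p)-\frac12})$. The variable-diffusion source has zero total mass (being a divergence), so its leading effect is also first order: the first moment of $\mathrm{div}(b\nabla u)$ equals $-\int_{\mathbb{R}^n} b\nabla u\,dy$, whence $\int_{0}^{t}G(t-\tau)*\mathrm{div}(b\nabla u)(\tau)\,d\tau\to\mathcal{N}\cdot\nabla G(t)$. This is exactly where $n\ge2$ is used: the absolute convergence of $\mathcal{N}=\int_{0}^{\infty}\!\int_{\mathbb{R}^n} b\nabla u$ rests on the integrability of $\int_{1}^{\infty}\!\int_{\mathbb{R}^n}|b\nabla u|\,dy\,d\tau$, which holds for $n\ge2$ but fails for $n=1$ (the reason the one-dimensional case is treated separately in Theorem~\ref{mainresult-2}). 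Collecting the four profiles together with the data terms then reproduces \eqref{main-1}.

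The main obstacle is quantitative rather than structural: in the critical case the zeroth moment of the convection source diverges logarithmically, so the naive first-moment argument is not uniformly valid and the splitting $G^{q}=\frac{\alpha_n}{\tau}G+F$ is indispensable for isolating the divergent part. I expect the genuine work to lie in the two key estimates (Propositions~\ref{prop1} and \ref{prop-new-h}). The first is the nonlinear control of $|u|^{\frac2n}u-|MG|^{\frac2n}MG$: feeding the second-order profile of Proposition~\ref{prop.DC-critical} into a H\"older estimate should give $\big\||u|^{\frac2n}u-|MG|^{\frac2n}MG\big\|_{L^{1}}\lesssim\tau^{-\frac32}\log\tau$, which makes $\mathcal{M}$ converge and forces the tail $\int_{t}^{\infty}$ to contribute only $o(t^{-\frac n2(1-\frac1p)-\frac12})$ after multiplication by $\nabla G(t)$. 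The second is the uniform justification of the self-similarity identity for $\Psi$: the singularity $F(\tau)\sim\tau^{-\frac n2-1}$ as $\tau\to0$ forces one to exploit the mean-zero property of $F$ to gain an extra factor $(t-\tau)^{-1/2}$, while near $\tau\to t$ one must instead move the derivative onto the heat kernel. Once these are in hand, the replacement errors $\nabla G(t-\tau)\approx\nabla G(t)$ and the second-moment corrections are routinely of order $o(t^{-\frac n2(1-\frac1p)-\frac12})$, completing the proof.
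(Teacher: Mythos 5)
Your proposal is correct and follows essentially the same route as the paper's proof: the same Duhamel decomposition into linear, convection, and variable-diffusion parts, the same mean-zero splitting $G^{q}=\alpha_{n}\tau^{-1}G+F$ that produces the logarithmic term and, after the scaling change of variables, the self-similar profile $\Psi$, and the same first-moment/mass-convergence arguments giving $\mathcal{M}\cdot\nabla G$, $\mathcal{N}\cdot\nabla G$ (with $n\ge2$ exactly ensuring the convergence of $\mathcal{N}$) and $-m\cdot\nabla G$. The only cosmetic differences are that the paper obtains the logarithm by recognizing $\alpha_{n}(\log t)\,d\cdot\nabla G$ as the Duhamel solution of a forced heat equation (equivalent to your semigroup identity) and cites Duro--Carpio for the variable-diffusion asymptotics rather than reproving them by the moment expansion you describe.
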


\begin{rem}\label{rem.self}
Compared with the general result given by Ishige--Kawakami {\rm \cite{I-K}} (i.e., the above Proposition~\ref{IK}), our formula has some advantages with respect to the structures of the asymptotic profiles. More precisely, we emphasize that the logarithmic term explicitly appears in \eqref{main-1} and the newly obtained asymptotic profile $\Psi(x, t)$ has the following parabolic self-similarity:   
\[
\Psi(x, t)=\lambda^{n+1}\Psi(\lambda x, \lambda^{2}t), \ \ \text{for} \ \ \lambda>0.
\]
\end{rem}

\begin{rem}\label{cor-optimal}
In view of the higher-order asymptotic expansion, we can obtain the optimal asymptotic rate to the second asymptotic profile, by virtue of the parabolic self-similarity of the asymptotic profiles. 
More precisely, under the same assumptions in Theorem~\ref{mainresult-1}, the solution $u(x, t)$ to \eqref{CD} satisfies the following estimate: 
  \begin{equation}\label{main2}
    \left\lVert u(t)-MG(t)-\alpha_n\left\lvert M\right\rvert ^{\frac{2}{n}}M(\log t)d\cdot\nabla G(t)\right\rVert _{L^p}=\left(C_*+o(1)\right)t^{-\frac{n}{2}\left(1-\frac{1}{p}\right)-\frac{1}{2}}
  \end{equation}
  as $ t \to \infty  $, for any $ p \in [1,\infty ] $, where the constant $C_{*}$ is defined by 
  \begin{equation}\label{C*}
  C_*\coloneqq\left\lVert \left(\mathcal{M}+\mathcal{N}-m\right)\cdot\nabla G(1) +\left\lvert M\right\rvert^{\frac{2}{n}} M\Psi_* \right\rVert _{L^p}.
  \end{equation}
  From this result, for the asymptotic rate to the second asymptotic profile, we can conclude that the optimal asymptotic rate is $O( t^{-(n/2)(1-1/p)-1/2})$ if and only if $C_{*}\neq0$. This fact implies that the asymptotic rate given in \eqref{Z-2nd} can be improved, i.e., our formula \eqref{main2} generalizes the result \eqref{Z-2nd} given in {\rm \cite{D-C}}. 
\end{rem}

\begin{rem}\label{rem-const-diffu-2}
Similarly as Remark~\ref{rem-const-diffu}, if $b(x)$ is an identically constant, then the constant $\mathcal{N}$ defined by \eqref{DEF-mathN} satisfies $\mathcal{N}=0$. Therefore, in this case, the above results \eqref{main-1} and \eqref{main2} with $\mathcal{N}=0$ are also valid for the case of $n=1$ (see, also Remark~\ref{rem-last} below). 
\end{rem}

Next, we would like to introduce the result in the case of $n=1$ and $q=3$. 
As we have already seen in \eqref{Z-2nd-2}, for deriving the asymptotic profile in this case, we need to introduce the function $K(t)$ defined by \eqref{alpha_n}. 
Moreover, in order to lead the higher-order asymptotic expansion, let us define the following new function $\Phi(x, t)$: 
\begin{align}
\Phi(x, t) \coloneqq t^{-1}\Phi _\ast \left( \frac{x}{\sqrt{t}}, t \right), \ \ \Phi_{*}(x, t)\coloneqq t^{\frac{3}{2}}\p_{x}\left( \int_{0}^{1} {\left( G(1-s) \ast \phi\left(\sqrt{t}\,\cdot, ts\right)\right) \left( x \right)} \,ds \right), \label{DEF-Phi}
\end{align}
where $\phi(y, \tau)$ is defined by 
\begin{align}
\phi(y, \tau)\coloneqq b(y)\p_{x}u(y, \tau)-\left(\int_{\R}b(z)\p_{x}u(z, \tau)dz\right)G(y, \tau). \label{DEF-phi}
\end{align}
Then, we can state the following higher-order asymptotic expansion of the solution for $n=1$:
\begin{thm}\label{mainresult-2}
Let $n=1$ and $ q=3$. Suppose $ u_0 \in L^{1}( \mathbb{R}; 1+|x| )\cap L^{\infty }( \mathbb{R} )$. 
Then, for any $ p \in [1, \infty ] $, the solution $ u(x,t) $ to \eqref{CD} satisfies the following asymptotic formula:
\begin{equation}
  \begin{aligned}
    \lim_{t \to \infty} t^{\frac{1}{2} \left( 1-\frac{1}{p} \right) +\frac{1}{2} }\biggl\lVert u(t) &- MG(t)-\left(K(t)+\alpha_{1}dM^{3}\right)(\log t)\p_{x}G(t) \\
    &- (\mathcal{M}-m)\p_{x}G(t)-M^{3}\Psi(t) -\Phi(t) \biggl\rVert _{L^p} = 0,  \label{main-2}
  \end{aligned}
\end{equation}
where $G(x, t)$ and $M$ are defined by \eqref{heat}, while $m$ is defined by \eqref{m}. In addition, $\alpha_{1}$ and $K(t)$ are defined by \eqref{alpha_n}. Moreover, $\Psi(x, t)$ and $\mathcal{M}$ are defined by \eqref{DEF-Psi} and \eqref{DEF-mathM}, respectively. Furthermore, $\Phi(x, t)$ is defined by \eqref{DEF-Phi}. 
\end{thm}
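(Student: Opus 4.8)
The plan is to start from the integral equation \eqref{IE} obtained by applying Duhamel's principle to \eqref{CD} relative to the free heat semigroup: writing $\mathrm{div}(a\nabla u)=\Delta u+\p_{x}(b\,\p_{x}u)$ and moving the perturbation onto the source, one gets, for $n=1$ and $q=3$ (so that $|u|^{q-1}u=u^{3}$),
\[
u(t)=G(t)*u_{0}+\int_{0}^{t}G(t-\tau)*\p_{x}\bigl(b\,\p_{x}u\bigr)(\tau)\,d\tau+d\int_{0}^{t}G(t-\tau)*\p_{x}\bigl(u^{3}\bigr)(\tau)\,d\tau=:I_{1}+I_{2}+I_{3}.
\]
For the linear part $I_{1}$ I would use the first-moment expansion of the heat convolution: since $u_{0}\in L^{1}(\R;1+|x|)$, one has $\|I_{1}-MG(t)+m\,\p_{x}G(t)\|_{L^{p}}=o(t^{-\frac12(1-\frac1p)-\frac12})$, which produces the profiles $MG$ and $-m\,\p_{x}G$ appearing in \eqref{main-2}.

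The diffusion part $I_{2}$ is the genuinely new ingredient for $n=1$, where $\int_{0}^{\infty}\!\int_{\R}|b\,\p_{x}u|\,dy\,d\tau$ diverges logarithmically and therefore cannot be absorbed into a convergent constant as in the case $n\ge2$. The key is the exact mean-zero splitting $b\,\p_{x}u=\bigl(\int_{\R}b\,\p_{x}u\,dz\bigr)G+\phi$ built into \eqref{DEF-phi}. Because $\int_{\R}\phi(\cdot,\tau)\,dy=0$ and $G(t-\tau)*\p_{x}G(\tau)=\p_{x}G(t)$ by the semigroup property, the first piece integrates exactly to $K(t)(\log t)\,\p_{x}G(t)$ by the definition of $K(t)$ in \eqref{alpha_n}, while a parabolic rescaling $x=\sqrt{t}\,\xi$, $\tau=ts$ of the $\phi$-integral reproduces precisely the self-similar quantity $\Phi(t)$ of \eqref{DEF-Phi}. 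Thus $I_{2}=K(t)(\log t)\,\p_{x}G(t)+\Phi(t)$ is an exact identity, and the only analytic task is to bound the size of $\Phi$; here I would invoke the spatial decay \eqref{C-b} of $b$ together with the gradient decay \eqref{CDdecay-2} to control $\phi$ and establish $\|\Phi(t)\|_{L^{p}}=O(t^{-\frac12(1-\frac1p)-\frac12})$, which I expect to be the content of Proposition~\ref{prop-new-h}.

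The nonlinear part $I_{3}$ carries the remaining profiles and the principal error. I would split the time integral at $\tau=1$; for $\tau\ge1$ write $u^{3}=M^{3}G^{3}+(u^{3}-M^{3}G^{3})$ and decompose $G^{3}=\alpha_{1}\tau^{-1}G+F$, where $F$ is the mean-zero part of $G^{3}$ as in \eqref{DEF-F} and the identity $\int_{\R}G(\cdot,\tau)^{3}\,dy=\alpha_{1}\tau^{-1}$ fixes $\alpha_{1}$ from \eqref{alpha_n}. The semigroup property turns the $\alpha_{1}\tau^{-1}G$ term into $\alpha_{1}dM^{3}(\log t)\,\p_{x}G(t)$, the same rescaling as for $\Phi$ turns the $F$-term into $M^{3}\Psi(t)$ (cf.\ \eqref{DEF-Psi}), and the remaining contributions from $\int_{0}^{1}\!\int_{\R}u^{3}$ and $\int_{1}^{\infty}\!\int_{\R}(u^{3}-M^{3}G^{3})$ collapse, via the approximation $G(t-\tau)*f(\tau)\approx\bigl(\int_{\R}f(\tau)\,dy\bigr)G(t)$, to $\mathcal{M}\,\p_{x}G(t)$ with $\mathcal{M}$ as in \eqref{DEF-mathM}. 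Collecting $I_{1}+I_{2}+I_{3}$ and regrouping the logarithmic and $\p_{x}G$ coefficients then yields exactly the profile in \eqref{main-2}.

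The hard part will be making the error $o(t^{-\frac12(1-\frac1p)-\frac12})$ rigorous in $I_{3}$ in this critical case, where the logarithm must be tracked exactly. Two estimates are decisive: first, the absolute convergence $\int_{1}^{\infty}\|u^{3}-M^{3}G^{3}\|_{L^{1}}\,d\tau<\infty$, which I would deduce from $\|u^{3}-M^{3}G^{3}\|_{L^{1}}\lesssim\bigl(\|u\|_{L^{3}}^{2}+\|MG\|_{L^{3}}^{2}\bigr)\|u-MG\|_{L^{3}}\lesssim t^{-3/2}\log t$, using \eqref{CDdecay} and the critical-case rate of Proposition~\ref{CDasympthm}; and second, the control of the replacements $G(t-\tau)\approx G(t)$ and $G(t)*f\approx\bigl(\int_{\R}f\,dy\bigr)G(t)$ inside the Duhamel integrals, which requires first-moment bounds on $u^{3}$ and a careful separation of the divergent $(\log t)\,\p_{x}G$ contribution from the genuinely $o(\cdot)$ remainder. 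I expect these quantitative estimates to be precisely the role of Proposition~\ref{prop1}.
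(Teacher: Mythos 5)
Your plan follows the paper's skeleton quite closely: the same trisection of \eqref{IE} (your $I_{1},I_{2},I_{3}$ are the paper's $G(t)*u_{0}$, $D_{2}$, $D_{1}$), the moment expansion of Proposition~\ref{linereval} for the linear part, and, for the nonlinear part, the same splitting at $\tau=1$, $u^{3}=M^{3}G^{3}+\rho$, $G^{3}=\alpha_{1}\tau^{-1}G+F$, with the logarithm produced by the identity $\p_{x}G(t-\tau)*G(\tau)=\p_{x}G(t)$ (the paper obtains the very same identity by writing $V=\alpha_{n}(\log t)\,d\cdot\nabla G$ as a Duhamel integral; Proposition~\ref{prop1} then controls the $F$-remainder $\int_{0}^{1/t}$ left over by the rescaling, using $\int_{\R}F_{*}\,dy=0$). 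Where you genuinely depart from the paper is $I_{2}$: your observation that the mean-zero splitting \eqref{DEF-phi}, the semigroup property and the parabolic rescaling give the \emph{exact} identity $D_{2}=K(t)(\log t)\p_{x}G(t)+\Phi(t)$ is correct, and it is cleaner than the paper's route, which splits $D_{2}=\eta+h$ at time $1$ and proves two separate asymptotic estimates (Propositions~\ref{prop-new2} and \ref{prop-new-h}, the latter with a case distinction in $\delta$). Indeed, the paper's two remainders $\eta-\mathcal{L}\p_{x}G$ and $h-\mathcal{K}(t)(\log t)\p_{x}G-\Phi$ are exactly negatives of each other, both equal to $\pm\, t\,\p_{x}\int_{0}^{1/t}\bigl(G(1-s)*\phi(\sqrt{t}\,\cdot,ts)\bigr)(x/\sqrt{t})\,ds$, so with your identity the $D_{2}$-contribution to the error vanishes identically and no bound on $\Phi$ is needed for the theorem itself (only for interpreting $\Phi$ as a profile, as in the paper's remarks).

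Two technical points in your plan need repair. First, you invoke \eqref{CDdecay-2}, which is stated only for $t>1$, but $K(t)$, $\Phi(t)$ and your exact identity involve $b\,\p_{x}u$ at times $\tau\in(0,1)$; to know these objects are finite you need $\|\p_{x}u(\tau)\|_{L^{1}}\le C\tau^{-1/2}$ as $\tau\to0^{+}$, and the uniform bound on $\Phi_{*}$ that you announce requires the full small-time estimate $\|\p_{x}u(\tau)\|_{L^{p}}\le C\tau^{-\frac{1}{2}(1-\frac{1}{p})-\frac{1}{2}}$ for all $\tau>0$ and $p\in[1,\infty]$ — this is precisely the paper's Lemma~\ref{lem.nu-est-Lp}, proved by rescaling, the Gronwall-type Lemma~\ref{Gronwall} and an iteration, and it cannot simply be cited from \eqref{CDdecay-2}. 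Second, your mechanism for collapsing $\int_{0}^{1}\int_{\R}u^{3}$ and $\int_{1}^{\infty}\int_{\R}\rho$ onto $\mathcal{M}\,\p_{x}G$ — ``first-moment bounds on $u^{3}$'' — is not available under the stated hypotheses (nothing gives decay of $\int_{\R}|y||\rho(y,\tau)|\,dy$ without proving new weighted estimates), and even granted such bounds the mean-value argument diverges near $\tau\approx t$. The tool the paper uses (Proposition~\ref{prop2}) is an $\varepsilon$-splitting of space-time ($|y|\gtrless\varepsilon\sqrt{t}$, $\tau\lessgtr\varepsilon t/2$) combined with the dominated convergence theorem, which needs only $\int_{1}^{\infty}\|\rho(\tau)\|_{L^{1}}\,d\tau<\infty$ — an estimate you do identify correctly via the critical rate $\|\rho(\tau)\|_{L^{1}}\lesssim\tau^{-3/2}\log(2+\tau)$. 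With these two repairs, your argument yields \eqref{main-2}.
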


\begin{rem}
Since the functions $K(t)$ and $\Phi_{*}(x, t)$ are depending on the original solution $u(x, t)$, 
 one may consider that $K(t)(\log t)\p_{x}G(x, t)$ and $\Phi(x, t)$ in \eqref{main-2} should be called the ``correction terms'' rather than the ``asymptotic profiles''. 
 However, we are able to find their good properties as $t\to \infty$. Actually, for any $p\in [1, \infty]$, the following uniform boundedness hold: 
 \[
 \left|K(t)\right|\le C, \ \ \left\|\Phi_{*}(t)\right\|_{L^{p}}\le C, \ \ t>1. 
 \]
 For the proofs, see \eqref{N-finite} and \eqref{final-est} through \eqref{I1-I3-comb} below. 
 Therefore, taking into account these facts, 
 we can conclude that $K(t)(\log t)\p_{x}G(x, t)$ and $\Phi(x, t)$ decay at least on the following orders:
 \[
 \left\|K(t)(\log t)\p_{x}G(t)\right\|_{L^{p}}=O\left(t^{-\frac{1}{2}\left(1-\frac{1}{p}\right)-\frac{1}{2}}\log t\right), \ \ 
  \left\|\Phi(t)\right\|_{L^{p}}=O\left(t^{-\frac{1}{2}\left(1-\frac{1}{p}\right)-\frac{1}{2}}\right) \ \ \text{as} \ t\to \infty. 
 \]
These orders are the same as those for the second asymptotic profile (i.e., $\alpha_{1}dM^{3}(\log t)\p_{x}G(x, t)$) and the third asymptotic profile (i.e., $(\mathcal{M}-m)\p_{x}G(x, t)+M^{3}\Psi(x, t)$), respectively. 
\end{rem}

\begin{rem}\label{rem-last}
Similarly as Remarks~\ref{rem-const-diffu} and \ref{rem-const-diffu-2}, if $b(x)$ is an identically constant, then $K(t)$ and $\Phi(x, t)$ always vanish. Moreover, as we already mentioned in Remark~\ref{rem-const-diffu-2}, $\mathcal{N}$ also vanishes in this case. 
Therefore, under this situation, Theorems~\ref{mainresult-1} and \ref{mainresult-2} can be unified. 
More precisely, the above results \eqref{main-1} and \eqref{main2} with $\mathcal{N}=0$ are valid for all dimensions $n\ge1$. 
\end{rem}

\section{Proof of the Main Results}

In this section, we would like to prove our main results Theorems~\ref{mainresult-1} and \ref{mainresult-2}, i.e., we shall derive the asymptotic formulas \eqref{main-1} and \eqref{main-2}. In what follows, let us consider the case of $ q = 1 + \frac{2}{n} $. In order to show these results, we need to analyze the corresponding integral equation for \eqref{CD}: 
\begin{align}
  u(x,t) &= \left(G(t) \ast u_0\right) \left( x \right) + \int_{0}^{t} {\left( d \cdot \nabla G(t-\tau) \ast \left( \left\lvert u \right\rvert ^{\frac{2}{n} } u \right)(\tau) \right) \left( x \right)}d\tau \nonumber \\
   &\ \ \ \ +\sum_{i=1}^{n}\int_{0}^{t}\left(\frac{\p G}{\p x_{i}}(t-\tau)\ast b\frac{\p u}{\p x_{i}}(\tau) \right)(x)d\tau \nonumber \\
   &\eqqcolon \left(G(t) \ast u_0\right) \left( x \right) + D_{1}(x, t) + D_{2}(x, t). \label{IE}
  \end{align}
Then, in order to complete the proof of Theorems~\ref{mainresult-1} and \ref{mainresult-2}, we shall derive the leading term for the all terms in the right hand side of \eqref{IE}. 
We start with introducing the basic properties for the heat kernel $G(x, t)$. First, let us recall the $L^{p}$-decay estimate of it. Let $\alpha \in \mathbb{Z}_{+}^{n}$ be a multi-index and $ p \in [1, \infty ] $. Then, the following estimate holds (for the proof, see e.g.~\cite{G-S}): 
\begin{equation}\label{Gdecay}
  \left\lVert \partial^\alpha_x G(t) \right\rVert  _{L^p} \leq Ct^{-\frac{n}{2}\left(1-\frac{1}{p}\right)-\frac{|\alpha|}{2}},\ \  t>0. 
\end{equation}
Moreover, we would like to introduce the following asymptotic formula for $\left(G(t) \ast u_0\right)(x)$. For the proof, let us refer to \cite{DZ92}. This formula plays an important role in the proof of main results. 
\begin{prop}[\cite{DZ92}]\label{linereval}
  Let $u_0 \in L^1 (\mathbb{R}^n;1+\left\lvert x\right\rvert)$ and $ p \in [1, \infty ] $. Then, we have 
  \begin{equation}\label{HEasymp2}
    \lim_{t\to \infty}t^{\frac{n}{2}\left(1-\frac{1}{p}\right)+\frac{1}{2}}\left\| G(t)*u_0-MG(t)+m\cdot\nabla G(t)\right\|_{L^p}=0,
  \end{equation}
  where $ G(x, t) $ and $M$ are defined by \eqref{heat}, while $ m $ is defined by \eqref{m}.
\end{prop}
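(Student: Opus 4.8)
The plan is to read \eqref{HEasymp2} as a second-order Taylor expansion of the convolution $(G(t)*u_0)(x)=\int_{\R^n}G(x-y,t)u_0(y)\,dy$ in the $y$-variable about $y=0$. Inserting the definitions $M=\int_{\R^n}u_0\,dy$ and $m_i=\int_{\R^n}y_iu_0\,dy$, one has the exact identity
\[
(G(t)*u_0)(x)-MG(x,t)+m\cdot\nabla G(x,t)=\int_{\R^n}\bigl[G(x-y,t)-G(x,t)+y\cdot\nabla G(x,t)\bigr]u_0(y)\,dy=:R(x,t),
\]
in which the bracketed expression is precisely the second-order Taylor remainder of the map $y\mapsto G(x-y,t)$ at $y=0$ (the sign flips from differentiating in $y$ combine with those in the Taylor formula so that the linear term appears as $+\,y\cdot\nabla G(x,t)$). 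Thus the whole statement reduces to showing $\|R(t)\|_{L^p}=o\bigl(t^{-\frac{n}{2}(1-1/p)-1/2}\bigr)$ as $t\to\infty$.

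First I would split the $y$-integral at the parabolic scale $|y|=\sqrt t$. On the far region $|y|>\sqrt t$, I estimate the three terms of $R$ separately by Minkowski's integral inequality, using translation invariance of the $L^p$ norm together with \eqref{Gdecay}, namely $\|G(\cdot-y,t)\|_{L^p}=\|G(t)\|_{L^p}\le Ct^{-\frac n2(1-1/p)}$ and $\|\nabla G(t)\|_{L^p}\le Ct^{-\frac n2(1-1/p)-1/2}$. This bounds the far-region part of $\|R(t)\|_{L^p}$ by
\[
Ct^{-\frac{n}{2}(1-\frac1p)}\int_{|y|>\sqrt t}|u_0|\,dy+Ct^{-\frac{n}{2}(1-\frac1p)-\frac12}\int_{|y|>\sqrt t}|y|\,|u_0|\,dy.
\]
Since $|y|u_0\in L^1$, the second tail integral is $o(1)$, and the first tail integral is controlled by $t^{-1/2}\int_{|y|>\sqrt t}|y|\,|u_0|\,dy=o(t^{-1/2})$; hence both contributions are $o\bigl(t^{-\frac n2(1-1/p)-1/2}\bigr)$.

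On the near region $|y|\le\sqrt t$, I would use the integral form of the remainder, $G(x-y,t)-G(x,t)+y\cdot\nabla G(x,t)=\int_0^1(1-\theta)\sum_{i,j}y_iy_j(\partial_{x_i}\partial_{x_j}G)(x-\theta y,t)\,d\theta$, and again pull the $L^p$ norm inside by Minkowski's inequality, invoking \eqref{Gdecay} for $|\alpha|=2$, i.e.\ $\|\partial_x^\alpha G(t)\|_{L^p}\le Ct^{-\frac n2(1-1/p)-1}$; bounding $|y_iy_j|\le|y|^2$ gives a near-region contribution $\le Ct^{-\frac n2(1-1/p)-1}\int_{|y|\le\sqrt t}|y|^2|u_0|\,dy$. \emph{The main obstacle is concentrated here:} the crude estimate $\int_{|y|\le\sqrt t}|y|^2|u_0|\,dy\le\sqrt t\int_{\R^n}|y|\,|u_0|\,dy$ only yields the sharp order $O\bigl(t^{-\frac n2(1-1/p)-1/2}\bigr)$ rather than the required little-$o$, and because only the first moment of $u_0$ is assumed finite one cannot appeal to $|y|^2u_0\in L^1$. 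I would overcome this by inserting an intermediate cutoff $R>0$ and splitting
\[
\int_{|y|\le\sqrt t}|y|^2|u_0|\,dy\le R\int_{\R^n}|y|\,|u_0|\,dy+\sqrt t\int_{|y|>R}|y|\,|u_0|\,dy,
\]
then dividing by $\sqrt t$, letting $t\to\infty$ with $R$ fixed, and finally letting $R\to\infty$. This double limit forces $t^{-1/2}\int_{|y|\le\sqrt t}|y|^2|u_0|\,dy\to0$, upgrading the near-region bound to $o\bigl(t^{-\frac n2(1-1/p)-1/2}\bigr)$. Combining the two regions yields $\|R(t)\|_{L^p}=o\bigl(t^{-\frac n2(1-1/p)-1/2}\bigr)$ and hence \eqref{HEasymp2}.
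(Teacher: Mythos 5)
Your proof is correct. Note that the paper does not contain its own proof of Proposition~\ref{linereval}: it is quoted as a known result with the proof deferred to \cite{DZ92}, so there is no in-paper argument to compare against; what you have written is a legitimate self-contained replacement for that citation. Your route is the standard elementary one: the exact moment identity expressing $G(t)*u_0-MG(t)+m\cdot\nabla G(t)$ as an integral of the second-order Taylor remainder of $y\mapsto G(x-y,t)$, Minkowski's integral inequality combined with translation invariance and \eqref{Gdecay} (with $|\alpha|\le 2$), a splitting at the parabolic scale $|y|=\sqrt t$, and the key intermediate cutoff $R$ that upgrades the crude $O\bigl(t^{-\frac n2(1-1/p)-1/2}\bigr)$ bound to $o\bigl(t^{-\frac n2(1-1/p)-1/2}\bigr)$ using only finiteness of the first moment. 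All the delicate points check out: the sign bookkeeping in the identity is right; the far-region zeroth-order term is correctly absorbed via $\int_{|y|>\sqrt t}|u_0|\,dy\le t^{-1/2}\int_{|y|>\sqrt t}|y|\,|u_0|\,dy=o(t^{-1/2})$; and the double limit ($t\to\infty$ with $R$ fixed, then $R\to\infty$) is legitimate since the limsup in $t$ of $t^{-1/2}\int_{|y|\le\sqrt t}|y|^2|u_0|\,dy$ is bounded by $\int_{|y|>R}|y|\,|u_0|\,dy$, which vanishes as $R\to\infty$. It is worth observing that this cutoff-plus-dominated-convergence device is exactly the same spirit as the $\varepsilon$-splitting the paper uses in its own proofs (e.g.\ the treatment of $X_2$ and $X_3$ in the proof of Proposition~\ref{prop2}), so your argument fits naturally into the paper's toolkit while making the linear result self-contained rather than cited.
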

By virtue of this proposition, to complete the proofs of the main results, we only need to consider the asymptotic behavior of the Duhamel term in \eqref{IE}. 
In what follows, we shall derive the asymptotic profiles for $D_{1}(x, t)$ and $D_{2}(x, t)$ in Subsections 3.1 and 3.2, respectively. 

\subsection{Proof of Theorem~\ref{mainresult-1}}

In this subsection, we would like to prove Theorem~\ref{mainresult-1}. 
First, let us consider the asymptotic behavior of $D_{1}(x, t)$ in \eqref{IE}. 
From the results obtained in Zuazua~\cite{Z2} and Duro--Carpio~\cite{D-C}, we can see that the asymptotic profile of $ D_{1}(x, t) $ is given by $ \alpha_n|M|^{\frac{2}{n} }M(\log t)d \cdot \nabla G(x, t)$. 
In the present paper, we construct the second asymptotic profile of $ D_{1}(x,t) $ and generalize the results given in \cite{D-C, Z2}. 
More precisely, the following asymptotic formula can be established: 
\begin{thm}\label{duhameleval}
Let $n\ge1$ and $ q=1+\frac{2}{n} $. Suppose $ u_0 \in L^{1}( \mathbb{R}^n; 1+|x| )\cap L^{\infty }( \mathbb{R}^n )$. 
Then, for any $ p \in [1, \infty ] $, the following asymptotic formula holds: 
\begin{equation}\label{D1-asymp}
  \begin{aligned}
    \lim_{t \to \infty} t^{\frac{n}{2} \left( 1-\frac{1}{p} \right)+\frac{1}{2}}\biggl\|D_{1}(t)-\alpha_n|M|^{\frac{2}{n} }M\left(\log t\right) d \cdot \nabla G(t)-|M|^{\frac{2}{n} }M \Psi(t) - \mathcal{M}\cdot \nabla G(t)\biggl\|_{L^p} = 0,  
  \end{aligned}
  \end{equation}
where $D_{1}(x, t)$ is defined by \eqref{IE}, while $G(x, t)$ and $M$, and $\alpha_{n}$ are defined by \eqref{heat} and \eqref{alpha_n}, respectively. In addition, $\Psi(x, t)$, $\mathcal{M}$ and $\mathcal{N}$ are defined by \eqref{DEF-Psi}, \eqref{DEF-mathM} and \eqref{DEF-mathN}, respectively. 
\end{thm}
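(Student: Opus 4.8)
The plan is to work directly with the definition $D_1(x,t)=\int_0^t d\cdot\nabla G(t-\tau)*(|u|^{\frac2n}u)(\tau)\,d\tau$ in \eqref{IE}, splitting the time integral at $\tau=1$ and, on $[1,t]$, replacing the nonlinearity by its expected leading profile. Writing $g(\tau):=(|u|^{\frac2n}u)(\tau)$ and, for $\tau\ge1$, $h(\tau):=g(\tau)-|MG|^{\frac2n}MG(\tau)$, and using $|MG|^{\frac2n}MG=|M|^{\frac2n}M\,G^{q}$ with $q=1+\frac2n$, I would decompose
\[
D_1=\int_0^1 d\cdot\nabla G(t-\tau)*g(\tau)\,d\tau+|M|^{\frac2n}M\int_1^t d\cdot\nabla G(t-\tau)*G(\tau)^{q}\,d\tau+\int_1^t d\cdot\nabla G(t-\tau)*h(\tau)\,d\tau.
\]
The key algebraic observation driving everything is the exact identity $G(\tau)^{q}=\frac{\alpha_n}{\tau}G(\tau)+F(\tau)$, with $F$ precisely the function in \eqref{DEF-F}; this follows from a direct computation using $\int_{\R^n}G(\tau)^{q}\,dy=\alpha_n/\tau$ (which is where $\alpha_n$ originates) and matching the self-similar profiles. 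In particular $F(\tau)$ has zero mass and is radially symmetric.

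For the $G^{q}$-term I would use this identity to split once more. The $\frac{\alpha_n}{\tau}G$ piece collapses, via the semigroup property $G(t-\tau)*G(\tau)=G(t)$, to $\alpha_n(\log t)\,d\cdot\nabla G(t)$, the known leading profile. For the $F$-piece I would exploit parabolic self-similarity: a scaling computation (substituting $\tau=ts$ and $y=\sqrt{t}\,w$) yields the exact identity $\int_0^t d\cdot\nabla G(t-\tau)*F(\tau)\,d\tau=\Psi(x,t)$ with $\Psi$ as in \eqref{DEF-Psi}. Since my integral runs only over $[1,t]$, I would subtract the correction $\int_0^1 d\cdot\nabla G(t-\tau)*F(\tau)\,d\tau$; because $F(\tau)$ has vanishing zeroth and first moments, a Taylor expansion of the kernel $d\cdot\nabla G(x-y,t-\tau)$ in $y$ leaves only a contribution of the size of $\nabla^3 G(t)$, i.e. $O(t^{-\frac n2(1-\frac1p)-\frac32})$ in $L^p$, which is negligible at the target rate. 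Thus the $G^{q}$-term produces exactly $|M|^{\frac2n}M\bigl[\alpha_n(\log t)\,d\cdot\nabla G(t)+\Psi(t)\bigr]$ up to $o(t^{-\frac n2(1-\frac1p)-\frac12})$.

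It remains to show that the two remainder integrals combine to give $\mathcal{M}\cdot\nabla G(t)$. For the $[0,1]$-integral, replacing $\nabla G(t-\tau)$ by $\nabla G(t)$ and the convolution by the spatial integral of $g(\tau)$ produces $\bigl(d\int_0^1\!\int g\bigr)\cdot\nabla G(t)$, the first part of $\mathcal{M}$; the errors are controlled by $\|\nabla G(t-\tau)-\nabla G(t)\|_{L^p}$ and by the first spatial moment of $g(\tau)$, both of order $O(t^{-\frac n2(1-\frac1p)-\frac32})$, and here I would invoke the weighted bound $u(\tau)\in L^1(\R^n;1+|x|)$ to control $\int_0^1\!\int|y|\,|u|^{q}\,dy\,d\tau$. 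For the $[1,t]$-integral of $h$, the target is $\bigl(d\int_1^\infty\!\int h\bigr)\cdot\nabla G(t)$, the second part of $\mathcal{M}$. I would split $[1,t]$ into $[1,t/2]$ and $[t/2,t]$: on $[1,t/2]$ I use the same moment expansion together with the extension of the $\tau$-integral to $[1,\infty)$, and on $[t/2,t]$ I estimate directly using the decay of $\nabla G$ and of $h$.

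The main obstacle is this last step. I must first establish $\int_1^\infty\!\int_{\R^n}|h(y,\tau)|\,dy\,d\tau<\infty$, so that $\mathcal{M}$ is well-defined, and then obtain convergence at the sharp rate $o(t^{-\frac n2(1-\frac1p)-\frac12})$. The estimate for $\|h(\tau)\|_{L^1}$ would follow from $\bigl||u|^{\frac2n}u-|MG|^{\frac2n}MG\bigr|\le C(|u|^{\frac2n}+|MG|^{\frac2n})|u-MG|$, H\"older's inequality, the decay \eqref{CDdecay}, and the critical-case bound $\|u(\tau)-MG(\tau)\|_{L^p}\le C\tau^{-\frac n2(1-\frac1p)-\frac12}\log(2+\tau)$ from Proposition~\ref{CDasympthm}. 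Precisely because $q=1+\frac2n$ is critical, these bounds are borderline and carry logarithmic factors, so the integrability in $\tau$, the smallness of the tail $\int_{t/2}^\infty$, and the near-diagonal region $\tau\sim t$ must all be tracked carefully; this delicate bookkeeping, rather than any single conceptual difficulty, is the crux of the proof.
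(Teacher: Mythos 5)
Your treatment of the self-similar nonlinear part is correct and coincides, up to a change of variables, with the paper's Proposition~\ref{prop1}: the identity $|G|^{2/n}G=\alpha_n\tau^{-1}G+F$ with $F$ as in \eqref{DEF-F}, the collapse of the $\alpha_n\tau^{-1}G$ piece onto $\alpha_n(\log t)\,d\cdot\nabla G(t)$ (you use the semigroup property directly, the paper identifies $\alpha_n(\log t)\,d\cdot\nabla G$ as a Duhamel solution of \eqref{CP-V}; these are the same computation), the scaling identity producing $\Psi$, and the estimate of the missing $\tau\in[0,1]$ piece (equivalently $s\in[0,1/t]$ after rescaling) via the vanishing moments of $F_\ast$ --- your version, using both the zeroth and first moments, even gives $O(t^{-\frac n2(1-\frac1p)-\frac32})$ where the paper's \eqref{dag-est} settles for $O(t^{-\frac n2(1-\frac1p)-1})$. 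The finiteness of $\mathcal{M}$ via the critical-case bound of Lemma~\ref{lemma} is also the paper's route.

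The genuine gap is in the remainder $\int_1^t d\cdot\nabla G(t-\tau)\ast h(\tau)\,d\tau$, $h=\rho:=|u|^{2/n}u-|MG|^{2/n}MG$, where you propose a first-moment Taylor expansion of the kernel on $[1,t/2]$. That expansion produces the term
\begin{equation*}
\int_1^{t/2}\bigl\|D^2G(t-\tau)\bigr\|_{L^p}\int_{\R^n}|y|\,|\rho(y,\tau)|\,dy\,d\tau
\le Ct^{-\frac n2\left(1-\frac1p\right)-1}\int_1^{t/2}\int_{\R^n}|y|\,|\rho(y,\tau)|\,dy\,d\tau,
\end{equation*}
so you need $\int_1^{t/2}\int|y|\,|\rho|\,dy\,d\tau=o(t^{1/2})$. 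But the best bound obtainable from the hypotheses is $\int|y|\,|\rho(y,\tau)|\,dy\le C\bigl(\|u(\tau)\|_{L^\infty}^{2/n}+\|MG(\tau)\|_{L^\infty}^{2/n}\bigr)\int|y|\,|u-MG|(y,\tau)\,dy\le C\tau^{-1}\cdot\sqrt\tau=C\tau^{-1/2}$, since the first moments of $u(\tau)$ and of $MG(\tau)$ each grow like $\sqrt\tau$ and no cancellation between them is available in the weighted norm. This integrates to exactly $O(t^{1/2})$, so your expansion yields $O(t^{-\frac n2(1-\frac1p)-\frac12})$ --- the target order itself, not $o(\cdot)$ --- and the limit \eqref{D1-asymp} does not follow. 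Closing this along your lines would require a weighted decay estimate $\int|y|\,|u(y,\tau)-MG(y,\tau)|\,dy=o(\sqrt\tau)$, i.e.\ a weighted analogue of Proposition~\ref{CDasympthm}, which is proved nowhere and is precisely what the paper avoids: in Proposition~\ref{prop2} the spatial integral is split at $|y|=\e\sqrt t$ (the terms $X_2$, $X_3$ in \eqref{x-split}), the kernel difference is bounded by $C\e\,t^{-\frac n2(1-\frac1p)-\frac12}$ uniformly for $|y|\le\e\sqrt t$, $\tau\le\e t/2$ (estimate \eqref{Gdiff}), and the far region is handled by dominated convergence (the function $z(t)$), all of which uses only $\int_1^\infty\|\rho(\tau)\|_{L^1}\,d\tau<\infty$ and the arbitrariness of $\e$. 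A milder instance of the same issue sits in your $[0,1]$ piece, where $\int_0^1\int|y|\,|u|^{1+2/n}\,dy\,d\tau<\infty$ needs a (standard, but here unproven) local-in-time propagation of the first moment of $u$, again something the paper's argument never requires.
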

\begin{rem}
For the proof of this theorem, we note that the assumption $n\ge2$ is unnecessary. 
This restriction of the dimension $n\ge2$ will be used in Theorem~\ref{thm-new1} below. 
\end{rem}
To prove this theorem, let us introduce the following new functions $ v(x, t) $ and $ w(x, t) $:
\begin{align}
  &v(x,t) := \int_{1}^{t} {\left( d \cdot \nabla G(t-\tau) \ast \left( |G|^{\frac{2}{n}} G\right)(\tau) \right) (x)}d\tau,\label{v}\\ 
  \begin{split}\label{w}
    &w(x, t) := \int_{0}^{1} {\left( d \cdot \nabla G(t-\tau) \ast \left( |u|^{\frac{2}{n}} u\right)(\tau) \right) ( x )}d\tau \\
      &\ \ \ \ \ \ \ \ \ \ \ \ \ + \int_{1}^{t} {\left( d \cdot \nabla G(t-\tau) \ast \left( |u|^{\frac{2}{n}} u - |MG|^{\frac{2}{n}}MG\right)(\tau) \right) ( x )}d\tau. 
      \end{split}
\end{align}
By using the functions $v(x,t)$ and $w(x,t)$ above, $D_{1}(x,t)$ in \eqref{IE} can be split as follows: 
\begin{align*}
  D_{1}(x, t) &= \int_{0}^{t} {\left( d \cdot \nabla G(t-\tau) \ast \left( \left\lvert u \right\rvert ^{\frac{2}{n} } u \right)(\tau) \right) \left( x \right)}d\tau\\
  &= |M|^{\frac{2}{n} }M\int_{1}^{t} {\left( d \cdot \nabla G(t-\tau) \ast \left( |G|^{\frac{2}{n}} G\right)(\tau) \right) \left( x \right)}d\tau\\
  &\ \ \ \ +\int_{0}^{1} {\left( d \cdot \nabla G(t-\tau) \ast \left( |u|^{\frac{2}{n}} u\right)(\tau) \right) \left( x \right)}d\tau \\
  &\ \ \ \ + \int_{1}^{t} {\left( d \cdot \nabla G(t-\tau) \ast \left( |u|^{\frac{2}{n}} u - |MG|^{\frac{2}{n}}MG\right)(\tau) \right) \left( x \right)}d\tau = |M|^{\frac{2}{n} }Mv(x, t) + w(x, t).
 \end{align*}
 Then, from the above expression, we can see that the following relation holds:
  \begin{align}
    &D_{1}(x ,t)-\alpha_n|M|^{\frac{2}{n} }M(\log t)d \cdot \nabla G(x ,t) -|M|^{\frac{2}{n} }M\Psi(x ,t) - \mathcal{M}\cdot \nabla G(x ,t)\nonumber \\
    &=|M|^{\frac{2}{n} }M\left\{ v(x, t) - \alpha_n(\log t) d \cdot \nabla G(x, t) - \Psi (x, t) \right\} + \left\{ w(x, t) - \mathcal{M} \cdot \nabla G(x, t) \right\}. \label{D-split}
  \end{align}
Therefore, in order to complete the proof of Theorem~\ref{duhameleval}, we need to evaluate the first and the second terms of the above equation. 
First, let us derive the following estimate for the first term: 
\begin{prop}\label{prop1}
  Let $ n\ge1$. Then, for any $ p \in [1,\infty ]$, the following asymptotic formula holds: 
  \begin{equation}\label{v-asymp}
    \left\lVert v(t) - \alpha_n(\log t) d \cdot \nabla G(t) - \Psi (t) \right\rVert _{L^p} \le Ct^{-\frac{n}{2} \left( 1-\frac{1}{p} \right)-1}, \ \ t\ge2,
    \end{equation}
    where $ v(x,t) $, $ G(x,t) $, $ \Psi (x,t) $ and $\alpha_{n}$ are defined by \eqref{v}, \eqref{heat}, \eqref{DEF-Psi} and \eqref{alpha_n}, respectively.
\end{prop}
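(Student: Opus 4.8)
The plan is to reduce the assertion to a single short-time remainder integral and then to exploit a mean-zero cancellation. Throughout I write $|G|^{\frac{2}{n}}G=G^{q}$ with $q=1+\frac{2}{n}$, since $G>0$.

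First I would record the algebraic identity hidden in the definition \eqref{DEF-F}. Using $\frac{nq}{2}=\frac{n}{2}+1$ and $\frac{1}{2^{n+2}\pi^{\frac{n}{2}+1}}=(4\pi)^{-\frac{n}{2}-1}$, a direct computation of the two Gaussians shows that, with $\alpha_{n}$ as in \eqref{alpha_n},
\[
F(y,\tau)=G(y,\tau)^{q}-\frac{\alpha_{n}}{\tau}\,G(y,\tau),\qquad \tau>0.
\]
Since $\int_{\R^{n}}G(y,\tau)^{q}\,dy=\frac{\alpha_{n}}{\tau}$ and $\int_{\R^{n}}G(y,\tau)\,dy=1$, this immediately gives the crucial mean-zero property $\int_{\R^{n}}F(y,\tau)\,dy=0$. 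Next, by the change of variables $\sigma=ts$ together with the scaling relations $G(\lambda x,\lambda^{2}t)=\lambda^{-n}G(x,t)$ and $F(\lambda y,\lambda^{2}\tau)=\lambda^{-n-2}F(y,\tau)$ (consistent with the parabolic self-similarity of Remark~\ref{rem.self}), the profile \eqref{DEF-Psi} admits the integral representation
\[
\Psi(x,t)=\int_{0}^{t}\left(d\cdot\nabla G(t-\sigma)\ast F(\sigma)\right)(x)\,d\sigma .
\]

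With these two facts I would split the nonlinearity inside $v$. Inserting $G(\tau)^{q}=\frac{\alpha_{n}}{\tau}G(\tau)+F(\tau)$ into \eqref{v} yields
\[
v(x,t)=\alpha_{n}\int_{1}^{t}\frac{1}{\tau}\left(d\cdot\nabla G(t-\tau)\ast G(\tau)\right)(x)\,d\tau+\int_{1}^{t}\left(d\cdot\nabla G(t-\tau)\ast F(\tau)\right)(x)\,d\tau .
\]
For the first integral the semigroup identity $G(t-\tau)\ast G(\tau)=G(t)$ gives $\nabla G(t-\tau)\ast G(\tau)=\nabla G(t)$, so that term collapses to $\alpha_{n}\,d\cdot\nabla G(x,t)\int_{1}^{t}\frac{d\tau}{\tau}=\alpha_{n}(\log t)\,d\cdot\nabla G(x,t)$. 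Comparing the surviving integral with the representation of $\Psi$ then produces the exact identity
\[
v(x,t)-\alpha_{n}(\log t)\,d\cdot\nabla G(x,t)-\Psi(x,t)=-\int_{0}^{1}\left(d\cdot\nabla G(t-\tau)\ast F(\tau)\right)(x)\,d\tau,
\]
so the whole proposition reduces to bounding this short-time remainder by $Ct^{-\frac{n}{2}(1-\frac{1}{p})-1}$.

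Finally I would estimate the remainder using that $F(\tau)$ has mean zero. Writing $s=t-\tau$ and subtracting the (integrated-out) constant $\nabla G(x,s)$,
\[
\left(\nabla G(s)\ast F(\tau)\right)(x)=-\int_{\R^{n}}\int_{0}^{1}(y\cdot\nabla)\nabla G(x-\theta y,s)\,d\theta\,F(y,\tau)\,dy,
\]
so taking the $L^{p}_{x}$ norm, using translation invariance and the decay estimate \eqref{Gdecay} with $|\alpha|=2$ gives $\|\nabla G(s)\ast F(\tau)\|_{L^{p}}\le C\|\nabla^{2}G(s)\|_{L^{p}}\int_{\R^{n}}|y|\,|F(y,\tau)|\,dy\le Cs^{-\frac{n}{2}(1-\frac{1}{p})-1}\tau^{-1/2}$, where $\int_{\R^{n}}|y|\,|F(y,\tau)|\,dy=\tau^{-1/2}\int_{\R^{n}}|\eta|\,|F_{\ast}(\eta)|\,d\eta$ is finite because $F_{\ast}$ is a difference of Gaussians. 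For $t\ge2$ and $\tau\in[0,1]$ we have $s=t-\tau\ge t/2$, hence $\|\nabla G(t-\tau)\ast F(\tau)\|_{L^{p}}\le Ct^{-\frac{n}{2}(1-\frac{1}{p})-1}\tau^{-1/2}$; integrating over $\tau\in[0,1]$ and using $\int_{0}^{1}\tau^{-1/2}\,d\tau=2$ gives exactly the claimed bound. The delicate point, and the main obstacle, is precisely this last cancellation: the two pieces $\frac{\alpha_{n}}{\tau}G$ and $G^{q}$ separately have $L^{1}$ norm $\sim\tau^{-1}$, which is \emph{not} integrable at $\tau=0$, so one cannot split the $[0,1]$ integral naively. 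Keeping $F$ intact and using its mean-zero structure upgrades the singularity to the integrable order $\tau^{-1/2}$ and, by producing $\nabla^{2}G$ in place of $\nabla G$, supplies the extra order of time decay demanded by the statement.
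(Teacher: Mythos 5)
Your proof is correct and takes essentially the same route as the paper: the same key identity $G^{q}=\alpha_{n}\tau^{-1}G+F$, the same mean-zero property of $F$ combined with the mean value theorem to gain the factor $\tau^{-1/2}$ and a second derivative on $G$, and the same resulting bound. The only differences are presentational — you collapse the logarithmic term via the semigroup identity $\nabla G(t-\tau)\ast G(\tau)=\nabla G(t)$ where the paper obtains the same Duhamel representation by solving a Cauchy problem for $V=\alpha_{n}(\log t)\,d\cdot\nabla G$, and you unscale $\Psi$ so the remainder is $\int_{0}^{1}d\tau$ in original variables, whereas the paper rescales $v-V$ so the remainder is $\int_{0}^{1/t}ds$ in self-similar variables.
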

\begin{proof}
  First, we set $ V(x, t) \coloneqq \alpha _n (\log t) d \cdot \nabla G(x, t)$. Then, it follows that
  \begin{equation*}
    \begin{aligned}
      \partial _t V(x, t) &= \frac{ \alpha _n }{t}\left( d \cdot \nabla G(x,t) \right) + \alpha _n (\log t) \partial _t \left( d \cdot \nabla G(x,t) \right) \\
      &= \frac{ \alpha _n }{t} \left( d \cdot \nabla G(x,t) \right) + \alpha _n (\log t)\Delta \left( d \cdot \nabla G(x,t) \right). 
    \end{aligned}
  \end{equation*}
  Thus, we can see that $V(x, t)$ is the solution to the following Cauchy problem:
  \begin{equation}\label{CP-V}
    \begin{aligned}
      &\partial _t V - \Delta V = \frac{\alpha _n}{t} \left( d \cdot \nabla G(x, t) \right), \ \ x\in \R^{n}, \ t>1, \\
      &V(x, 1) = 0, \ \ x\in \R^{n}.
    \end{aligned}
  \end{equation}
  Applying the Duhamel principle, $ V(x,t) $ can be rewritten by
    \begin{align*}
      V(x, t) 
      &= \int_{1}^{t} {\left( d \cdot \nabla G(t-\tau) \ast \alpha _n \tau^{-1}G(\tau) \right)(x)}d\tau.
    \end{align*}
  Now, it follows from \eqref{heat} and \eqref{DEF-F} that $( |G|^{\frac{2}{n}} G - \alpha _n \tau^{-1}G )(y, \tau)=F(y, \tau)$. 
  Therefore, using the above expression and \eqref{v}, we are able to obtain 
    \begin{align}
      v(x, t) - V(x, t)
            &= \int_{1}^{t} {\left( d \cdot \nabla G(t-\tau) \ast \left( |G|^{\frac{2}{n}} G - \alpha _n \tau^{-1}G\right)(\tau) \right) (x)}d\tau \nonumber \\
      &= d \cdot \nabla \int_{1}^{t} \int_{\mathbb{R}^n} G(x-y, t-\tau) F(y, \tau)dyd\tau =: d \cdot \nabla I(x, t). \label{v-V}
    \end{align}
     
  Next, let us transform $ I(x, t) $ in the above equation \eqref{v-V}, by using the scaling argument. 
  Actually, from \eqref{heat} and \eqref{DEF-F}, we note that the following self-similar structures of $G(x, t)$ and $F(y, \tau)$ hold true:  
 \begin{equation*}\label{scal-1}
 G(x, t)=\lambda^{n}G(\lambda x, \lambda^{2}t), \ \ F(y, \tau)=\lambda^{n+2}F(\lambda y, \lambda^{2}\tau), \ \ \text{for} \ \ \lambda>0. 
 \end{equation*} 
  By virtue of the above properties, we can easily obtain 
    \begin{equation}\label{scal-2}
    G(x-y, t-\tau)=t^{-\frac{n}{2}}G\left(\frac{x-y}{\sqrt{t}}, 1-\frac{\tau}{t}\right), \ \ F(y, \tau)=t^{-\frac{n}{2}-1}F\left(\frac{y}{\sqrt{t}}, \frac{\tau}{t}\right). 
    \end{equation}
  Therefore, using the above equations and the change of variables, we obtain 
  \begin{align}
    I(x, t) &=t^{-n-1}\int_{1}^{t}\int_{\R^{n}}G\left(\frac{x-y}{\sqrt{t}}, 1-\frac{\tau}{t}\right)F\left(\frac{y}{\sqrt{t}}, \frac{\tau}{t}\right)dyd\tau \nonumber \\
        &= t^{-\frac{n}{2} } \int_{\frac{1}{t}}^{1} \int_{\mathbb{R}^n}G\left(\frac{x}{\sqrt{t}} - z, 1 - s \right) F(z, s)dzds\nonumber \\
    &= t^{-\frac{n}{2} } \int_{\frac{1}{t}}^{1} {\left( G(1-s) \ast F(s) \right) \left( \frac{x}{\sqrt{t}} \right)}ds. \label{K-henkei}
  \end{align}
  Now, recalling the definition of $\Psi(x, t)$ in \eqref{DEF-Psi}, it can be rewritten as follows:
  \begin{align}
    \Psi(x, t) &= t^{-\frac{n}{2}-\frac{1}{2}}\Psi_\ast\left( \frac{x}{\sqrt{t}} \right) = t^{-\frac{n}{2}-\frac{1}{2}} d \cdot \nabla \left( \int_{0}^{1} {\left( G(1-s) \ast F(s) \right) \left( x \right)}ds \right)\biggl|_{x=\frac{x}{\sqrt{t}}} \nonumber \\
    &=t^{-\frac{n}{2}} d \cdot \nabla \left(\int_{0}^{1} {\left( G(1-s) \ast F(s) \right) \left( \frac{x}{\sqrt{t}} \right)}ds\right). \label{Psi-t}
  \end{align}
  Thus, for any $p\in [1, \infty]$, it follows from \eqref{v-V} and \eqref{K-henkei} that 
  \begin{align}
    \left\lVert v(t) - V(t) - \Psi (t) \right\rVert _{L^p} 
    &= t^{-\frac{n}{2}} \left\lVert d \cdot \nabla \left(\int_{0}^{\frac{1}{t}} {\left( G(1-s) \ast F(s) \right) \left( \frac{\cdot }{\sqrt{t}} \right)}ds\right)\right\rVert _{L^p} \nonumber \\
    &= t^{-\frac{n}{2}\left(1-\frac{1}{p}\right)-\frac{1}{2}} \left\lVert d \cdot \nabla \int_{0}^{\frac{1}{t}} {G(1-s) \ast F(s)}ds\right\rVert _{L^p}. \label{mainpart}
  \end{align}
  
  In what follows, we shall evaluate the right hand side of the above \eqref{mainpart}. 
  Now, noticing $\int_{\mathbb{R}^n}{F_\ast \left( z \right)}dz = 0 $. Therefore, using the mean value theorem, \eqref{scal-2}, the Schwarz inequality, the Young inequality and \eqref{Gdecay}, we obtain 
  \begin{align}
    &\left\lVert d \cdot \nabla \int_{0}^{\frac{1}{t}} {G(1-s) \ast F(s)}ds\right\rVert _{L^p} \nonumber \\
    &\le \int_{0}^{\frac{1}{t}} {\left\lVert d \cdot \nabla \int_{\mathbb{R}^n}{G(\cdot - y, 1-s)s^{-\frac{n}{2}-1} F_\ast \left( \frac{y}{\sqrt{s}} \right)}dy  \right\rVert _{L^p}}ds \nonumber\\
    &= \int_{0}^{\frac{1}{t}} {s^{-\frac{n}{2}-1}\left\lVert d \cdot \nabla \int_{\mathbb{R}^n}^{} {\left(\int_{0}^{1} {y \cdot \nabla G(\cdot-\theta y, 1-s)}d\theta\right)F_\ast \left( \frac{y}{\sqrt{s}} \right)}dy \right\rVert _{L^p}}ds \nonumber\\
    &= \int_{0}^{\frac{1}{t}} {s^{-\frac{n}{2}-1}\left\lVert\int_{\mathbb{R}^n}^{} {\left(\int_{0}^{1} {d \cdot \left(D^2 G(\cdot-\theta y, 1-s)y\right)}d\theta\right)} F_\ast \left( \frac{y}{\sqrt{s}} \right)dy \right\rVert _{L^p}} ds \nonumber\\
    &\le \left\lvert d \right\rvert\int_{0}^{\frac{1}{t}} {s^{-\frac{n}{2}-\frac{1}{2} }\int_{0}^{1}\left\lVert\ {\frac{1}{\theta ^n} \int_{\mathbb{R}^n}^{} {\left\lvert D^2 G\left( \frac{\cdot}{\theta} - y, \frac{1-s}{\theta ^2} \right) \right\rvert \frac{\left\lvert y \right\rvert}{\sqrt{s} }  \left\lvert F_\ast \left( \frac{y}{\sqrt{s}} \right) \right\rvert} \,dy}\right\rVert _{L^p}}d\theta ds \nonumber \\
    &\leq C\int_{0}^{\frac{1}{t}} {s^{-\frac{n}{2}-\frac{1}{2} }\int_{0}^{1} {\theta^{-n\left( 1-\frac{1}{p}  \right)-2} (1-s)^{-\frac{n}{2} \left( 1-\frac{1}{p} \right) -1}\theta^{n\left( 1-\frac{1}{p} \right)+2}s^{\frac{n}{2}} \left\lVert |\cdot|F _\ast \right\rVert _{L^1}}d\theta}ds \nonumber\\
    &\leq C\left\lVert |\cdot|F _\ast \right\rVert _{L^1}\int_{0}^{\frac{1}{t}} {s^{-\frac{1}{2} }(1-s)^{-\frac{n}{2} \left( 1-\frac{1}{p} \right) -1}}ds 
    \leq C\left\lVert |\cdot|F _\ast \right\rVert _{L^1}t^{-\frac{1}{2}}, \ \ t\ge2, \label{dag-est}
  \end{align}
    for any $p\in [1, \infty]$. Finally, combining \eqref{mainpart} and \eqref{dag-est}, we can conclude that the desired estimate \eqref{v-asymp} is true. This completes the proof. 
\end{proof}

Next, we would like to treat the second term in the right hand side of \eqref{D-split}. In order to do that, we prepare the following lemma: 
\begin{lemma}\label{lemma}
Let $n\ge1$ and $ q=1+\frac{2}{n} $. Suppose $ u_0 \in L^{1}( \mathbb{R}^n; 1+|x| )\cap L^{\infty }( \mathbb{R}^n )$. 
Then, for any $ p \in [1, \infty ] $, the solution $u(x, t)$ to \eqref{CD} satisfies the following estimate: 
  \begin{equation}\label{uginequality}
    \left\lVert \left( |u|^{\frac{2}{n}}u \right)(t) - \left( |MG|^{\frac{2}{n}}MG \right)(t) \right\rVert _{L^p} \leq Ct^{-\frac{n}{2}\left( 1-\frac{1}{p} \right)-\frac{3}{2}}\log(2+t), \ \ t\ge1, 
  \end{equation}
  where $G(x, t)$ and $M$ are defined by \eqref{heat}. 
\end{lemma}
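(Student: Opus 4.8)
The plan is to reduce the estimate to a pointwise bound for the nonlinearity and then distribute the $L^{p}$ norm by a simple H\"older splitting. Writing $g(s)=|s|^{\frac{2}{n}}s$, which is $C^{1}$ with $g'(s)=\left(1+\frac{2}{n}\right)|s|^{\frac{2}{n}}$ because $\frac{2}{n}>0$, the mean value theorem applied along the segment joining $MG$ to $u$ yields the elementary inequality
\begin{equation*}
\left| \left(|u|^{\frac{2}{n}}u\right)(x,t)-\left(|MG|^{\frac{2}{n}}MG\right)(x,t)\right| \le C\left(|u(x,t)|^{\frac{2}{n}}+|MG(x,t)|^{\frac{2}{n}}\right)\left|u(x,t)-MG(x,t)\right|,
\end{equation*}
valid for all $x\in\R^{n}$ and $t>0$. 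I would record this first and then take the $L^{p}$ norm of both sides.

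Next I would estimate the right-hand side by H\"older's inequality in the form $\|fh\|_{L^{p}}\le \|f\|_{L^{\infty}}\|h\|_{L^{p}}$, with $f=|u|^{\frac{2}{n}}+|MG|^{\frac{2}{n}}$ and $h=|u-MG|$; this choice is admissible for every $p\in[1,\infty]$ simultaneously, which conveniently avoids any case distinction in $p$. For the first factor, the decay estimate \eqref{CDdecay} with $p=\infty$ gives $\|u(t)\|_{L^{\infty}}\le Ct^{-\frac{n}{2}}$, and since $\|MG(t)\|_{L^{\infty}}=|M|(4\pi t)^{-\frac{n}{2}}$, raising to the power $\frac{2}{n}$ produces
\begin{equation*}
\left\| |u(t)|^{\frac{2}{n}}+|MG(t)|^{\frac{2}{n}}\right\|_{L^{\infty}}\le C\,t^{-1}, \ \ t\ge1.
\end{equation*}
The key numerical point is precisely that the exponent $\frac{2}{n}$ converts the sup-norm decay rate $t^{-n/2}$ into the gain $t^{-1}$. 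For the second factor, the critical case $q=1+\frac{2}{n}$ of Proposition~\ref{CDasympthm} (i.e.\ the middle line of \eqref{CDasymp}) gives $\|u(t)-MG(t)\|_{L^{p}}\le Ct^{-\frac{n}{2}(1-\frac{1}{p})-\frac{1}{2}}\log(2+t)$ for $t\ge1$.

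Multiplying the two bounds yields exactly
\begin{equation*}
\left\| \left(|u|^{\frac{2}{n}}u\right)(t)-\left(|MG|^{\frac{2}{n}}MG\right)(t)\right\|_{L^{p}}\le C\,t^{-1}\cdot t^{-\frac{n}{2}\left(1-\frac{1}{p}\right)-\frac{1}{2}}\log(2+t)=Ct^{-\frac{n}{2}\left(1-\frac{1}{p}\right)-\frac{3}{2}}\log(2+t),
\end{equation*}
which is the claimed estimate \eqref{uginequality}, the logarithm being inherited solely from the first-order expansion of $u-MG$. I do not expect a genuine obstacle here: once the pointwise inequality and the splitting $\|fh\|_{L^{p}}\le\|f\|_{L^{\infty}}\|h\|_{L^{p}}$ are in place, the argument is mechanical. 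The only points requiring care are verifying the pointwise inequality for the non-smooth power nonlinearity (handled by the $C^{1}$ regularity of $g$ for $\frac{2}{n}>0$) and checking that the endpoint $p=\infty$ is covered uniformly, which it is because the power factor is always measured in $L^{\infty}$.
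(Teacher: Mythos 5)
Your proposal is correct and follows essentially the same route as the paper's proof: the pointwise inequality $\bigl| |u|^{\frac{2}{n}}u - |MG|^{\frac{2}{n}}MG \bigr| \le C\bigl(|u|^{\frac{2}{n}}+|MG|^{\frac{2}{n}}\bigr)|u-MG|$ (which the paper simply quotes as a standard inequality, while you justify it via the mean value theorem), followed by placing the power factor in $L^{\infty}$ via \eqref{CDdecay} and the difference in $L^{p}$ via the critical case of Proposition~\ref{CDasympthm}. The computation and the resulting rate $t^{-\frac{n}{2}\left(1-\frac{1}{p}\right)-\frac{3}{2}}\log(2+t)$ match the paper exactly.
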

\begin{proof}
  First, we recall that the following inequality holds:
  \begin{equation*}
    \left\lvert |\alpha|^{q-1}\alpha - |\beta|^{q-1}\beta \right\rvert \leq C\left( |\alpha|^{q-1} + |\beta|^{q-1} \right) \left\lvert \alpha - \beta \right\rvert, \ \  \alpha,\beta \in \mathbb{R},\  q>1. 
  \end{equation*}
  Therefore, it follows from \eqref{CDdecay}, \eqref{CDasymp} and \eqref{Gdecay} that 
  \begin{align*}
    &\left\lVert \left( |u|^{\frac{2}{n}}u \right)(t) - \left( |MG|^{\frac{2}{n}}MG \right)(t) \right\rVert _{L^p} \\
    &\leq C \left( \left\lVert u(t) \right\rVert _{L^\infty}^{\frac{2}{n}} + \left\lVert MG(t) \right\rVert _{L^\infty}^{\frac{2}{n}} \right)\left\lVert u(t) - MG(t) \right\rVert _{L^p}\\
    &\leq C \left( \left( t^{-\frac{n}{2}} \right)^{\frac{2}{n}} + \left( |M|t^{-\frac{n}{2}} \right)^{\frac{2}{n}} \right) t^{-\frac{n}{2}\left( 1-\frac{1}{p} \right)-\frac{1}{2}}\log (2+t) \\
    &\leq Ct^{-\frac{n}{2}\left( 1-\frac{1}{p} \right)-\frac{3}{2}}\log (2+t), \ \ t\ge1.
  \end{align*}
  This completes the proof.
\end{proof}

In what follows, let us deal with the second term in the right hand side of \eqref{D-split}, i.e., we would like to introduce the asymptotic profile for $ w(x, t) $. 
Actually, the following formula can be established: 
\begin{prop}\label{prop2}
Let $n\ge1$ and $ q=1+\frac{2}{n} $. Suppose $ u_0 \in L^{1}( \mathbb{R}^n; 1+|x| )\cap L^{\infty }( \mathbb{R}^n )$. 
Then, for any $ p \in [1, \infty ] $, the following asymptotic formula holds: 
   \begin{equation}\label{w-asymp}
    \lim_{t \to \infty} t^{\frac{n}{2} \left( 1-\frac{1}{p} \right)+\frac{1}{2}} \left\lVert w(t) - \mathcal{M}\cdot \nabla G(t) \right\rVert _{L^p} = 0,
   \end{equation}
   where $ w(x,t) $, $ G(x,t) $ and $ \mathcal{M} $ are defined by \eqref{w}, \eqref{heat} and \eqref{DEF-mathM}, respectively.
\end{prop}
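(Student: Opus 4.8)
The plan is to regard $w$ as a Duhamel term driven by a space-time integrable source and to show that its first-order asymptotics is governed by the total ``charge'' against $d\cdot\nabla G$. Writing the source as $g(y,\tau):=(|u|^{\frac2n}u)(y,\tau)$ for $0<\tau<1$ and $g(y,\tau):=(|u|^{\frac2n}u-|MG|^{\frac2n}MG)(y,\tau)$ for $\tau\ge1$, we have $w(x,t)=\int_0^t\bigl(d\cdot\nabla G(t-\tau)*g(\tau)\bigr)(x)\,d\tau$, and by \eqref{DEF-mathM}, $\mathcal M\cdot\nabla G(x,t)=\bigl(d\cdot\nabla G\bigr)(x,t)\int_0^\infty\!\int_{\R^n}g\,dy\,d\tau$. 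First I would note that $g$ is space-time integrable: on $(0,1)$ this follows from the local bounds and $u_0\in L^1\cap L^\infty$, while on $(1,\infty)$ Lemma~\ref{lemma} with $p=1$ gives $\|g(\tau)\|_{L^1}\le C\tau^{-3/2}\log(2+\tau)$, so $\mathcal M$ is well defined. Since $\int_0^\infty\!\int g=\int_0^t\!\int g+\int_t^\infty\!\int g$ for $t>1$, I would decompose
\[
w-\mathcal M\cdot\nabla G=\int_0^t E(\cdot,\tau)\,d\tau-\Bigl(\int_t^\infty\!\!\int_{\R^n}g\,dy\,d\tau\Bigr)d\cdot\nabla G(\cdot,t),
\]
where $E(x,\tau):=\int_{\R^n}\bigl[(d\cdot\nabla G)(x-y,t-\tau)-(d\cdot\nabla G)(x,t)\bigr]g(y,\tau)\,dy$. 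The tail term is immediately acceptable: $\int_t^\infty\!\int|g|\le C\int_t^\infty\tau^{-3/2}\log(2+\tau)\,d\tau=O(t^{-1/2}\log(2+t))$, and multiplying by $\|d\cdot\nabla G(t)\|_{L^p}\le Ct^{-\frac n2(1-\frac1p)-\frac12}$ from \eqref{Gdecay} yields $o(t^{-\frac n2(1-\frac1p)-\frac12})$.

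For the main term I would split $\int_0^t E\,d\tau$ at $t/2$. On $[t/2,t]$ I would not compare with the profile but bound the two pieces of $E$ directly, using Young's inequality $\|d\cdot\nabla G(t-\tau)*g(\tau)\|_{L^p}\le C(t-\tau)^{-\frac12}\|g(\tau)\|_{L^p}$ together with the pointwise bound $|\int g(\tau)|\,\|d\cdot\nabla G(t)\|_{L^p}$, and then inserting $\|g(\tau)\|_{L^p}\le C\tau^{-\frac n2(1-\frac1p)-\frac32}\log(2+\tau)$ from Lemma~\ref{lemma}. Since $\tau\ge t/2$ here, the $\tau$-powers may be frozen at $t$, and $\int_{t/2}^t(t-\tau)^{-\frac12}\,d\tau=O(t^{1/2})$ supplies exactly the extra $t^{-1/2}\log(2+t)$ gain needed to reach $o(t^{-\frac n2(1-\frac1p)-\frac12})$.

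On $[0,t/2]$ we have $t-\tau\ge t/2$, and I would apply the mean value theorem to split
\[
(d\cdot\nabla G)(x-y,t-\tau)-(d\cdot\nabla G)(x,t)=\bigl[(d\cdot\nabla G)(x-y,t-\tau)-(d\cdot\nabla G)(x,t-\tau)\bigr]+\bigl[(d\cdot\nabla G)(x,t-\tau)-(d\cdot\nabla G)(x,t)\bigr].
\]
The temporal shift is controlled, via $\p_tG=\Delta G$ and $\sigma\ge t/2$, by $C\tau\,\|\Delta(d\cdot\nabla G)(\sigma)\|_{L^p}\le C\tau\,t^{-\frac n2(1-\frac1p)-\frac32}$; since $\int_0^{t/2}\tau\|g(\tau)\|_{L^1}\,d\tau=O(t^{1/2}\log(2+t))$, this contributes $o(t^{-\frac n2(1-\frac1p)-\frac12})$. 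The spatial shift is bounded by $C|y|\,\|D^2G(t-\tau)\|_{L^p}\le C|y|\,t^{-\frac n2(1-\frac1p)-1}$, giving the estimate $Ct^{-\frac n2(1-\frac1p)-1}\int_0^{t/2}\!\int_{\R^n}|y|\,|g(y,\tau)|\,dy\,d\tau$.

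I expect the genuine obstacle to be showing that this last spatial-moment integral is $o(t^{1/2})$. Over $(0,1)$ it is finite thanks to $u_0\in L^1(\R^n;1+|x|)\cap L^\infty$. Over $[1,t/2]$, bounding the first moments of $|u|^{\frac2n}u$ and $|MG|^{\frac2n}MG$ separately gives only $O(\tau^{-1/2})$, whose integral is the borderline $O(t^{1/2})$ and is \emph{not} enough; the cancellation in $g=|u|^{\frac2n}u-|MG|^{\frac2n}MG$ must be used. Via the inequality $\bigl||a|^{q-1}a-|b|^{q-1}b\bigr|\le C(|a|^{q-1}+|b|^{q-1})|a-b|$ and H\"older's inequality one gets
\[
\int_{\R^n}|y|\,|g(y,\tau)|\,dy\le C\bigl(\|u(\tau)\|_{L^\infty}^{\frac2n}+\|MG(\tau)\|_{L^\infty}^{\frac2n}\bigr)\bigl\|(u-MG)(\tau)\bigr\|_{L^1(\R^n;1+|x|)}\le C\tau^{-1}\bigl\|(u-MG)(\tau)\bigr\|_{L^1(\R^n;1+|x|)},
\]
so what is really required is a \emph{weighted} decay estimate $\|(u-MG)(\tau)\|_{L^1(\R^n;1+|x|)}=o(\tau^{1/2})$ (one expects in fact $O(\log(2+\tau))$). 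Propagating the $1+|x|$ weight onto $u-MG$ is the delicate point; granted it, $\int_1^{t/2}\!\int|y|\,|g|\,d\tau=o(t^{1/2})$, the spatial-shift contribution is $o(t^{-\frac n2(1-\frac1p)-\frac12})$, and collecting all pieces establishes \eqref{w-asymp}.
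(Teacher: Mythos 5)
There is a genuine gap, and it is exactly the one you flag yourself: your argument reduces Proposition \ref{prop2} to the weighted decay estimate $\|(u-MG)(\tau)\|_{L^1(\mathbb{R}^n;1+|x|)}=o(\tau^{1/2})$, which you do not prove and which is not available from anything cited in the paper (Proposition \ref{CDasympthm} and Lemma \ref{lemma} are purely unweighted). Establishing such a moment bound on $u-MG$ would require propagating the weight $1+|x|$ through the Duhamel formula and through the nonlinear and variable-diffusion terms -- essentially a weighted rerun of the whole asymptotic analysis -- so your proposal, as written, is a conditional argument rather than a proof. The other pieces of your decomposition (the tail term $\int_t^\infty$, the Young-inequality treatment of $[t/2,t]$, and the temporal-shift term on $[0,t/2]$) are correct and closely parallel the paper's estimates for $Y$, $X_1$, and \eqref{X3part1}.

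The idea you are missing is how the paper kills the spatial-shift term \emph{without} any moment information on $\rho=|u|^{2/n}u-|MG|^{2/n}MG$. Instead of splitting time at $t/2$ and letting $|y|$ range over all of $\mathbb{R}^n$, the paper fixes $\varepsilon\in(0,1)$, restricts the dangerous region to $1\le\tau\le\varepsilon t/2$, and then splits \emph{space} at $|y|=\varepsilon\sqrt t$. In the inner region $|y|\le\varepsilon\sqrt t$ the mean value theorem gives
\begin{equation*}
\bigl\lVert d\cdot\nabla\bigl(G(\cdot-y,t-\tau)-G(\cdot,t)\bigr)\bigr\rVert_{L^p}\le C\varepsilon\, t^{-\frac n2\left(1-\frac1p\right)-\frac12},
\end{equation*}
i.e.\ the factor $|y|$ is simply bounded by $\varepsilon\sqrt t$, so only the unweighted space-time integrability $\int_1^\infty\!\int_{\mathbb{R}^n}|\rho|\,dy\,d\tau<\infty$ (from Lemma \ref{lemma} with $p=1$) is needed, and the whole contribution is $\le C\varepsilon\, t^{-\frac n2(1-\frac1p)-\frac12}$. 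In the outer region $|y|\ge\varepsilon\sqrt t$ no smallness of the kernel difference is used at all: the mass $\int_1^{\varepsilon t/2}\int_{|y|\ge\varepsilon\sqrt t}|\rho|\,dy\,d\tau$ tends to $0$ by the dominated convergence theorem, again using only $\rho\in L^1(\mathbb{R}^n\times(1,\infty))$. Letting $t\to\infty$ and then $\varepsilon\to0$ closes the proof. This $\varepsilon$-truncation of the spatial variable is precisely the device that trades your unproven weighted estimate for the absolute integrability that is already in hand; if you want to keep your $t/2$-splitting, you would need to supply the weighted lemma yourself, which is a substantially harder task than the proposition you are trying to prove.
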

\begin{proof}
  First, for simplicity, let us define $ \rho(x, t) $, $ \mathcal{M}_0 $ and $ \mathcal{M}_1 $ as follows: 
  \begin{align}
    &\rho(x, t) \coloneqq \left( |u|^{\frac{2}{n}}u - |MG|^{\frac{2}{n}}MG \right)(x, t), \label{rho}\\
     &\mathcal{M}_0 \coloneqq d\int_{0}^{1} {\int_{\mathbb{R}^n}^{} {\left( |u|^\frac{2}{n} u \right)(y, \tau)}dy}d\tau, \ \  \mathcal{M}_1 \coloneqq d\int_{1}^{\infty } {\int_{\mathbb{R}^n}^{} {\rho(y ,\tau)}dy }d\tau. \label{M0M1}
  \end{align}
  Using the definitions of $\rho(x, t)$ in \eqref{rho}, $ \mathcal{M}_0 $ and $ \mathcal{M}_1 $ in \eqref{M0M1}, and $ \mathcal{M} $ in \eqref{DEF-mathM}, we have
  \begin{equation*}
    \mathcal{M} = d\int_{0}^{1} {\int_{\mathbb{R}^n} {\left( |u|^\frac{2}{n} u \right)(y, \tau)}dy}d\tau + d\int_{1}^{\infty } {\int_{\mathbb{R}^n} {\rho(y ,\tau)}dy }d\tau = \mathcal{M}_0 + \mathcal{M}_1.
  \end{equation*}
  Then, it follows from the definition of $ w(x,t) $ in \eqref{w} that 
    \begin{align}
     &w(x, t) - \mathcal{M} \cdot \nabla G(x, t) \nonumber \\
     &= w(x, t) - \mathcal{M}_0 \cdot \nabla G(x, t) - \mathcal{M}_1 \cdot \nabla G(x, t) \nonumber \\
     &=\left\lbrace \int_{1}^{t} {\left( d \cdot \nabla G(t-\tau) \ast \rho(\tau) \right) \left( x \right)}d\tau - \mathcal{M}_1\cdot \nabla G(x, t) \right\rbrace \nonumber \\
     &\ \ \ \ +\left\lbrace \int_{0}^{1} {\left( d \cdot \nabla G(t-\tau) \ast \left( |u|^{\frac{2}{n}} u\right)(\tau) \right) \left( x \right)}d\tau - \mathcal{M}_0 \cdot \nabla G(x, t)\right\rbrace. \label{w-split}
    \end{align}
  Therefore, in order to prove \eqref{w-asymp}, it is sufficient to derive the following asymptotic formulas:
  \begin{align}
    &\lim_{t \to \infty} t^{\frac{n}{2} \left( 1-\frac{1}{p} \right)+\frac{1}{2}} \left\lVert \int_{1}^{t} { d \cdot \nabla G(t-\tau) \ast \rho(\tau) } d\tau - \mathcal{M}_1\cdot \nabla G(t)\right\rVert _{L^p} = 0,\label{prop3-1}\\
    &\lim_{t \to \infty} t^{\frac{n}{2} \left( 1-\frac{1}{p} \right)+\frac{1}{2}} \left\lVert\int_{0}^{1} {d \cdot \nabla G(t-\tau) \ast \left( |u|^{\frac{2}{n}} u\right)(\tau) } d\tau - \mathcal{M}_0\cdot \nabla G(t)\right\rVert _{L^p} = 0, \label{prop3-2}
  \end{align}
  for any $p\in [1, \infty]$. First, we shall prove \eqref{prop3-1}. From the definition of $\mathcal{M}_{1}$ in \eqref{M0M1}, we have
  \begin{align}
    &\int_{1}^{t} { \left( d \cdot \nabla G(t-\tau) \ast \rho(\tau) \right) \left( x \right) }d\tau - \mathcal{M}_1\cdot \nabla G(x, t)\nonumber\\
    &= \int_{1}^{t} {\int_{\mathbb{R}^n} {d \cdot \nabla G(x-y, t-\tau) \rho(y, \tau)}dy }d\tau - d \cdot \nabla G(x, t)\int_{1}^{\infty } {\int_{\mathbb{R}^n} {\rho(y ,\tau)} dy } d\tau\nonumber\\
    &= \int_{1}^{t} {\int_{\mathbb{R}^n}{d \cdot \nabla (G(x-y, t-\tau) - G(x, t)) \rho(y, \tau)} dy } d\tau - d \cdot \nabla G(x, t)\int_{t}^{\infty } {\int_{\mathbb{R}^n} {\rho(y ,\tau)}dy }d\tau\nonumber\\
    &\eqqcolon X(x, t) + Y(x, t).\label{xy-split}
   \end{align}
   In addition, let us take $ \varepsilon \in (0, 1)$ and split the integral of $X(x, t)$ as follows: 
   \begin{align}
    X(x, t) 
    &= \int_{\frac{\varepsilon t}{2}}^{t} {\int_{\mathbb{R}^n}{d \cdot \nabla (G(x-y, t-\tau) - G(x, t)) \rho(y, \tau)}dy }d\tau\nonumber\\ 
    &\ \ \ \ + \int_{1}^{\frac{\varepsilon t}{2}} {\left( \int_{|y| \ge \varepsilon \sqrt{t}} +\int_{|y| \le \varepsilon \sqrt{t}} \right) {d \cdot \nabla (G(x-y, t-\tau) - G(x, t)) \rho(y, \tau)}dy }d\tau\nonumber\\
    &\eqqcolon X_1(x, t) + X_2(x, t) + X_3(x, t).\label{x-split}
   \end{align}
   
   In what follows, we would like to evaluate $ X_1(x, t) $, $ X_2(x, t) $ and $ X_3(x, t) $. Here and later, let $p\in [1, \infty]$. 
   First, from the definition of $ \rho(y, \tau) $ in \eqref{rho} and Lemma~\ref{lemma}, we obtain
   \begin{equation}\label{rho-eval}
    \left\lVert \rho(\tau) \right\rVert _{L^p} = \left\lVert \left( |u|^{\frac{2}{n}}u \right)(\tau) - \left( |MG|^{\frac{2}{n}}MG \right)(\tau) \right\rVert _{L^p} \leq C\tau^{-\frac{n}{2}\left( 1-\frac{1}{p} \right)-\frac{3}{2}}\log (2+\tau), \ \tau \ge1.
   \end{equation}
   Now, we shall treat $X_{1}(x, t)$. From the Young inequality, \eqref{Gdecay} and \eqref{rho-eval}, we have
   \begin{align}
    \left\lVert X_1(t) \right\rVert _{L^p}
     &\leq |d|\int_{\frac{\varepsilon t}{2}}^{t} {\left\lVert\nabla G(t-\tau) \right\rVert _{L^1}} \left\lVert \rho(\tau) \right\rVert _{L^p}d\tau + |d|\left\lVert\nabla G(t) \right\rVert _{L^p}\int_{\frac{\varepsilon t}{2}}^{t} {\left\lVert \rho(\tau) \right\rVert _{L^1}} d\tau\nonumber\\
     &\leq C\int_{\frac{\varepsilon t}{2}}^{t} {(t-\tau)^{-\frac{1}{2}}\tau^{-\frac{n}{2}\left( 1-\frac{1}{p}\right)-\frac{3}{2}}\log (2+\tau)}d\tau + Ct^{-\frac{n}{2}\left( 1-\frac{1}{p}\right)-\frac{1}{2}}\int_{\frac{\varepsilon t}{2}}^{t} {\tau^{-\frac{3}{2}}\log (2+\tau)} d\tau\nonumber\\
     &\leq C\left( \varepsilon^{-\frac{n}{2}\left( 1-\frac{1}{p} \right)-\frac{3}{2}}+ \varepsilon^{-\frac{3}{2}} \right)t ^{-\frac{n}{2}\left( 1-\frac{1}{p} \right)-1} \log (2+t), \ \ t > \frac{2}{\varepsilon}.  \label{X1}
   \end{align}
   Next, let us deal with $ X_2(x, t) $. From \eqref{Gdecay}, we can easily obtain 
   \begin{align}
    \left\lVert X_2(t) \right\rVert _{L^p} 
    &\le |d|\int_{1}^{\frac{\varepsilon t}{2}} {(\left\lVert\nabla G(t-\tau) \right\rVert _{L^p} + \left\lVert\nabla G(t)  \right\rVert _{L^p})\int_{|y| \ge \varepsilon \sqrt{t}}^{} {\left\lvert \rho(y, \tau) \right\rvert} dy } d\tau\nonumber\\\
    &\leq C\int_{1}^{\frac{\varepsilon t}{2}}\left( (t-\tau)^{-\frac{n}{2}\left( 1-\frac{1}{p} \right)-\frac{1}{2}} + t^{-\frac{n}{2}\left( 1-\frac{1}{p} \right)-\frac{1}{2}} \right) {\int_{|y| \ge \varepsilon \sqrt{t}}^{} {\left\lvert \rho(y, \tau) \right\rvert} dy } d\tau\nonumber\\\
    &\le C t^{-\frac{n}{2}\left( 1-\frac{1}{p} \right)-\frac{1}{2}} z(t), \ \ t>\frac{2}{\varepsilon}, \label{X2}
 \end{align}
   where the function $ z(t) $ is defined by
   \begin{equation*}
    z(t) \coloneqq \int_{1}^{\frac{\varepsilon t}{2}} {\int_{|y| \ge \varepsilon \sqrt{t}}^{} {|\rho(y, \tau)|} dy } d\tau = \int_{1}^{\infty } {\chi_{\left[ 1, \frac{\varepsilon t}{2} \right]}(\tau)\int_{|y| \ge \varepsilon \sqrt{t}} {|\rho(y, \tau)|} dy } d\tau. 
  \end{equation*}
Moreover, it follows from \eqref{rho-eval} that $  \int_{1}^{\infty}\int_{\R^{n}}{|\rho(y, \tau)|}dyd\tau<\infty$. 
Therefore, from the Lebesgue dominated convergence theorem, we obtain
  \begin{align}
    \lim_{t \to \infty}z(t) &= \int_{1}^{\infty} {\lim_{t \to \infty}{\left( \chi_{\left[ 1, \frac{\varepsilon t}{2} \right]}(\tau)\int_{|y| \ge \varepsilon \sqrt{t}}^{} {|\rho(y, \tau)|}dy  \right)} d\tau} = 0. \label{zlim}
  \end{align}
  
  Finally, we would like to evaluate $ X_3(x, t) $. First, recalling $ \partial_t G = \Delta G $ and using \eqref{Gdecay}, the following estimate holds for $ 1 \le \tau \le \frac{\varepsilon t}{2} $:
    \begin{align}
     &\left\lVert d \cdot \nabla \left( G(t-\tau)-G(t) \right)\right\rVert _{L^p} \nonumber \\
     &= \tau \left\lVert d \cdot \nabla \int_{0}^{1} {\partial_t G(t-\theta\tau)} d\theta \right\rVert _{L^p} \leq \tau\int_{0}^{1} {\left\lVert d \cdot \nabla \Delta G(t-\theta\tau) \right\rVert _{L^p}} d\theta \nonumber \\
     &\leq C\tau \int_{0}^{1} {(t-\theta\tau)^{-\frac{n}{2}\left( 1-\frac{1}{p} \right)-\frac{3}{2}}} d\theta 
     \leq C\varepsilon t^{-\frac{n}{2}\left( 1-\frac{1}{p} \right)-\frac{1}{2}}. \label{X3part1}
    \end{align}
   Moreover, under this situation, if we also assume $ |y| \leq \varepsilon \sqrt{t} $, the following estimate holds too: 
   \begin{align}
    &\left\lVert d \cdot \nabla \left( G(\cdot-y, t-\tau)-G(\cdot, t-\tau) \right)\right\rVert _{L^p} 
    \leq |d||y|\int_{0}^{1} {\left\lVert D^2G(\cdot-\theta y, t-\tau) \right\rVert _{L^p}} d\theta     \nonumber \\
    &\leq C\varepsilon \sqrt{t}(t-\tau)^{-\frac{n}{2}\left( 1-\frac{1}{p} \right)-1} 
    \leq C\varepsilon t^{-\frac{n}{2}\left( 1-\frac{1}{p} \right)-\frac{1}{2}}\label{X3part2}.
   \end{align}
   Thus, from \eqref{X3part1} and \eqref{X3part2}, we obtain 
    \begin{align}
     &\left\lVert d \cdot \nabla \left( G(\cdot-y, t-\tau)-G(\cdot, t) \right)\right\rVert _{L^p} \nonumber \\
     &\leq \left\lVert d \cdot \nabla \left( G(\cdot-y, t-\tau)-G(\cdot, t-\tau) \right)\right\rVert _{L^p} + \left\lVert d \cdot \nabla \left( G(\cdot, t-\tau)-G(\cdot, t) \right)\right\rVert _{L^p} \nonumber\\
     &\le C\varepsilon t^{-\frac{n}{2}\left( 1-\frac{1}{p} \right)-\frac{1}{2}}, \ \ |y| \le \varepsilon \sqrt{t}, \ 1 \le \tau \le \frac{\varepsilon t}{2}. \label{Gdiff}
    \end{align}
    Therefore, combining the fact $  \int_{1}^{\infty}\int_{\R^{n}}{|\rho(y, \tau)|}dyd\tau<\infty$ and \eqref{Gdiff}, we are able to see that 
    \begin{align}
     \left\lVert X_3(t) \right\rVert _{L^p} &\leq \int_{1}^{\frac{\varepsilon t}{2}} {\int_{|y| \leq \varepsilon \sqrt{t}}^{} {\left\lVert d \cdot \nabla (G(\cdot-y, t-\tau) - G(\cdot, t)) \right\rVert _{L^p}}|\rho(y, \tau)| dy } d\tau \nonumber \\
     &\leq C\varepsilon t^{-\frac{n}{2}\left( 1-\frac{1}{p} \right)-\frac{1}{2}} \int_{1}^{\frac{\varepsilon t}{2}} {\int_{|y| \leq \varepsilon \sqrt{t}}^{} {|\rho(y, \tau)|} dy }d\tau \leq C\varepsilon t^{-\frac{n}{2}\left( 1-\frac{1}{p} \right)-\frac{1}{2}}, \ \ t > \frac{2}{\varepsilon}. \label{X3}
    \end{align}
   
   Next, let us evaluate $ Y(x, t) $. It follows from \eqref{Gdecay} and \eqref{rho-eval} that 
    \begin{align}
     \left\lVert Y(t) \right\rVert _{L^p} 
     &\leq |d|\left\lVert \nabla G(t) \right\rVert _{L^p}\int_{t}^{\infty } {\left\lVert \rho(\tau) \right\rVert _{L^1}} d\tau 
      \le Ct^{-\frac{n}{2}\left(1-\frac{1}{p}\right)-\frac{1}{2}}\int_{t}^{\infty}\tau^{-\frac{3}{2}}\log (2+\tau) d\tau \nonumber \\
      &=Ct^{-\frac{n}{2}\left(1-\frac{1}{p}\right)-\frac{1}{2}}\left( -2\left[ \tau^{-\frac{1}{2}}\log (2+\tau) \right]_{t}^{\infty} + 2 \int_{t}^{\infty} {\tau^{-\frac{1}{2}}(2+\tau)^{-1}} d\tau \right) \nonumber \\
     &\leq Ct ^{-\frac{n}{2}\left( 1-\frac{1}{p} \right)-\frac{1}{2}}\cdot Ct^{-\frac{1}{2}} \left\{\log (2+t) + 1\right\} 
     \leq Ct ^{-\frac{n}{2}\left( 1-\frac{1}{p} \right)-1}\log (2+t), \ \ t>1. \label{Y}
    \end{align}
   
   Summarizing up \eqref{xy-split}, \eqref{x-split}, \eqref{X1}, \eqref{X2}, \eqref{zlim}, \eqref{X3} and \eqref{Y},  
   we eventually obtain the following estimate:
   \begin{equation*}
    \limsup _{t \to \infty}{t^{\frac{n}{2}\left( 1-\frac{1}{p} \right)+\frac{1}{2}}\left\lVert \int_{1}^{t} {d \cdot \nabla G(t-\tau) \ast \rho(\tau) }d\tau - \mathcal{M}_1\cdot \nabla G(t) \right\rVert _{L^p}} \le C\varepsilon.
  \end{equation*}
  Thus, we obtain \eqref{prop3-1} because $ \varepsilon>0 $ can be chosen arbitrarily small. 
  Here, we remark that \eqref{prop3-2} can be shown by the similar way. Actually, by virtue of the regularity property of the solution $u \in C([0, \infty); L^1(\mathbb{R}^n))  \cap L^{\infty}(\mathbb{R}^n\times (0,\infty))$, we have
  \begin{equation*}
    \begin{aligned}
      &\int_{0}^{1} {\int_{\mathbb{R}^n}^{} {|u(y, \tau)|^{1+\frac{2}{n}}} \,dy }d\tau \le \int_{0}^{1} {\left\lVert u(\tau) \right\rVert _{L^\infty }^{\frac{2}{n}} \left\lVert u(\tau) \right\rVert _{L^1}}d\tau \le C. 
    \end{aligned}
   \end{equation*}
Therefore, we can prove \eqref{prop3-2} more easily than \eqref{prop3-1}. 
As a result, we are able to see that the asymptotic formula \eqref{w-asymp} is true. This completes the proof. 
\end{proof}

\begin{proof}[\rm{\bf{Proof of Theorem~\ref{duhameleval}}}]
Combining Propositions~\ref{prop1}, \ref{prop2} and \eqref{D-split}, we can immediately see that the desired asymptotic formula \eqref{D1-asymp} is true. This completes the proof. 
\end{proof}

Finally, in the rest of this subsection, let us introduce the asymptotic profile for $D_{2}(x, t)$ in \eqref{IE} when $n\ge2$. 
Actually, it has already been known by the previous works. More precisely, the following formula is given by Duro--Carpio {\rm \cite{D-C}} (for the proof, see Proposition~1 in {\rm \cite{D-C}}). 
\begin{thm}[\cite{D-C}]\label{thm-new1}
Let $n\ge2$ and $ q=1+\frac{2}{n} $. Suppose $ u_0 \in L^{1}( \mathbb{R}^n; 1+|x| )\cap L^{\infty }( \mathbb{R}^n)$. 
Then, for any $ p \in [1, \infty ] $, the following asymptotic formula holds: 
   \begin{equation}\label{w-asymp-2}
    \lim_{t \to \infty} t^{\frac{n}{2} \left( 1-\frac{1}{p} \right)+\frac{1}{2}} \left\lVert D_{2}(t) - \mathcal{N}\cdot \nabla G(t) \right\rVert _{L^p} = 0,
   \end{equation}
   where $ D_{2}(x,t) $, $ G(x,t) $ and $ \mathcal{N} $ are defined by \eqref{IE}, \eqref{heat} and \eqref{DEF-mathN}, respectively.
\end{thm}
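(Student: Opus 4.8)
The plan is to treat $D_2$ exactly as $w$ was treated in Proposition~\ref{prop2}, regarding $g(y,\tau):=b(y)\nabla u(y,\tau)$ as the source. Note that $D_2(x,t)=\int_0^t\int_{\R^n}\nabla G(x-y,t-\tau)\cdot g(y,\tau)\,dy\,d\tau$ and $\mathcal{N}=\int_0^\infty\int_{\R^n}g(y,\tau)\,dy\,d\tau$, so \eqref{w-asymp-2} is a concentration statement: the Duhamel term generated by an absolutely integrable dipole source converges, after the natural rescaling, to $\mathcal N\cdot\nabla G(t)$. The role played by $\rho$ in the proof of \eqref{prop3-1} is here played by $g$, and the only genuinely new input is the time-integrability of $\|g(\tau)\|_{L^1}$.

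First I would establish $\int_0^\infty\|g(\tau)\|_{L^1}\,d\tau<\infty$; this is where $n\ge2$ is essential. For $\tau\ge1$, using $b\in L^1(\R^n)$ together with \eqref{CDdecay-2} at $p=\infty$,
\[
\|g(\tau)\|_{L^1}\le\|b\|_{L^1}\|\nabla u(\tau)\|_{L^\infty}\le C\,\tau^{-\frac n2-\frac12},
\]
and this exponent is integrable on $[1,\infty)$ precisely when $\tfrac n2+\tfrac12>1$, i.e. $n\ge2$, which is exactly the dimensional threshold recorded in Remark~\ref{remark1}. For $\tau\in(0,1]$ I would instead use $b\in L^\infty(\R^n)$ together with the short-time parabolic smoothing $\|\nabla u(\tau)\|_{L^1}\le C\tau^{-1/2}$ (inherited from the semigroup estimate for $\nabla T(t)$), which is integrable near $\tau=0$. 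Combining the two ranges gives $\int_0^\infty\|g(\tau)\|_{L^1}\,d\tau<\infty$, so $\mathcal N$ is well defined.

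With this integrability in hand I would run the decomposition used for \eqref{prop3-1}, writing $D_2(x,t)-\mathcal N\cdot\nabla G(x,t)=X(x,t)+Y(x,t)$ with
\[
X(x,t):=\int_0^t\int_{\R^n}\nabla\bigl(G(x-y,t-\tau)-G(x,t)\bigr)\cdot g(y,\tau)\,dy\,d\tau,
\]
\[
Y(x,t):=-\nabla G(x,t)\cdot\int_t^\infty\int_{\R^n}g(y,\tau)\,dy\,d\tau.
\]
The term $Y$ is controlled by \eqref{Gdecay} and the tail $\int_t^\infty\|g(\tau)\|_{L^1}\,d\tau\to0$, giving $\|Y(t)\|_{L^p}=o(t^{-\frac n2(1-\frac1p)-\frac12})$. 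For $X$ I would split the $\tau$-integral at $\varepsilon t/2$ and, on the initial part, further split the $y$-integral into $|y|\ge\varepsilon\sqrt t$ and $|y|\le\varepsilon\sqrt t$, exactly as in \eqref{x-split}. The far-time and far-space pieces vanish by dominated convergence using $\int_0^\infty\|g\|_{L^1}<\infty$, while on the near region the difference $\nabla G(x-y,t-\tau)-\nabla G(x,t)$ is estimated by the mean value theorem (in $\tau$ through $\p_tG=\Delta G$, producing third-order derivatives, and in $y$ through $D^2G$) together with \eqref{Gdecay}, producing a factor $C\varepsilon\,t^{-\frac n2(1-\frac1p)-\frac12}$. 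Letting $t\to\infty$ and then $\varepsilon\to0$ yields \eqref{w-asymp-2}.

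The main obstacle is the integrability analysis of the first step, not the concentration argument, which is essentially mechanical once Proposition~\ref{prop2} is available. The clean bound above only yields an integrable tail for $n\ge2$; at $n=1$ the exponent $\tfrac n2+\tfrac12$ equals $1$ and the integral diverges logarithmically, which is precisely the mechanism forcing the correction $K(t)$ in Theorem~\ref{mainresult-2}. Thus any attempt to prove \eqref{w-asymp-2} uniformly in $n$ must break down exactly at $n=1$, and the separate treatment of that case is unavoidable.
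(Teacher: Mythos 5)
Your first step is sound and reproduces exactly the paper's Remark~\ref{remark1}: splitting $\int_0^\infty\|b\nabla u(\tau)\|_{L^1}d\tau$ at $\tau=1$, using $\|b\|_{L^\infty}\|\nabla u(\tau)\|_{L^1}\le C\tau^{-1/2}$ for small times and $\|b\|_{L^1}\|\nabla u(\tau)\|_{L^\infty}\le C\tau^{-\frac{n}{2}-\frac{1}{2}}$ for large times, with convergence precisely when $n\ge2$. The gap is in your second step, and it sits exactly where the real difficulty of this theorem lies: the piece of $X$ with $\tau\in[\varepsilon t/2,t]$, i.e.\ the analogue of $X_1$ in \eqref{x-split}. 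You assert that this ``far-time'' piece vanishes by dominated convergence using $\int_0^\infty\|g(\tau)\|_{L^1}d\tau<\infty$; but to use the $L^1$ norm of the source there you must pair it with $\|\nabla G(t-\tau)\|_{L^p}\sim(t-\tau)^{-\frac{n}{2}\left(1-\frac{1}{p}\right)-\frac{1}{2}}$, which is \emph{not} integrable near $\tau=t$ once $p\ge\frac{n}{n-1}$, so no dominating function exists and the argument collapses for large $p$.

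In the paper's Proposition~\ref{prop2} the corresponding term $X_1$ is instead bounded via Young's inequality in the form $\|\nabla G(t-\tau)\|_{L^1}\|\rho(\tau)\|_{L^p}$, and this closes only because $\rho$ enjoys the enhanced decay \eqref{rho-eval} --- an extra factor $\tau^{-1}\log(2+\tau)$ beyond the self-similar rate --- which comes from Lemma~\ref{lemma}, i.e.\ from the decay of $\|u-MG\|_{L^p}$. Your source $g=b\nabla u$ has no such enhancement with the tools at hand: H\"older gives only $\|g(\tau)\|_{L^p}\le\|b\|_{L^\infty}\|\nabla u(\tau)\|_{L^p}\le C\tau^{-\frac{n}{2}\left(1-\frac{1}{p}\right)-\frac{1}{2}}$ or $\|g(\tau)\|_{L^p}\le\|b\|_{L^p}\|\nabla u(\tau)\|_{L^\infty}\le C\tau^{-\frac{n}{2}-\frac{1}{2}}$, and inserting the better of these into $\int_{\varepsilon t/2}^t(t-\tau)^{-1/2}\|g(\tau)\|_{L^p}d\tau$ yields $C_\varepsilon t^{-\frac{n}{2}}$, which is $o\bigl(t^{-\frac{n}{2}\left(1-\frac{1}{p}\right)-\frac{1}{2}}\bigr)$ only when $p<n$; for $p\ge n$, in particular $p=\infty$, the estimate does not close, and optimizing over intermediate Young--H\"older exponents still fails at $p=\infty$. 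A genuinely enhanced decay of $\|b\nabla u(\tau)\|_{L^p}$ would have to exploit that $\nabla u$ is small where $b$ is concentrated (note $\nabla G(y,\tau)=-\frac{y}{2\tau}G(y,\tau)$ vanishes at $y=0$, and $b$ decays by \eqref{C-b}), which requires gradient asymptotics such as a decay rate for $\|\nabla(u-MG)(\tau)\|_{L^\infty}$ --- established neither in your proposal nor anywhere in the paper. This is precisely why the paper does not prove Theorem~\ref{thm-new1} by the Proposition~\ref{prop2} machinery but quotes Proposition~1 of \cite{D-C}, and why in the one-dimensional case, where the paper does treat $D_2$ directly (Proposition~\ref{prop-new-h}), the late-time part of the Duhamel integral is not estimated away at all but absorbed into the solution-dependent profile $\Phi$, using the cancellation $\int_{\R}\phi(y,\tau)dy=0$ together with the spatial decay \eqref{C-b} of $b$.
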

\begin{rem}\label{remark1}
In order to prove this proposition, the assumption $n\ge2$ is necessary. 
Actually, if $n=1$, we cannot guarantee $\left|\mathcal{N}\right|<\infty$. Now, let us consider the following integral: 
\begin{align}
\int_{0}^{t}\int_{\R^{n}}\left|b(y)\nabla u(y, \tau)\right|dyd\tau
&\le \int_{0}^{1}\left\|b\right\|_{L^{\infty}}\left\|\nabla u(\tau)\right\|_{L^{1}}d\tau+\int_{1}^{t}\left\|b\right\|_{L^{1}}\left\|\nabla u(\tau)\right\|_{L^{\infty}}d\tau \nonumber \\
&\le C\left\|b\right\|_{L^{\infty}}\int_{0}^{1}\tau^{-\frac{1}{2}}d\tau+C\left\|b\right\|_{L^{1}}\int_{1}^{t}\tau^{-\frac{n}{2}-\frac{1}{2}}d\tau, \ \ t>1. \label{N-finite}
\end{align}
Here, we used \eqref{CDdecay-2} and the following fact (see, Lemma~\ref{lem.nu-est-Lp} below or Lemma~1 in {\rm \cite{D-C}}):
\begin{equation*}
\left\|\nabla u(t)\right\|_{L^{1}}\le Ct^{-\frac{1}{2}}, \ \ t>0. 
\end{equation*}
Therefore, if $n\ge2$, it follows from \eqref{N-finite} that 
\[\left|N\right|\le C\int_{0}^{\infty}\int_{\R^{n}}\left|b(y)\nabla u(y, \tau)\right|dyd\tau<\infty.\] 
On the other hand, the right hand side of \eqref{N-finite} glows with the order $O(\log t)$ when $n=1$. 
\end{rem}

\begin{proof}[\rm{\bf{End of the proof of Theorem~\ref{mainresult-1}}}]
Applying Proposition~\ref{linereval}, Theorems \ref{duhameleval} and \ref{thm-new1} for \eqref{IE}, 
we can conclude that the desired asymptotic formula \eqref{main-1} holds. This completes the proof. 
\end{proof}

\subsection{Proof of Theorem~\ref{mainresult-2}}

In this subsection, we would like to prove Theorem~\ref{mainresult-2}. 
In what follows, let us consider the case of $n=1$ and $q=3$. 
By virtue of Theorem~\ref{duhameleval}, we have already obtained the asymptotic formula for $D_{1}(x, t)$ even if $n=1$. 
Therefore, it is sufficient to derive the leading term of $D_{2}(x, t)$ when $n=1$. In \cite{D-C}, Duro--Carpio derived the asymptotic profile for $D_{2}(x, t)$. 
Here, we shall derive the second asymptotic profile for $D_{2}(x, t)$. 
Indeed, we have succeeded to show the following asymptotic formula: 
\noindent
\begin{thm}\label{thm-D2}
Let $n=1$ and $ q=3$. Suppose $ u_0 \in L^{1}( \mathbb{R}; 1+|x| )\cap L^{\infty }( \mathbb{R} )$. 
Then, for any $ p \in [1, \infty ] $, the following asymptotic formula holds:
  \begin{equation}\label{D2-asymp}
   \lim_{t\to \infty}t^{\frac{1}{2}\left(1-\frac{1}{p}\right)+\frac{1}{2}} \left\lVert D_{2}(t) - K(t)(\log t)\p_{x}G(t) - \Phi (t) \right\rVert _{L^p} =0,
    \end{equation}
where $D_{2}(x, t)$ is defined by \eqref{IE}, while $G(x, t)$, $\Phi(x, t)$ and $K(t)$ are defined by \eqref{heat}, \eqref{DEF-Phi} and \eqref{alpha_n}, respectively. 
\end{thm}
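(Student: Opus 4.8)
The plan is to follow the blueprint of Proposition~\ref{prop1}, but now exploiting a \emph{mass/mean-zero} splitting of the convection source that is adapted to the logarithmic growth occurring when $n=1$. Recall from Remark~\ref{remark1} that for $n=1$ the quantity $\int_{0}^{t}\int_{\mathbb{R}} b(y)\partial_x u(y,\tau)\,dy\,d\tau$ grows like $O(\log t)$, so $D_2$ cannot be approximated by a constant multiple of $\partial_x G$ as in Theorem~\ref{thm-new1}. To capture this, I would set $g(\tau):=\int_{\mathbb{R}} b(z)\partial_x u(z,\tau)\,dz$, so that the function $\phi$ in \eqref{DEF-phi} is precisely $\phi(y,\tau)=b(y)\partial_x u(y,\tau)-g(\tau)G(y,\tau)$, giving the exact splitting $b\,\partial_x u=\phi+g(\tau)G$ with the crucial cancellation $\int_{\mathbb{R}}\phi(y,\tau)\,dy=0$ for every $\tau>0$. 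Inserting this into the definition of $D_2$ in \eqref{IE} and using linearity, I would write
\begin{align*}
D_2(x,t)=\int_{0}^{t}\big(\partial_x G(t-\tau)*\phi(\tau)\big)(x)\,d\tau+\int_{0}^{t} g(\tau)\big(\partial_x G(t-\tau)*G(\tau)\big)(x)\,d\tau.
\end{align*}

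For the second integral I would use the semigroup identity $G(t-\tau)*G(\tau)=G(t)$, which forces $\partial_x G(t-\tau)*G(\tau)=\partial_x G(t)$ for every $\tau\in(0,t)$. Hence this term collapses to $\big(\int_{0}^{t} g(\tau)\,d\tau\big)\,\partial_x G(x,t)=K(t)(\log t)\,\partial_x G(x,t)$, by the definition of $K(t)$ in \eqref{alpha_n}. The $\tau$-integral converges and is of size $O(\log t)$ because $|g(\tau)|\le\|b\|_{L^\infty}\|\partial_x u(\tau)\|_{L^1}\le C\tau^{-1/2}$ for small $\tau$ (using $\|\partial_x u(\tau)\|_{L^1}\le C\tau^{-1/2}$) and $|g(\tau)|\le\|b\|_{L^1}\|\partial_x u(\tau)\|_{L^\infty}\le C\tau^{-1}$ for $\tau\ge1$ by \eqref{CDdecay-2}; in particular this already yields the uniform bound $|K(t)|\le C$ needed later.

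It then remains only to identify the mean-zero Duhamel term $W(x,t):=\int_{0}^{t}\big(\partial_x G(t-\tau)*\phi(\tau)\big)(x)\,d\tau$ with $\Phi(x,t)$. For this I would repeat the scaling computation \eqref{K-henkei}--\eqref{Psi-t} verbatim: setting $P(x,t):=\int_{0}^{t}\int_{\mathbb{R}} G(x-y,t-\tau)\phi(y,\tau)\,dy\,d\tau$ and performing the change of variables $\tau=ts$, $y=\sqrt{t}\,z$ together with the self-similar rescaling $G(x-\sqrt{t}z,t(1-s))=t^{-1/2}G\big(x/\sqrt{t}-z,1-s\big)$, one obtains $P(x,t)=t\int_{0}^{1}\big(G(1-s)*\phi(\sqrt{t}\,\cdot,ts)\big)\big(x/\sqrt{t}\big)\,ds$, and applying $\partial_x$ produces exactly $W(x,t)=\Phi(x,t)$ as defined in \eqref{DEF-Phi}. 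I emphasize the one structural difference from Proposition~\ref{prop1}: here the lower limit of the $\tau$-integral is $0$ (rather than $1$), so the rescaled variable $s$ ranges over the \emph{full} interval $[0,1]$ and no boundary remainder of the type $\int_{0}^{1/t}$ survives. Consequently $D_2-K(t)(\log t)\partial_x G-\Phi\equiv 0$, and \eqref{D2-asymp} reduces to verifying that $\Phi(t)$ is a well-defined element of $L^p$ (so that the norm in \eqref{D2-asymp} is meaningful and vanishes).

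The main obstacle is precisely this well-definedness together with the claimed decay, i.e.\ the uniform bound $\|\Phi_*(t)\|_{L^p}\le C$, which after undoing the self-similar scaling certifies $\|\Phi(t)\|_{L^p}=O(t^{-\frac{1}{2}(1-1/p)-\frac{1}{2}})$ and hence that $\Phi$ is a genuine second profile, faster by $\log t$ than $K(t)(\log t)\partial_x G$. The difficulty is the non-integrable singularity of $\partial_x G(1-s)$ as $s\to 1$. To control it I would split the $s$-integral away from and near $s=1$ and, on the singular piece, transfer the derivative and exploit the cancellation $\int_{\mathbb{R}}\phi=0$ through a mean value theorem argument exactly as in the chain leading to \eqref{dag-est}, thereby bounding $\Phi_*$ in terms of a first-moment bound $\int_{\mathbb{R}}|y|\,|\phi(y,\tau)|\,dy$ and of $\|\phi(\tau)\|_{L^p}$. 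These last quantities must then be estimated from the definition $\phi=b\,\partial_x u-g(\tau)G$ using the spatial decay \eqref{C-b} of $b$, the gradient estimate \eqref{CDdecay-2}, and $\|\partial_x u(\tau)\|_{L^1}\le C\tau^{-1/2}$; this estimation of $\phi$ and of $\Phi_*$ is the technical heart of the argument.
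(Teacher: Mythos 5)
Your core reduction is correct, and it is a genuinely different --- and considerably shorter --- route than the paper's. The semigroup identity $\p_{x}G(t-\tau)\ast G(\tau)=\p_{x}\left(G(t-\tau)\ast G(\tau)\right)=\p_{x}G(t)$ does collapse the $g(\tau)G$-part of the source onto $\left(\int_{0}^{t}g(\tau)d\tau\right)\p_{x}G(t)=K(t)(\log t)\p_{x}G(t)$, and your rescaling of the mean-zero part is exactly the paper's computation \eqref{E-henkei}--\eqref{mainpart-2} carried out over the full interval, so that $D_{2}-K(t)(\log t)\p_{x}G-\Phi\equiv0$; the splitting, the change of variables and the differentiation under the integral are all legitimate because $\|\phi(\tau)\|_{L^{1}}\le C\tau^{-1/2}$ and $\|\phi(\tau)\|_{L^{\infty}}\le C\tau^{-1}$ follow from Lemma~\ref{lem.nu-est-Lp} and $b\in L^{1}\cap L^{\infty}$. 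The paper instead splits $D_{2}=\eta+h$ at $\tau=1$ in \eqref{DEF-h}, proves Proposition~\ref{prop-new2} for $\eta$ (by the method of Proposition~\ref{prop2}) and Proposition~\ref{prop-new-h} for $h$ (via the Cauchy problem for $H$, Duhamel, and the boundary-layer estimates \eqref{final-est}, \eqref{I1+I2+I3}). Your identity in fact shows that the paper's two remainders are exact negatives of each other, $\eta-\mathcal{L}\p_{x}G=\int_{0}^{1}\p_{x}G(t-\tau)\ast\phi(\tau)\,d\tau=-\left(h-\mathcal{K}(t)(\log t)\p_{x}G-\Phi\right)$, so the sum estimated in the paper vanishes identically. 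What the paper's longer route buys is quantitative: its estimates of the $[0,1/t]$-piece are precisely what establish the uniform bound $\|\Phi_{*}(t)\|_{L^{p}}\le C$ invoked in the remark following Theorem~\ref{mainresult-2}, which the identity alone does not provide.

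Two caveats on your final paragraph, which is where your sketch departs from what would actually work. First, the difficulty in bounding $\Phi_{*}$ is not at $s\to1$: with the Young pairing $\|\p_{x}G(1-s)\|_{L^{1}}\|\phi(\sqrt{t}\,\cdot,ts)\|_{L^{p}}$ the singularity there is only $(1-s)^{-1/2}$, which is integrable. The genuine problem is at $s\to0$, where the norms of $\phi(\cdot,ts)$ blow up, and it is there --- on the piece $[0,1/t]$ that your argument discards --- that the paper exploits $\int_{\R}\phi\,dy=0$ and the mean value theorem in \eqref{final-est}; applying that trick near $s=1$ would make matters worse, since it trades $\p_{x}G(1-s)$ for $\p_{x}^{2}G(1-s)$, whose norms behave like $(1-s)^{-1}$ and are non-integrable. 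Second, controlling the small-$s$ piece by the first moment $\int_{\R}|y||\phi(y,\tau)|dy$ works only when $\delta\ge1$ in \eqref{C-b}, since it requires $|x|b\in L^{\infty}(\R)$; for $0<\delta<1$ that moment need not be finite (no spatial moments of $\p_{x}u$ are available), which is exactly why Proposition~\ref{prop-new-h} introduces the additional splitting $I_{1},I_{2},I_{3}$ in \eqref{I1+I2+I3} with the cutoff $|y|\ge\sqrt{s}\,\varepsilon^{-1/\delta}$ and obtains only an $o(\cdot)$ rate in that case. Neither caveat invalidates your proof of \eqref{D2-asymp} itself, since under your reduction the quantity inside the norm is identically zero; but they must be repaired if, as you propose, one also wants the uniform bound on $\Phi_{*}$, i.e.\ the statement that $\Phi$ genuinely decays like $t^{-\frac{1}{2}\left(1-\frac{1}{p}\right)-\frac{1}{2}}$.
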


In order to prove this theorem, let us decompose $D_{2}(x, t)$ as follows: 
\begin{align}
D_{2}(x, t)
&=\int_{0}^{1}\left(\p_{x}G(t-\tau)*b\p_{x}u(\tau)\right)(x)d\tau+\int_{1}^{t}\left(\p_{x}G(t-\tau)*b\p_{x}u(\tau)\right)(x)d\tau \nonumber \\
&=:\eta(x, t)+h(x, t). \label{DEF-h}
\end{align}
Moreover, we shall introduce the following function $\mathcal{K}(t)$ and constant $\mathcal{L}$: 
\begin{align}
&\mathcal{K}(t):=\frac{1}{\log t}\int_{1}^{t}\int_{\R}b(y)\p_{x}u(y, \tau)dyd\tau, \ \  \mathcal{L} \coloneqq \int_{0}^{1}\int_{\R}b(y)\p_{x}u(y, \tau)dyd\tau. \label{DEF-mathL}
\end{align}
Then, for $\mathcal{K}(t)$, $\mathcal{L}$ and $K(t)$ in \eqref{alpha_n}, we note that the following relation holds: 
\begin{equation}\label{K-split}
K(t)(\log t)
=\mathcal{K}(t)(\log t)+\mathcal{L}.
\end{equation}

Since $|\mathcal{L}|<\infty$ from \eqref{N-finite}, for the first term $\eta(x, t)$ in the right hand side of \eqref{DEF-h}, we can easily show the following asymptotic formula. 
We omit the proof of this proposition because it can be given in completely the same way to prove Proposition~\ref{prop2}. 
\begin{prop}\label{prop-new2}
Let $n=1$ and $ q=3 $. Suppose $ u_0 \in L^{1}( \mathbb{R}; 1+|x| )\cap L^{\infty }( \mathbb{R} )$. 
Then, for any $ p \in [1, \infty ] $, the following asymptotic formula holds: 
   \begin{equation}\label{eta-asymp}
    \lim_{t \to \infty} t^{\frac{1}{2} \left( 1-\frac{1}{p} \right)+\frac{1}{2}} \left\lVert \eta(t) - \mathcal{L}\p_{x}G(t) \right\rVert _{L^p} = 0,
   \end{equation}
   where $ \eta(x, t) $, $ G(x,t) $ and $ \mathcal{L} $ are defined by \eqref{DEF-h}, \eqref{heat} and \eqref{DEF-mathL}, respectively.
\end{prop}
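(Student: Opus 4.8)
The plan is to transcribe the proof of the estimate \eqref{prop3-2} from Proposition~\ref{prop2}, since $\eta(x,t)$ is precisely the $\tau\in[0,1]$ Duhamel contribution, now carrying the density $b(y)\p_{x}u(y,\tau)$ in place of $(|u|^{\frac{2}{n}}u)(y,\tau)$. First I would use the definition of $\mathcal{L}$ in \eqref{DEF-mathL} to write the target quantity as a difference of heat-kernel derivatives tested against that density:
\[
\eta(x,t)-\mathcal{L}\,\p_{x}G(x,t)=\int_{0}^{1}\int_{\R}\bigl(\p_{x}G(x-y,t-\tau)-\p_{x}G(x,t)\bigr)\,b(y)\p_{x}u(y,\tau)\,dy\,d\tau,
\]
so the whole estimate reduces to controlling $\p_{x}G(x-y,t-\tau)-\p_{x}G(x,t)$ in $L^{p}$ against the finite measure $b(y)\p_{x}u(y,\tau)\,dy\,d\tau$ on $\R\times[0,1]$. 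The finiteness of that total mass is the one ingredient I would record explicitly: by the first estimate in \eqref{N-finite},
\[
\int_{0}^{1}\int_{\R}\left|b(y)\p_{x}u(y,\tau)\right|dy\,d\tau\le\|b\|_{L^{\infty}}\int_{0}^{1}\|\nabla u(\tau)\|_{L^{1}}d\tau\le C\|b\|_{L^{\infty}}\int_{0}^{1}\tau^{-\frac{1}{2}}d\tau<\infty,
\]
using $\|\nabla u(t)\|_{L^{1}}\le Ct^{-\frac{1}{2}}$ (Lemma~\ref{lem.nu-est-Lp}, or Lemma~1 in \cite{D-C}). This plays the role of the bound $\int_{1}^{\infty}\int_{\R^{n}}|\rho|<\infty$ in Proposition~\ref{prop2}.

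Next I would fix $\e\in(0,1)$ and split the $y$-integral into the regions $|y|\le\e\sqrt{t}$ and $|y|>\e\sqrt{t}$. Because $\tau$ ranges only over the bounded interval $[0,1]$, one has $t-\tau\simeq t$ for $t\ge2$, so there is no need for the outer-time splitting $X_{1}$ nor for the tail term $Y$ that appeared in the $[1,t]$-analysis; this is exactly why the paper calls it ``easier.'' On the inner region $|y|\le\e\sqrt{t}$, the chain of estimates \eqref{X3part1}--\eqref{Gdiff} applies verbatim (a mean-value estimate in $x$ for the spatial shift $|y|\le\e\sqrt{t}$, together with $\p_{t}G=\Delta G$ and \eqref{Gdecay} for the time shift $\tau\le1$) and gives $\|\p_{x}(G(\cdot-y,t-\tau)-G(\cdot,t))\|_{L^{p}}\le C\e\,t^{-\frac{1}{2}(1-\frac{1}{p})-\frac{1}{2}}$; integrating this against the finite total mass yields a contribution of size $C\e\,t^{-\frac{1}{2}(1-\frac{1}{p})-\frac{1}{2}}$. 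On the outer region $|y|>\e\sqrt{t}$, I would simply use the triangle inequality and \eqref{Gdecay}, namely $\|\p_{x}G(\cdot-y,t-\tau)\|_{L^{p}}+\|\p_{x}G(\cdot,t)\|_{L^{p}}\le C\,t^{-\frac{1}{2}(1-\frac{1}{p})-\frac{1}{2}}$, producing the bound $C\,t^{-\frac{1}{2}(1-\frac{1}{p})-\frac{1}{2}}\int_{0}^{1}\int_{|y|>\e\sqrt{t}}|b\p_{x}u|\,dy\,d\tau$, whose last factor tends to $0$ by the Lebesgue dominated convergence theorem exactly as in \eqref{zlim}. Combining the two regions gives $\limsup_{t\to\infty}t^{\frac{1}{2}(1-\frac{1}{p})+\frac{1}{2}}\|\eta(t)-\mathcal{L}\p_{x}G(t)\|_{L^{p}}\le C\e$, and letting $\e\downarrow0$ establishes \eqref{eta-asymp}.

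I expect no genuine obstacle here: the only delicate point is the integrability near $\tau=0$, and the time-weight $\tau^{-\frac{1}{2}}$ coming from $\|\nabla u(\tau)\|_{L^{1}}$ is integrable on $[0,1]$, so the short-time window is harmless. Everything else is a direct transcription of the $[0,1]$-part of the proof of Proposition~\ref{prop2}, which is precisely the justification the authors invoke when they omit the proof.
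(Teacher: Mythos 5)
Your proposal is correct and is exactly the argument the paper has in mind: the authors omit the proof precisely because it is a transcription of the $[0,1]$-part of the proof of Proposition~\ref{prop2}, with the finite mass $\int_{0}^{1}\int_{\R}|b\,\p_{x}u|\,dy\,d\tau<\infty$ (from \eqref{N-finite} and Lemma~\ref{lem.nu-est-Lp}) replacing the role of $\int_{1}^{\infty}\int_{\R^{n}}|\rho|\,dy\,d\tau<\infty$, followed by the same $|y|\lessgtr\e\sqrt{t}$ splitting, the mean-value estimates \eqref{X3part1}--\eqref{Gdiff}, and dominated convergence as in \eqref{zlim}. Your observation that the bounded time window $[0,1]$ makes the $X_{1}$- and $Y$-type terms unnecessary matches the paper's remark that \eqref{prop3-2} is ``easier'' than \eqref{prop3-1}.
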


By virtue of the above proposition, it is sufficient to consider the asymptotic behavior of the second term $h(x, t)$ in the right hand side of \eqref{DEF-h}. 
In order to do that, we shall prepare the decay estimate for $\nabla u(x, t)$. 
Actually, such an estimate has already been introduced in \eqref{CDdecay-2} for $t>1$. 
However, our results require that the estimate of \eqref{CDdecay-2} holds not only for $t>1$ but also for $t>0$. 
For deriving that estimate, let us apply the following Gronwall's type lemma (for the proof, see e.g. \cite{Z1}): 
\begin{lemma}[\cite{Z1}]\label{Gronwall}
Let $T>0$, $A, B>0$ and $\alpha\in [0, 1)$. Suppose that $\varphi \in C([0, T])$ be a non-negative function satisfying the following inequality:  
\[
\varphi(t) \le At^{-\alpha}+B\int_{0}^{t}(t-\tau)^{-\frac{1}{2}}\varphi(\tau) d\tau, \ \ t\in (0, T].
\]
Then, there exists a constant $C>0$ depending only $T$, $B$ and $\alpha$ such that 
\[
\varphi(t)\le CAt^{-\alpha}, \ \ t\in (0, T].
\]
\end{lemma}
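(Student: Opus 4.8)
The plan is to remove the half-order singularity of the kernel $(t-\tau)^{-1/2}$ by iterating the underlying Volterra operator once, thereby reducing the statement to the classical Gronwall inequality with a bounded kernel. Writing $(\mathcal{T}\psi)(t):=B\int_{0}^{t}(t-\tau)^{-1/2}\psi(\tau)\,d\tau$, the hypothesis reads $\varphi\le At^{-\alpha}+\mathcal{T}\varphi$ on $(0,T]$; since $\varphi\in C([0,T])$ is bounded, every integral below is finite and all substitutions are justified.

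First I would substitute the bound for $\varphi$ into the right-hand side once — legitimate since the nonnegative kernel makes $\mathcal{T}$ monotone on nonnegative functions — to obtain $\varphi\le At^{-\alpha}+\mathcal{T}(A\tau^{-\alpha})+\mathcal{T}^{2}\varphi$. The middle term is computed explicitly: the scaling $\tau\mapsto ts$ gives $\mathcal{T}(A\tau^{-\alpha})(t)=ABC_{\alpha}\,t^{\frac12-\alpha}$ with $C_{\alpha}:=\int_{0}^{1}(1-s)^{-1/2}s^{-\alpha}\,ds<\infty$ (finite because $\alpha<1$), and on the bounded interval $t^{\frac12-\alpha}\le T^{1/2}t^{-\alpha}$, so this term is already dominated by a constant multiple of $At^{-\alpha}$.

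The crucial step is the evaluation of the twice-iterated operator. Interchanging the order of integration in $\mathcal{T}^{2}\varphi$ and applying the change of variables $\tau=\sigma+(t-\sigma)r$ collapses the inner integral to $\int_{0}^{1}(1-r)^{-1/2}r^{-1/2}\,dr=\pi$, independently of $t$ and $\sigma$; hence $\mathcal{T}^{2}\varphi(t)=\pi B^{2}\int_{0}^{t}\varphi(\sigma)\,d\sigma$, a genuinely regular Volterra term. Combining the three pieces yields $\varphi(t)\le\tilde{A}t^{-\alpha}+\pi B^{2}\int_{0}^{t}\varphi(\sigma)\,d\sigma$ with $\tilde{A}:=A(1+BC_{\alpha}T^{1/2})$.

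Finally I would invoke the classical integral form of Gronwall's inequality for this regular-kernel estimate, giving $\varphi(t)\le\tilde{A}t^{-\alpha}+\pi B^{2}\int_{0}^{t}\tilde{A}\sigma^{-\alpha}e^{\pi B^{2}(t-\sigma)}\,d\sigma$. Bounding $e^{\pi B^{2}(t-\sigma)}\le e^{\pi B^{2}T}$ and $\int_{0}^{t}\sigma^{-\alpha}\,d\sigma=(1-\alpha)^{-1}t^{1-\alpha}\le(1-\alpha)^{-1}Tt^{-\alpha}$ then collects everything into $\varphi(t)\le CAt^{-\alpha}$ with $C=C(T,B,\alpha)$, as claimed. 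The main obstacle is exactly the singular kernel, which rules out a direct application of Gronwall; the heart of the matter is recognizing that a single extra iteration regularizes it through the identity $\int_{0}^{1}(1-r)^{-1/2}r^{-1/2}\,dr=\pi$, with the convergence of all auxiliary integrals secured by $\alpha<1$ together with the exponent $\tfrac12>0$.
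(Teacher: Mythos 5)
Your proof is correct: the single iteration of the Volterra operator, the Beta-function identity $\int_{0}^{1}(1-r)^{-1/2}r^{-1/2}\,dr=\pi$ that regularizes the kernel, and the final application of the classical Gronwall inequality (with the singular but integrable forcing $\tilde{A}t^{-\alpha}$) all check out. The paper itself does not prove this lemma but defers to \cite{Z1}, where the argument is the same standard kernel-iteration technique, so your proposal matches the intended proof in both substance and spirit.
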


Now, we would like to prove the $L^{p}$-decay estimate of $\nabla u(x, t)$ for any $p\in [1, \infty]$ and $t>0$. 
Actually, the following estimate \eqref{dxu-infty-Lp} has already been obtained for $p=1$, by Duro--Carpio \cite{D-C}. 
Here, we generalize it and give the proof by slightly modifying the technique used in \cite{D-C, DZ99, Z1}. 
\begin{lemma}\label{lem.nu-est-Lp}
Let $u(x, t)$ be the solution to \eqref{CD} with $q>1+\frac{1}{n}$ and $u_{0}\in L^{1}(\R^{n})\cap L^{n(q-1)}(\R^{n})$. 
Then, for any $p\in [1, \infty]$, there exists a constant $C>0$ depending on $\|u_{0}\|_{L^{1}}$ and $\|u_{0}\|_{L^{n(q-1)}}$ such that  
\begin{equation}\label{dxu-infty-Lp}
\left\|\nabla u(t)\right\|_{L^{p}}\le Ct^{-\frac{n}{2}\left(1-\frac{1}{p}\right)-\frac{1}{2}}, \ \ t>0. 
\end{equation}
\end{lemma}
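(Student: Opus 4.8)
The plan is to split the solution through the Duhamel formula associated with \eqref{CD}, writing $u(t)=T(t)u_{0}+\int_{0}^{t}T(t-\tau)\big[d\cdot\nabla(|u|^{q-1}u)(\tau)\big]d\tau$, and to estimate $\nabla u$ term by term. The linear contribution is immediate: since $u_{0}\in L^{1}(\R^{n})$, the smoothing estimate for $\nabla T(t)$ recorded in the Introduction (with $r=1$) gives $\|\nabla T(t)u_{0}\|_{L^{p}}\le C\|u_{0}\|_{L^{1}}t^{-\frac{n}{2}(1-1/p)-\frac{1}{2}}$ for \emph{all} $t>0$, which is already the target rate. Moreover, for $t>1$ the assertion is nothing but \eqref{CDdecay-2}, so the whole content of the lemma is concentrated on the interval $(0,1]$ and on the nonlinear Duhamel term. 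First I would fix the finite horizon $T=1$, run a Gronwall argument there closing the estimate with Lemma~\ref{Gronwall}, and afterwards patch the ranges $0<t\le1$ and $t\ge1$ at $t=1$.

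For the nonlinear term I would keep exactly one spatial derivative on the kernel (so that the time singularity stays integrable near $\tau=t$) and retain $\nabla u$ inside the forcing, writing $d\cdot\nabla(|u|^{q-1}u)=q\,|u|^{q-1}(d\cdot\nabla u)$. Applying the gradient smoothing estimate from $L^{r}$ to $L^{p}$ together with Hölder's inequality, the factor $|u|^{q-1}$ must be placed in the \emph{scaling-critical} space $L^{n(q-1)}$ (or a space just above it). The key preliminary step is therefore the uniform-in-time bound $\sup_{\tau>0}\|u(\tau)\|_{L^{n(q-1)}}\le C(\|u_{0}\|_{L^{1}},\|u_{0}\|_{L^{n(q-1)}})$: for small $\tau$ it comes from the contraction property of the flow in $L^{n(q-1)}$, while for large $\tau$ it follows from \eqref{CDdecay}, whose exponent is positive precisely because $q>1+\frac{1}{n}$. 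This is exactly where both norms in the statement enter, which explains the stated dependence of the constant.

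With the critical factor under control, the gradient inequality takes the shape $\varphi(t)\le At^{-\alpha}+B\int_{0}^{t}(t-\tau)^{-1/2}\varphi(\tau)\,d\tau$ after choosing the Hölder exponents and rescaling by the expected decay, with $\alpha\in[0,1)$; Lemma~\ref{Gronwall} then yields $\|\nabla u(t)\|_{L^{p}}\le Ct^{-\frac{n}{2}(1-1/p)-\frac{1}{2}}$ on $(0,1]$ in the range of $p$ for which $\alpha<1$, i.e. $p$ close to $1$ (the case $p=1$ being already available from \cite{D-C}). The remaining exponents, up to $p=\infty$, I would reach by a bootstrap: once the bound is known for some $p_{0}$, the smoothing estimate from $L^{p_{0}}$ to a larger $p$ applied to the Duhamel term, combined with the already-established lower-order gradient bound, raises the integrability, and the endpoint $p=\infty$ follows by the same iteration (or by interpolation).

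The main obstacle is the scaling-critical character of the nonlinearity when $q$ is near $1+\frac{1}{n}$. The naive estimate $\||u|^{q-1}\nabla u\|_{L^{p}}\le\|u(\tau)\|_{L^{\infty}}^{q-1}\|\nabla u(\tau)\|_{L^{p}}$ produces, through the critical smoothing bound $\|u(\tau)\|_{L^{\infty}}^{q-1}\le C\tau^{-1/2}$, an extra time weight $\tau^{-1/2}$ in front of $\varphi(\tau)$, and a direct computation (rescaling $\tau=ts$) shows that the resulting homogeneous kernel of degree $-1$ is \emph{not} integrable for the target decay once $p>1$. Overcoming this forces the more careful Hölder splitting described above, in which the factor of $u$ is measured slightly above the critical space so that its time weight is traded against a strictly integrable kernel exponent; verifying that this trade can be made while keeping $\alpha<1$, and organizing the resulting bootstrap over $p$, is the delicate part of the argument, and it is here that the strict inequality $q>1+\frac{1}{n}$ is used in an essential way.
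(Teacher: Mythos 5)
You have correctly located the obstacle --- the scale-critical character of the nonlinearity --- but the fix you propose does not overcome it, and this is a genuine gap. Since $L^{n(q-1)}$ is exactly the scale-invariant space for the nonlinearity $d\cdot\nabla(|u|^{q-1}u)$, \emph{every} H\"older splitting of $|u|^{q-1}\nabla u$ leads to a Gronwall inequality whose kernel has total homogeneity $-1$. Concretely, if you put $|u|^{q-1}$ in $L^{\sigma}$ with $\sigma>n$ (``just above critical''), the smoothing estimate from $L^{n(q-1)}$ gives $\bigl\||u(\tau)|^{q-1}\bigr\|_{L^{\sigma}}\le C\|u_0\|_{L^{n(q-1)}}^{q-1}\tau^{-\frac12+\frac{n}{2\sigma}}$, while the gradient smoothing of the semigroup contributes $(t-\tau)^{-\frac{n}{2\sigma}-\frac12}$: the two exponents always sum to $1$. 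So on $(0,1]$ you arrive at $\varphi(t)\le At^{-\alpha}+B\int_0^t(t-\tau)^{-a}\tau^{-b}\varphi(\tau)\,d\tau$ with $a+b=1$ and $B\sim\|u_0\|_{L^{n(q-1)}}^{q-1}$, and such a critical inequality cannot be closed: substituting $\varphi(\tau)\le C\tau^{-\alpha}$ into the integral returns exactly $CB\,\mathrm{B}(1-a,1-b-\alpha)\,t^{-\alpha}$, so iteration gains nothing, and Lemma~\ref{Gronwall} (whose kernel $(t-\tau)^{-1/2}$ is subcritical, which is precisely why finite-horizon iteration converges there) does not apply. Worse, the conclusion is genuinely false for such kernels without smallness of $B$: for any $B>0$ one can choose $\beta\in(\alpha,a)$ with $B\,\mathrm{B}(1-a,a-\beta)\ge1$, and then $\varphi(t)=t^{-\beta}$ satisfies the inequality while violating the bound $\varphi(t)\le CAt^{-\alpha}$. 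Chaining through different exponents $p\to p'\to\cdots$ only moves the same dimensionless constant around; scaling invariance guarantees no rearrangement produces a subcritical kernel. Your reduction to $(0,1]$ does not help either, since the estimate is needed as $t\to0^{+}$, where the $\tau$-singularity of the weight is unavoidable.

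The paper escapes this by an idea absent from your plan: parabolic rescaling combined with a time shift. It sets $u_{\lambda}(x,t)=\lambda^{n}u(\lambda x,\lambda^{2}t)$ as in \eqref{u-lambda} and writes the Duhamel formula for $u_{\lambda}(\cdot+s)$ with a fixed shift $s>0$, using that the semigroup estimates \eqref{est-semigroup} for the rescaled coefficient $a(\lambda x)$ are uniform in $\lambda$. Choosing $r=n(q-1)$ makes the powers of $\lambda$ cancel exactly, and --- this is the decisive point --- the singular weight $\tau^{-1/2}$ is replaced by $(\tau+s)^{-1/2}\le s^{-1/2}$, a constant, so the kernel in \eqref{nu(t+s)-est-pre} is the subcritical $(t-\tau)^{-1/2}$ and Lemma~\ref{Gronwall} applies on $[0,1]$ with no smallness assumption. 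Taking $t=s=\tfrac12$ yields $\|\nabla u_{\lambda}(1)\|_{L^{p}}\le C$ uniformly in $\lambda$, and the substitution $\lambda=\sqrt{t}$ converts this single fixed-time bound into the decay estimate \eqref{dxu-infty-Lp} for all $t>0$; in particular the paper never estimates $\nabla u_\lambda$ near time zero at all. The only ingredient your plan shares with the paper is the final bootstrap over $p$ (from $p\in[1,\tfrac{n}{n-1})$, where the Gronwall exponent satisfies $\alpha<1$, up to $p=\infty$), but without the rescaling-plus-shift mechanism the base case of that bootstrap is not established.
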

\begin{proof}
First, let us introduce the following rescaled functions: 
\begin{equation}\label{u-lambda}
u_{\lambda}(x, t):=\lambda^{n}u\left(\lambda x, \lambda^{2}t\right), \ \ \lambda>0. 
\end{equation}
Then, since $u(x, t)$ is the solution to \eqref{CD}, these functions satisfy  
\begin{equation}\label{CD-rescal}
  \begin{aligned}
    &\partial_t u_{\lambda} -  \mathrm{div}\left(a(\lambda x)\nabla u_{\lambda}\right) = \lambda^{n(1-q)+1}d\cdot\nabla  \left(\left\lvert u_{\lambda}\right\rvert ^{q-1}u_{\lambda}\right),\ \  x\in\mathbb{R}^n, \ t>0,\\
    &u_{\lambda}(x,0)=u_{\lambda, 0}(x)=\lambda^{n}u_{0}(\lambda x),\ \  x\in\mathbb{R}^n. 
\end{aligned}
\end{equation}
Therefore, for any fixed $s>0$, we can derive the following integral equations: 
\begin{equation*}
u_{\lambda}(t+s)=T_{\lambda}(t)\left[u_{\lambda}(s)\right]+\lambda^{n(1-q)+1}\int_{0}^{t}T_{\lambda}(t-\tau)d\cdot \left[\nabla\left(F\left(u_{\lambda}(\tau+s)\right)\right)\right]d\tau, 
\end{equation*}
where $F(u_{\lambda}):=\left\lvert u_{\lambda}\right\rvert ^{q-1}u_{\lambda}$ and $T_{\lambda}(t)$ is the semigroup generated by the linear diffusion equation with coefficient $a_{\lambda}(x):=a(\lambda x)$. 

Now, taking the gradient in the above integral equations, we have 
\begin{equation}\label{int-nu(t+s)}
\nabla u_{\lambda}(t+s)=\nabla \left(T_{\lambda}(t)\left[u_{\lambda}(s)\right]\right)+\lambda^{n(1-q)+1}\int_{0}^{t}\nabla \left(T_{\lambda}(t-\tau)d\cdot \left[\nabla\left(F\left(u_{\lambda}(\tau+s)\right)\right)\right]\right)d\tau. 
\end{equation}
Here, we recall that $T_{\lambda}(t)$ satisfies the following estimate: 
\begin{equation}\label{est-semigroup}
\left\|\nabla \left(T_{\lambda}(t)\left[v\right]\right)\right\|_{L^{p}}\le C\left\|v\right\|_{L^{r}}t^{-\frac{n}{2}\left(\frac{1}{r}-\frac{1}{p}\right)-\frac{1}{2}}, \ \ t>0, \ \lambda>0, \ 1\le r\le p\le \infty
\end{equation}
with some positive constant $C>0$ independent of $\lambda$. Moreover, we note that the solution $u(x, t)$ also satisfies
\begin{equation*}
\left\|u(t)\right\|_{L^{p}}\le C\left\|u_{0}\right\|_{L^{r}}t^{-\frac{n}{2}\left(\frac{1}{r}-\frac{1}{p}\right)}, \ \ t>0, \ 1\le r\le p\le \infty. 
\end{equation*}
For their two proofs, see Proposition~1 in \cite{DZ99}. 
Furthermore, since $u_{\lambda}(x, t)$ is the solution to \eqref{CD-rescal}, analogously as the above estimate, we can see that $u_{\lambda}(x, t)$ also satisfies 
\begin{equation*}
\left\|u_{\lambda}(t)\right\|_{L^{p}}\le C\left\|u_{\lambda, 0}\right\|_{L^{r}}t^{-\frac{n}{2}\left(\frac{1}{r}-\frac{1}{p}\right)}, \ \ t>0, \ \lambda>0, \ 1\le r\le p\le \infty. 
\end{equation*}
Therefore, it follows from the above estimate that 
\[
\left\|u_{\lambda}(t)\right\|_{L^{\infty}}\le Ct^{-\frac{n}{2r}}\lambda^{n\left(1-\frac{1}{r}\right)}\left\|u_{0}\right\|_{L^{r}}, \ \ t>0, \ \lambda>0, \ 1\le r\le \infty
\]
with some positive constant $C>0$ independent of $\lambda$. From the above estimate, we obtain 
\begin{equation}\label{Lp-Lq-est-u}
\left\|u_{\lambda}(\tau+s)\right\|_{L^{\infty}}\le Cs^{-\frac{n}{2r}}\lambda^{n\left(1-\frac{1}{r}\right)}\left\|u_{0}\right\|_{L^{r}}, \ \ \tau\ge0, \ s>0, \ \lambda>0, \ 1\le r\le \infty. 
\end{equation}
Thus, taking $r=n(q-1)$, it follows from \eqref{int-nu(t+s)}, \eqref{est-semigroup} and \eqref{Lp-Lq-est-u} that 
\begin{align}
\left\|\nabla u_{\lambda}(t+s)\right\|_{L^{p}}
&\le C\left\|u_{0}\right\|_{L^{1}}t^{-\frac{n}{2}\left(1-\frac{1}{p}\right)-\frac{1}{2}} \nonumber \\
&\ \ \ +C|d|q\lambda^{n(1-q)+1}\int_{0}^{t}(t-\tau)^{-\frac{1}{2}}\left\|u_{\lambda}(\tau+s)\right\|_{L^{\infty}}^{q-1}\left\|\nabla u_{\lambda}(\tau+s)\right\|_{L^{p}}d\tau 
\nonumber \\
&\le C\left\|u_{0}\right\|_{L^{1}}t^{-\frac{n}{2}\left(1-\frac{1}{p}\right)-\frac{1}{2}} \nonumber \\
&\ \ \ +C|d|q\left\|u_{0}\right\|_{L^{n(q-1)}}^{q-1}s^{-\frac{1}{2}}\int_{0}^{t}(t-\tau)^{-\frac{1}{2}}\left\|\nabla u_{\lambda}(\tau+s)\right\|_{L^{p}}d\tau, \label{nu(t+s)-est-pre}
\end{align}
for any $p\in [1, \infty]$, $t>0$, $s>0$ and $\lambda>0$. 

Applying Lemma~\ref{Gronwall} for \eqref{nu(t+s)-est-pre} with $p\ge1$ satisfying $\frac{n}{2}\left(1-\frac{1}{p}\right)+\frac{1}{2}<1$, i.e., $p\in \left[1, \frac{n}{n-1}\right)$, we have the following estimate: 
\[
\left\|\nabla u_{\lambda}(t+s)\right\|_{L^{p}}\le C_{s}\|u_{0}\|_{L^{1}}t^{-\frac{n}{2}\left(1-\frac{1}{p}\right)-\frac{1}{2}}, \ \ t\in [0, 1], \ s>0, \ \lambda>0. 
\]
Therefore, taking $t=s=\frac{1}{2}$ in the above estimate, we obtain 
\[
\left\|\nabla u_{\lambda}(1)\right\|_{L^{p}}\le C\|u_{0}\|_{L^{1}}, \ \ \lambda>0, \ p\in \left[1, \frac{n}{n-1}\right). 
\]
Now, substituting $\lambda=\sqrt{t}>0$ in \eqref{u-lambda}, we have $\nabla u_{\sqrt{t}}(x, 1)=t^{\frac{n}{2}+\frac{1}{2}}\nabla u\left(\sqrt{t}x, t\right)$. Hence, by using the above estimate with $\lambda=\sqrt{t}>0$, we eventually arrive at 
\begin{equation}\label{dxu-infty-Lp-pre}
t^{\frac{n}{2}\left(1-\frac{1}{p}\right)+\frac{1}{2}}\left\|\nabla u(t)\right\|_{L^{p}}
=\left\|\nabla u_{\sqrt{t}}(1)\right\|\le C\|u_{0}\|_{L^{1}}, \ \ t>0, \ p\in \left[1, \frac{n}{n-1}\right). 
\end{equation}
This result means that the desired estimate \eqref{dxu-infty-Lp} holds for any $p\in \left[1, \frac{n}{n-1}\right)$. 

In order to prove \eqref{dxu-infty-Lp} for any $p\in [1, \infty]$, we shall use an iterative argument. 
In what follows, let us consider the case of $p\ge \frac{n}{n-1}$. Now, it follows from \eqref{dxu-infty-Lp-pre} that 
\begin{equation}\label{dxu-Lp-pre2}
\left\|\nabla u_{\lambda}(\tau+s)\right\|_{L^{p}}\le C_{s}, \ \ \tau\ge0, \ s>0, \ \lambda>0, \ p\in \left[1, \frac{n}{n-1}\right). 
\end{equation}
Now, taking the parameter $r\in \left[1, \frac{n}{n-1}\right)$ and choosing $p$ such that $\frac{n}{2}\left(\frac{1}{r}-\frac{1}{p}\right)+\frac{1}{2}<1$. In this case, we note that $p$ satisfies $p\in \left[\frac{n}{n-1}, \frac{n}{n-2}\right)$. 
Under this situation, in the same way to get \eqref{nu(t+s)-est-pre}, it follows from \eqref{int-nu(t+s)}, \eqref{est-semigroup}, \eqref{Lp-Lq-est-u} and \eqref{dxu-Lp-pre2} that 
\begin{align}
\left\|\nabla u_{\lambda}(t+s)\right\|_{L^{p}}
&\le C\left\|u_{0}\right\|_{L^{1}}t^{-\frac{n}{2}\left(1-\frac{1}{p}\right)-\frac{1}{2}} \nonumber \\
&\ \ \ +C|d|q\lambda^{n(1-q)+1}\int_{0}^{t}(t-\tau)^{-\frac{n}{2}\left(\frac{1}{r}-\frac{1}{p}\right)-\frac{1}{2}}\left\|u_{\lambda}(\tau+s)\right\|_{L^{\infty}}^{q-1}\left\|\nabla u_{\lambda}(\tau+s)\right\|_{L^{r}}d\tau 
\nonumber \\
&\le C\left\|u_{0}\right\|_{L^{1}}t^{-\frac{n}{2}\left(1-\frac{1}{p}\right)-\frac{1}{2}}
+C_{s}|d|q\left\|u_{0}\right\|_{L^{n(q-1)}}^{q-1}\int_{0}^{t}(t-\tau)^{-\frac{n}{2}\left(\frac{1}{r}-\frac{1}{p}\right)-\frac{1}{2}}d\tau \nonumber \\
&\le C\left\{t^{-\frac{n}{2}\left(1-\frac{1}{p}\right)-\frac{1}{2}}
+t^{-\frac{n}{2}\left(\frac{1}{r}-\frac{1}{p}\right)+\frac{1}{2}}
\right\}, \nonumber 
\end{align}
for any $p\in \left[\frac{n}{n-1}, \frac{n}{n-2}\right)$, $t>0$, $s>0$ and $\lambda>0$ with some positive constant $C>0$ depending on $\|u_{0}\|_{L^{1}}$ and $\|u_{0}\|_{L^{n(q-1)}}$. Therefore, taking $t=s=\frac{1}{2}$ in the above estimate, we arrive at 
\[
\left\|\nabla u_{\lambda}(1)\right\|_{L^{p}}\le C, \ \ \lambda>0, \ p\in \left[\frac{n}{n-1}, \frac{n}{n-2}\right). 
\]
In the same way to get \eqref{dxu-infty-Lp-pre}, we eventually see that \eqref{dxu-infty-Lp} holds for any $p\in \left[\frac{n}{n-1}, \frac{n}{n-2}\right)$. 
Finally, iterating this argument, it can be concluded that the desired estimate \eqref{dxu-infty-Lp} is true for any $p\in [1, \infty]$. This completes the proof. 
\end{proof}

Finally, let us consider the asymptotic behavior of $h(x, t)$ in \eqref{DEF-h}. 
We have slightly modified the proof of Proposition~\ref{prop1} and succeeded to derive the following asymptotic formula: 
\begin{prop}\label{prop-new-h}
Let $n=1$ and $ q=3$. Suppose $ u_0 \in L^{1}( \mathbb{R}; 1+|x| )\cap L^{\infty }( \mathbb{R} )$. 
Then, for any $ p \in [1, \infty ] $, the following asymptotic formula holds:
  \begin{equation}\label{h-asymp}
    \left\lVert h(t) - \mathcal{K}(t)(\log t)\p_{x}G(t) - \Phi (t) \right\rVert _{L^p} 
    =\begin{cases}
        O\left(t^{-\frac{1}{2}\left(1-\frac{1}{p}\right)-1}\right), &\displaystyle \delta \ge1,\\[4mm]
        o\left(t^{-\frac{1}{2}\left(1-\frac{1}{p}\right)-\frac{1}{2}}\right), &\displaystyle 0<\delta<1 
    \end{cases}  
    \end{equation}
    as $t\to \infty$, where $ h(x,t) $, $ G(x,t) $, $ \Phi (x,t) $ and $\mathcal{K}(t)$ are defined by \eqref{DEF-h}, \eqref{heat}, \eqref{DEF-Phi} and \eqref{DEF-mathL}, respectively. 
    In addition, $\delta>0$ is the constant appeared in \eqref{C-b}. 
\end{prop}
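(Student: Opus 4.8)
The plan is to mirror the scaling strategy of Proposition~\ref{prop1}, now adapted to the variable-diffusion source $b\p_x u$ and exploiting the built-in cancellation of $\phi$. First I would isolate the logarithmic part. Setting $N(\tau):=\int_\R b(y)\p_x u(y,\tau)\,dy$, the definition \eqref{DEF-phi} gives the splitting $b\,\p_x u(\tau)=\phi(\tau)+N(\tau)G(\tau)$. Since $(\p_x G(t-\tau))*G(\tau)=\p_x G(t)$ by the semigroup property, the second piece contributes exactly $\p_x G(\cdot,t)\int_1^t N(\tau)\,d\tau=\mathcal{K}(t)(\log t)\,\p_x G(\cdot,t)$ by the definition of $\mathcal{K}(t)$ in \eqref{DEF-mathL}. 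Hence
\[
h(x,t)-\mathcal{K}(t)(\log t)\,\p_x G(x,t)=\int_1^t\bigl(\p_x G(t-\tau)*\phi(\tau)\bigr)(x)\,d\tau .
\]

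Next I would run the scaling substitution $\tau=ts$, $y=\sqrt{t}\,z$ exactly as in \eqref{v-V}--\eqref{K-henkei}; this shows that $\int_0^t(\p_x G(t-\tau)*\phi(\tau))(x)\,d\tau$ coincides with $\Phi(x,t)$ as defined in \eqref{DEF-Phi}. Subtracting, the whole problem collapses to the short-time remainder
\[
h(t)-\mathcal{K}(t)(\log t)\,\p_x G(t)-\Phi(t)=-\int_0^1\bigl(\p_x G(t-\tau)*\phi(\tau)\bigr)(\cdot)\,d\tau ,
\]
where now the time integration runs only over $\tau\in[0,1]$, so $t-\tau\approx t$ is large. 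The key structural fact I would use is $\int_\R\phi(y,\tau)\,dy=0$, which is immediate from \eqref{DEF-phi} because $\int_\R G(y,\tau)\,dy=1$. All $\tau$-integrals near $\tau=0$ will be controlled by the uniform gradient estimate of Lemma~\ref{lem.nu-est-Lp}, valid for every $t>0$ and every $p$, which gives in particular $\|\p_x u(\tau)\|_{L^1}\le C\tau^{-1/2}$ and hence $|N(\tau)|\le C\tau^{-1/2}$.

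For $\tau\in[0,1]$ I would insert the cancellation and write $(\p_x G(t-\tau)*\phi(\tau))(x)=\int_\R[\p_x G(x-y,t-\tau)-\p_x G(x,t-\tau)]\phi(y,\tau)\,dy$. When $\delta\ge1$ the weight $|y||b(y)|$ is bounded by \eqref{C-b}, so one application of the mean value theorem produces the factor $\p_x^2 G(\cdot,t-\tau)$ together with the weighted norm $\||\cdot|\phi(\tau)\|_{L^1}$. Using $\|\p_x^2 G(t-\tau)\|_{L^p}\le Ct^{-\frac12(1-\frac1p)-1}$ from \eqref{Gdecay} together with $\int_0^1\||\cdot|\phi(\tau)\|_{L^1}\,d\tau<\infty$ (the $b\,\p_x u$ part being controlled by the bounded weight and $\|\p_x u(\tau)\|_{L^1}$, and the $N(\tau)G(\tau)$ part by $|N(\tau)|$ times the first moment of $G$) yields the rate $O(t^{-\frac12(1-\frac1p)-1})$.

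The main obstacle is the case $0<\delta<1$, where $|y||b(y)|\sim|y|^{1-\delta}$ is unbounded, so the first moment of $\phi$ diverges and the global Taylor argument fails. I would resolve this with a spatial cutoff, splitting the $y$-integral at $|y|=R$ with $R=\sqrt{t}$. On $|y|\le R$ I keep the cancellation/mean-value estimate, which now costs a factor $R^{1-\delta}$ and, after multiplying by $t^{\frac12(1-\frac1p)+\frac12}$, contributes $Ct^{-1/2}R^{1-\delta}=Ct^{-\delta/2}\to0$. On $|y|>R$ I discard the cancellation and use the plain bound $\|\p_x G(t-\tau)\|_{L^p}\le Ct^{-\frac12(1-\frac1p)-\frac12}$ together with the tail $\int_0^1\int_{|y|>R}|\phi(y,\tau)|\,dy\,d\tau$, which tends to $0$ as $R=\sqrt{t}\to\infty$ by the dominated convergence theorem, since $\int_0^1\|\phi(\tau)\|_{L^1}\,d\tau<\infty$. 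Both contributions vanish, giving the required $o(t^{-\frac12(1-\frac1p)-\frac12})$. The delicate point is precisely the balance in this cutoff together with the integrability near $\tau=0$, for which Lemma~\ref{lem.nu-est-Lp} is indispensable.
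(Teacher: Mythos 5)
Your proof is correct and follows the paper's strategy in all essentials: extract the logarithmic term via the splitting $b\,\p_x u=\phi+N(\tau)G$, identify $\Phi$ through the parabolic scaling, reduce everything to the short-time remainder $-\int_0^1\bigl(\p_x G(t-\tau)*\phi(\tau)\bigr)d\tau$, and estimate it using $\int_\R\phi\,dy=0$, the mean value theorem, and Lemma~\ref{lem.nu-est-Lp}. Two execution details differ. First, you obtain $\mathcal{K}(t)(\log t)\p_x G$ directly from the semigroup identity $\p_x G(t-\tau)*G(\tau)=\p_x G(t)$, whereas the paper characterizes $H(x,t)=\mathcal{K}(t)(\log t)\p_x G(x,t)$ as the solution of an inhomogeneous heat equation and invokes Duhamel's principle; these amount to the same identity, and likewise your unscaled remainder over $\tau\in[0,1]$ is exactly the paper's scaled remainder over $s\in[0,1/t]$ in \eqref{mainpart-2}. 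Second, and more substantively, for $0<\delta<1$ the paper cuts the spatial integral at the $\tau$-dependent radius $\sqrt{\tau}\,\varepsilon^{-1/\delta}$ and concludes via $\limsup\le C_\delta\varepsilon$ with $\varepsilon$ arbitrary (the terms $I_1$, $I_2$, $I_3$ in \eqref{I1+I2+I3}), while you cut at the $t$-dependent radius $\sqrt{t}$ and kill the tail by dominated convergence, using $\int_0^1\|\phi(\tau)\|_{L^1}d\tau\le C\int_0^1\tau^{-1/2}d\tau<\infty$. Your variant is cleaner (no $\varepsilon$ bookkeeping) and yields the same conclusion $o\bigl(t^{-\frac12(1-\frac1p)-\frac12}\bigr)$, whereas the paper's version retains explicit algebraic rates for two of the three pieces, which is immaterial since the statement for $0<\delta<1$ is only a little-$o$. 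Your rate $O\bigl(t^{-\frac12(1-\frac1p)-1}\bigr)$ for $\delta\ge1$ and your inner-region bound $Ct^{-\delta/2}$ after normalization both check out against \eqref{final-est} and \eqref{I1-I3-comb}.
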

\begin{proof}
  First, we set $H(x, t)$ as follows: 
  \[
  H(x, t):=\mathcal{K}(t)(\log t)\p_{x}G(x, t)=\left(\int_{1}^{t}\int_{\R}b(y)\p_{x}u(y, \tau)dyd\tau\right)\p_{x}G(x, t). 
  \]
  Then, in the same way to get \eqref{CP-V}, we can see that $H(x, t)$ satisfies 
  \begin{equation*}
    \begin{aligned}
      &\partial _t H - \p_{x}^{2}H = \left(\int_{\R}b(y)\p_{x}u(y, t)dy\right)\p_{x}G(x, t), \ \ x\in \R, \ t>1, \\
      &H(x, 1) = 0, \ \ x\in \R.
    \end{aligned}
  \end{equation*}
Applying the Duhamel principle, $ H(x,t) $ can be transformed as follows: 
    \begin{align*}
      H(x, t) 
      = \int_{1}^{t}\int_{\R}\p_{x}G(x-y, t-\tau)\left(\int_{\R}b(z)\p_{x}u(z, \tau)dz\right)G(y, \tau)dyd\tau. 
    \end{align*}
  Therefore, it follows from \eqref{DEF-h}, \eqref{DEF-phi} and the above expression that 
    \begin{align}
      &h(x, t) - H(x, t) \nonumber \\
      &=\int_{1}^{t}\int_{\R}\p_{x}G(x-y, t-\tau)
      \left\{b(y)\p_{x}u(y, \tau)-\left(\int_{\R}b(z)\p_{x}u(z, \tau)dz\right)G(y, \tau)\right\}dyd\tau \nonumber \\
      &=\p_{x}\int_{1}^{t}\int_{\R}G(x-y, t-\tau)\phi(y, \tau)dyd\tau=:\p_{x}E(x, t). \label{h-H}
         \end{align}
     
  Next, let us transform $ E(x, t) $ in the above equation \eqref{h-H}, by using the scaling argument again. 
  Now, recalling \eqref{scal-2}, it follows from the change of variables that 
  \begin{align}
    E(x, t) &=t^{-\frac{1}{2}}\int_{1}^{t}\int_{\R}G\left(\frac{x-y}{\sqrt{t}}, 1-\frac{\tau}{t}\right)\phi(y, \tau)dyd\tau \nonumber \\
        &= t \int_{\frac{1}{t}}^{1} \int_{\mathbb{R}}G\left(\frac{x}{\sqrt{t}} - z, 1 - s \right) \phi\left(\sqrt{t}z, ts\right)dzds\nonumber \\
    &= t \int_{\frac{1}{t}}^{1} {\left( G(1-s) \ast \phi\left(\sqrt{t}\,\cdot, ts\right) \right) \left( \frac{x}{\sqrt{t}} \right)}ds. \label{E-henkei}
  \end{align}
  Here, similarly as \eqref{Psi-t}, from the definition of $\Phi(x, t)$ in \eqref{DEF-Phi}, we get 
  \begin{align*}
    \Phi(x, t)
    &=t^{-1}\Phi _\ast \left( \frac{x}{\sqrt{t}}, t \right)
    =t^{\frac{1}{2}}\p_{x}\left( \int_{0}^{1} {\left( G(1-s) \ast \phi\left(\sqrt{t}\,\cdot, ts\right)\right) \left( x \right)} \,ds \right)\biggl|_{x=\frac{x}{\sqrt{t}}}  \\
&    =t \,\p_{x}\left(\int_{0}^{1} {\left( G(1-s) \ast \phi\left(\sqrt{t}\,\cdot, ts\right)\right) \left( \frac{x}{\sqrt{t}} \right)}ds\right).
  \end{align*}
  Hence, for any $p\in [1, \infty]$, we have from \eqref{h-H} and \eqref{E-henkei} that 
  \begin{align}
    \left\lVert h(t) - H(t) - \Phi (t) \right\rVert _{L^p} 
    &= t \left\lVert \p_{x}\left(\int_{0}^{\frac{1}{t}} {\left( G(1-s) \ast \phi\left(\sqrt{t}\,\cdot, ts\right)\right) \left( \frac{x }{\sqrt{t}} \right)}ds\right)\right\rVert _{L^{p}_{x}} \nonumber \\
    &= t^{\frac{1}{2}+\frac{1}{2p}} \left\lVert \p_{x}\int_{0}^{\frac{1}{t}} {\left(G(1-s) \ast \phi\left(\sqrt{t}\,\cdot, ts\right)\right)}(x)ds\right\rVert _{L^{p}_{x}}. \label{mainpart-2}
  \end{align}
  
  In what follows, let us evaluate the right hand side of the above \eqref{mainpart-2}. 
  First, we shall consider the case of $\delta\ge1$. In this case, noticing that $|x|b\in L^{\infty}(\R)$ holds from \eqref{C-b}. 
  Recalling the definition of $\phi(y, \tau)$ in \eqref{DEF-phi} and noticing $\int_{\R}\phi(y, \tau)dy=0$ for all $\tau>0$, analogously as \eqref{dag-est}, it follows from \eqref{Gdecay} and \eqref{dxu-infty-Lp} that 
  \begin{align} 
        &\left\| \p_{x}\int_0^{\frac{1}{t}} \left(G(1-s) \ast \phi\left(\sqrt{t}\,\cdot, ts\right)\right)(x)ds \right\|_{L^{p}_{x}} \nonumber \\
        &=\left\| \int_{0}^{\frac{1}{t}} \int_{\R} \left( \int_{0}^{1} \p_{x}^{2} G(x - \theta y, 1 - s) d\theta \right) y\phi\left(\sqrt{t}y, ts\right)dy ds \right\|_{L^{p}_{x}} \nonumber \\
        &\le t^{-\frac{1}{2}}\int_{0}^{\frac{1}{t}} \int_{0}^{1} \left\| \int_{\R} \p_{x}^{2}\left( \frac{1}{\theta}G\left(\frac{x}{\theta} - y, \frac{1 - s}{\theta^{2}}\right) \right) \sqrt{t}y\phi\left(\sqrt{t}y, ts\right)dy \right\|_{L^{p}_{x}}d\theta ds \nonumber \\
        &\le Ct^{-1} \int_{0}^{\frac{1}{t}} \left(\int_{0}^{1} \theta^{-1} \theta^{-2}\theta^{\frac{1}{p}}\theta^{1-\frac{1}{p}+2}d\theta \right)(1-s)^{-\frac{1}{2}\left(1-\frac{1}{p}\right)-1}
        \left\||\cdot|\phi\left(ts\right)\right\|_{L^{1}}ds \nonumber \\
        &\le Ct^{-1}\int_{0}^{\frac{1}{t}}(1-s)^{-\frac{1}{2}\left(1-\frac{1}{p}\right)-1}\left(\left\||\cdot|b\right\|_{L^{\infty}}\|\p_{x}u(ts)\|_{L^{1}}+\|\p_{x}u(ts)\|_{L^{\infty}}\|b\|_{L^{1}}\left\||\cdot|G(ts)\right\|_{L^{1}}\right)ds \nonumber \\
        &\le C\left(\||\cdot|b\|_{L^{\infty}}+\|b\|_{L^{1}}\right)t^{-1}\int_{0}^{\frac{1}{t}}(1-s)^{-\frac{1}{2}\left(1-\frac{1}{p}\right)-1}\left\{(ts)^{-\frac{1}{2}}+(ts)^{-1}(ts)^{\frac{1}{2}}\right\}ds \nonumber \\
        &\le C\left(\||\cdot|b\|_{L^{\infty}}+\|b\|_{L^{1}}\right)t^{-\frac{3}{2}}\int_{0}^{\frac{1}{t}}s^{-\frac{1}{2}}ds 
         \le C\left(\||\cdot|b\|_{L^{\infty}}+\|b\|_{L^{1}}\right)t^{-2}, \ \ t\ge2,  \label{final-est}
    \end{align}
    for any $p \in [1, \infty]$. 
    Summarizing up \eqref{mainpart-2} and \eqref{final-est}, we can eventually conclude that the desired result \eqref{h-asymp} is true for $\delta\ge1$. 
    
    Next, we would like to deal with the case of $0<\delta<1$. In this case, the argument given in \eqref{final-est} needs to be modified, because we cannot guarantee that $|x|b\in L^{\infty}(\R)$ holds. Now, let us take $\varepsilon \in (0, 1)$ and split the integral in \eqref{mainpart-2} as follows:  
    \begin{align}
    &\p_{x}\int_0^{\frac{1}{t}} \left(G(1-s) \ast \phi\left(\sqrt{t}\,\cdot, ts\right)\right)(x)ds \nonumber \\
    &=\int_0^{\frac{1}{t}}\int_{\R}\p_{x}\left\{G(x-y, 1-s)-G(x, 1-s)\right\}\phi\left(\sqrt{t}y, ts\right)dy ds\nonumber  \\
    &=\int_0^{\frac{1}{t}}\int_{|y|\le \sqrt{s} \varepsilon^{-\frac{1}{\delta}} }\p_{x}\left\{G(x-y, 1-s)-G(x, 1-s)\right\}b\left(\sqrt{t}y\right)\p_{x}u\left(\sqrt{t}y, ts\right)dy ds \nonumber \\
    &\ \ \ +\int_0^{\frac{1}{t}}\int_{|y|\ge \sqrt{s} \varepsilon^{-\frac{1}{\delta}} }\p_{x}\left\{G(x-y, 1-s)-G(x, 1-s)\right\}b\left(\sqrt{t}y\right)\p_{x}u\left(\sqrt{t}y, ts\right)dyds\nonumber  \\
    &\ \ \ -\int_0^{\frac{1}{t}}\int_{\R}\p_{x}\left\{G(x-y, 1-s)-G(x, 1-s)\right\}\left(\int_{\R}b(z)\p_{x}u(z, ts)dz\right)G\left(\sqrt{t}y, ts\right)dyds \nonumber \\
    &=\int_0^{\frac{1}{t}}\int_{|y|\le \sqrt{s} \varepsilon^{-\frac{1}{\delta}} }\left(\int_{0}^{1}\p_{x}^{2}G(x-\theta y, 1-s)d\theta\right)(-y)b\left(\sqrt{t}y\right)\p_{x}u\left(\sqrt{t}y, ts\right)dy ds \nonumber \\
        &\ \ \ +\int_0^{\frac{1}{t}}\int_{|y|\ge \sqrt{s} \varepsilon^{-\frac{1}{\delta}} }\p_{x}\left\{G(x-y, 1-s)-G(x, 1-s)\right\}b\left(\sqrt{t}y\right)\p_{x}u\left(\sqrt{t}y, ts\right)dyds \nonumber \\
&\ \ \ -\int_0^{\frac{1}{t}}\int_{\R}\left(\int_{0}^{1}\p_{x}^{2}G(x-\theta y, 1-s)d\theta\right)(-y)\left(\int_{\R}b(z)\p_{x}u(z, ts)dz\right)G\left(\sqrt{t}y, ts\right)dy ds \nonumber  \\
&=:I_{1}(x, t)+I_{2}(x, t)+I_{3}(x, t),  \label{I1+I2+I3}
    \end{align}
    where we used the definition of $\phi(y, \tau)$ in \eqref{DEF-phi} and $\int_{\R}\phi(y, \tau)dy=0$ again. In what follows, we shall evaluate $I_{1}(x, t)$, $I_{2}(x, t)$ and $I_{3}(x, t)$. 
    First, for $I_{1}(x, t)$, using the similar argument to \eqref{final-est} and a direct calculation, it follows from \eqref{C-b} and \eqref{dxu-infty-Lp} that 
    \begin{align}
    \left\|I_{1}(t)\right\|_{L^{p}}
    &\le Ct^{-1}\int_{0}^{\frac{1}{t}}(1-s)^{-\frac{1}{2}\left(1-\frac{1}{p}\right)-1}\left(\int_{|w|\le \sqrt{ts} \varepsilon^{-\frac{1}{\delta}} } |w||b(w)|\left|\p_{x}u(w, ts)\right|dw\right)ds \nonumber \\
    &\le Ct^{-1}\int_{0}^{\frac{1}{t}}\sqrt{ts}\varepsilon^{-\frac{1}{\delta}} \|\p_{x}u(ts)\|_{L^{\infty}}\left(\int_{|w|\le \sqrt{ts} \varepsilon^{-\frac{1}{\delta}} } \left(1+|w|^{2}\right)^{-\frac{\delta}{2}}dw\right) \nonumber  \\
    &\le Ct^{-1}\int_{0}^{\frac{1}{t}}\sqrt{ts}\varepsilon^{-\frac{1}{\delta}} (ts)^{-1}\left(\int_{0}^{\sqrt{ts} \varepsilon^{-\frac{1}{\delta}} } r^{-\delta}dr\right)  
    \le C_{\varepsilon, \delta}\,t^{-1}\int_{0}^{\frac{1}{t}}(ts)^{-\frac{1}{2}}(ts)^{\frac{1-\delta}{2}}ds \nonumber \\
&\le C_{\varepsilon, \delta}\,t^{-1-\frac{\delta}{2}}\int_{0}^{\frac{1}{t}}s^{-\frac{\delta}{2}}ds 
\le C_{\varepsilon, \delta}\,t^{-2}, \ \ t\ge2. \label{I1-est}
    \end{align}
Moreover, by using \eqref{C-b} and \eqref{dxu-infty-Lp} again, we can evaluate $I_{2}(x, t)$ as follows: 
\begin{align}
    \left\|I_{2}(t)\right\|_{L^{p}} 
    &\le \int_{0}^{\frac{1}{t}}\int_{|y|\ge \sqrt{s} \varepsilon^{-\frac{1}{\delta}} }\left(\left\|\p_{x}G(\cdot -y, 1-s)\right\|_{L^{p}}+\left\|\p_{x}G(1-s)\right\|_{L^{p}}\right) \nonumber \\
    &\ \ \ \ \times\left|b\left(\sqrt{t}y\right)\right|\left|\p_{x}u\left(\sqrt{t}y, ts\right)\right|dyds \nonumber \\
    &\le Ct^{-\frac{1}{2}}\int_{0}^{\frac{1}{t}}(1-s)^{-\frac{1}{2}\left(1-\frac{1}{p}\right)-\frac{1}{2}}\left(\int_{|w|\ge \sqrt{ts} \varepsilon^{-\frac{1}{\delta}} }\left|b(w)\right|\left|\p_{x}u(w, ts)\right|dw\right)ds \nonumber \\
    &\le Ct^{-\frac{1}{2}}\int_{0}^{\frac{1}{t}}\int_{|w|\ge \sqrt{ts} \varepsilon^{-\frac{1}{\delta}} }\left(1+|w|^{2}\right)^{-\frac{\delta}{2}}\left|\p_{x}u(w, ts)\right|dwds  \nonumber \\
    &\le Ct^{-\frac{1}{2}}\int_{0}^{\frac{1}{t}}\left(\sup_{|w|\ge \sqrt{ts} \varepsilon^{-\frac{1}{\delta}}}|w|^{-\delta}\right) \|\p_{x}u(ts)\|_{L^{1}} ds 
    \le C\varepsilon t^{-\frac{1}{2}}\int_{0}^{\frac{1}{t}} (ts)^{-\frac{\delta}{2}}(ts)^{-\frac{1}{2}}ds \nonumber \\
    &\le C\varepsilon t^{-1-\frac{\delta}{2}}\int_{0}^{\frac{1}{t}}s^{-\frac{1}{2}-\frac{\delta}{2}}ds
    \le C_{\delta}\varepsilon t^{-\frac{3}{2}}, \ \ t\ge2. \label{I2-est}
\end{align}
Finally, in the same way to get \eqref{final-est}, we note that $I_{3}(x, t)$ can be evaluated as follows: 
\begin{align}
    \left\|I_{3}(t)\right\|_{L^{p}} 
    \le C\|b\|_{L^{1}}t^{-2}, \ \ t\ge2. \label{I3-est}
\end{align}

Summarizing up \eqref{I1-est}, \eqref{I2-est} and \eqref{I3-est}, we eventually obtain the following estimate: 
\begin{equation}\label{I1-I3-comb}
\left\| \p_{x}\int_0^{\frac{1}{t}} \left(G(1-s) \ast \phi\left(\sqrt{t}\,\cdot, ts\right)\right)(x)ds \right\|_{L^{p}_{x}} 
\le C_{\varepsilon, \delta}\,t^{-2}+C_{\delta}\varepsilon t^{-\frac{3}{2}}+C\|b\|_{L^{1}}t^{-2}, \ \ t\ge2,  
\end{equation}
    for any $p \in [1, \infty]$. Combining \eqref{mainpart-2} and \eqref{I1-I3-comb}, we have
\[
 \limsup_{t\to \infty}t^{\frac{1}{2}\left(1-\frac{1}{p}\right)+\frac{1}{2}}\left\lVert h(t) - \mathcal{K}(t)(\log t)\p_{x}G(t) - \Phi (t) \right\rVert _{L^p}  \le C_{\delta}\varepsilon. 
\]
Therefore, we are able to see that the desired formula \eqref{h-asymp} holds for $0<\delta<1$ because $\varepsilon>0$ can be chosen arbitrarily small. 
This completes the proof.  
\end{proof}

\begin{proof}[\rm{\bf{Proof of Theorem~\ref{thm-D2}}}]
Combining Propositions~\ref{prop-new2}, \ref{prop-new-h}, \eqref{DEF-h} and \eqref{K-split}, we can immediately see that the desired asymptotic formula \eqref{D2-asymp} is true. This completes the proof. 
\end{proof}

\begin{proof}[\rm{\bf{End of the proof of Theorem~\ref{mainresult-2}}}]
Applying Proposition~\ref{linereval}, Theorems \ref{duhameleval} and \ref{thm-D2} for \eqref{IE}, 
we can conclude that the desired asymptotic formula \eqref{main-2} holds. This completes the proof. 
\end{proof}

\section*{Acknowledgment}

This study is supported by Grant-in-Aid for Young Scientists Research No.22K13939, Japan Society for the Promotion of Science. 
The authors would like to express their appreciation to Professor Ryosuke Nakasato and Mr.\,Ryunosuke Kusaba for their useful comments and stimulating discussions.
The authors also would like to thank the anonymous referee for their helpful and valuable comments on the paper.



\vskip5pt
\begin{flushleft}
Ikki Fukuda\\
Division of Mathematics and Physics, \\
Faculty of Engineering, Shinshu University\\
4-17-1, Wakasato, Nagano, 380-8553, JAPAN\\
E-mail: i\_fukuda@shinshu-u.ac.jp

\vskip5pt
Shinya Sato\\
Department of Engineering, \\
Graduate School of Science and Technology, Shinshu University\\
4-17-1, Wakasato, Nagano, 380-8553, JAPAN\\
E-mail: sato.shinya@qri.jp
\end{flushleft}

\end{document}